\begin{document}

\newcommand{\mm}[1]{{\color{blue}{#1}}}

\definecolor{pink}{rgb}{1,0.08,0.45}
\newcommand{\refc}[1]{{\color{pink}{#1}}}

\newcommand{\ja}[1]{{\color{blue}{#1}}}
\newcommand{\jb}[1]{{\color{Green}{#1}}}
\newcommand{\jc}[1]{{\color{Orange}{#1}}}
\newcommand{\jd}[1]{{\color{Purple}{#1}}}
\newcommand{\je}[1]{{\color{OliveGreen}{#1}}}

\def\dd{r}
\def\ld{{\widehat L}}
\def\Gcal{\mathcal G}
\def\eps{\epsilon}
\def\eqn{\eqref}
\def\wtau{\widehat \tau}
\def\ttau{\widetilde \tau}
\def\Q{\mathcal Q}
\def\hG{\widehat G}
\def\C{\mathcal C}
\def\ds{{\mathcal D}}
\def\hL{{\widehat L}}
\def\br{br}
\def\M{{\cal M}}
\def\H{{\cal H}}
\def\bell{\ensuremath{\boldsymbol\ell}}
\def\Bin{{\bf Bin}}
\def\hit{{\mathscr H}}
\def\hzeta{{\widehat \zeta}}
\def\htheta{{\widehat \theta}}
\def\hD{{\widehat D}}
\def\hN{{\widehat N}}
\def\hp{{\widehat p}}
\def\htau{{\widehat \tau}}
\def\vs{{\varsigma}}
\def\hH{{\widehat H}}
\def\deltac{{\xi}}
\def\hxi{{\widehat \xi}}

\newcommand{\be}{\begin{equation}}
\newcommand{\ee}{\end{equation}}
\newcommand{\bea}{\begin{eqnarray}}
\newcommand{\eea}{\end{eqnarray}}
\newcommand{\bean}{\begin{eqnarray*}}
\newcommand{\eean}{\end{eqnarray*}}
\newcommand{\non}{\nonumber}
\newcommand{\no}{\noindent}
\newcommand\floor[1]{{\lfloor #1 \rfloor}}
\newcommand\ceil[1]{{\lceil #1 \rceil}}
\newcommand{\jt}[1]{{\color{red} #1 }}
\newcommand{\jn}[1]{{\color{pink}\ #1 }}
\newcommand{\remove}[1]{}
\newcommand{\lab}[1]{\label{#1}\ }

\def\a{\alpha}
\def\b{\beta}
\def\d{\delta}
\def\D{\Delta}
\def\e{\epsilon}
\def\f{\phi}
\def\F{\Phi}
\def\g{\gamma}
\def\G{\Gamma}
\def\k{\eta}
\def\K{\eta}
\def\z{\zeta}
\def\th{\theta}
\def\Th{\Theta}
\def\l{\lambda}
\def\la{\lambda}
\def\La{\Lambda}
\def\m{\mu}
\def\n{\nu}
\def\p{\pi}
\def\P{\Pi}
\def\r{\rho}
\def\R{\Rho}
\def\s{\sigma}
\def\S{\Sigma}
\def\t{\tau}
\def\om{\omega}
\def\Om{\Omega}
\def\smallo{{\rm o}}
\def\bigo{{\rm O}}
\def\to{\rightarrow}
\def\E{{\bf Exp}}
\def\ex{{\mathbb E}}
\def\cd{{\cal D}}
\def\rme{{\rm e}}
\def\hf{{1\over2}}
\def\R{{\bf  R}}
\def\cala{{\cal A}}
\def\cale{{\cal E}}
\def\call{{\cal L}}
\def\cald{{\cal D}}
\def\calz{{\cal Z}}
\def\calf{{\cal F}}
\def\Fscr{{\cal F}}
\def\cc{{\cal C}}
\def\calc{{\cal C}}
\def\calh{{\cal H}}
\def\calk{{\cal K}}
\def\cals{{\cal S}}
\def\calr{{\cal R}}
\def\calt{{\cal T}}
\def\msq{{\mathscr Q}}
\def\bk{\backslash}

\def\heps{{\widehat\epsilon}}

\newcommand{\Exp}{\mbox{\bf Exp}}
\newcommand{\var}{\mbox{\bf Var}}
\newcommand{\pr}{\mbox{\bf Pr}}

\def \S{\mathcal S}
\def \SS{\mathscr C}

\newtheorem{lemma}{Lemma}
\newtheorem{theorem}{Theorem}
\newtheorem{corollary}[lemma]{Corollary}
\newtheorem{claim}[lemma]{Claim}
\newtheorem{remark}[lemma]{Remark}
\newtheorem{proposition}[lemma]{Proposition}
\newtheorem{observation}[lemma]{Observation}
\theoremstyle{definition}
\newtheorem{definition}[lemma]{Definition}

\newcommand{\limninf}{\lim_{n \rightarrow \infty}}
\newcommand{\proofstart}{{\bf Proof\hspace{2em}}}
\newcommand{\tset}{\mbox{$\cal T$}}
\newcommand{\proofend}{\hspace*{\fill}\mbox{$\Box$}}
\newcommand{\bfm}[1]{\mbox{\boldmath $#1$}}
\newcommand{\reals}{\mbox{\bfm{R}}}
\newcommand{\expect}{\mbox{\bf Exp}}
\newcommand{\he}{\hat{\e}}
\newcommand{\card}[1]{\mbox{$|#1|$}}
\newcommand{\rup}[1]{\mbox{$\lceil{ #1}\rceil$}}
\newcommand{\rdn}[1]{\mbox{$\lfloor{ #1}\rfloor$}}
\newcommand{\ov}[1]{\mbox{$\overline{ #1}$}}
\newcommand{\inv}[1]{\frac{1}{#1} }
\newcommand{\imax}{I_{\rm max}}

\newcommand{\whp}{w.h.p.}
\newcommand{\aas}{a.a.s.\ }

\date{\empty}

\title{The stripping process can be slow: part II}
\author{Pu Gao\thanks{Research supported by NSERC. 
This work started when the author was affiliated with University of Toronto and the research was then supported by NSERC PDF.}\\
University of Waterloo\\
p3gao@uwaterloo.ca
}

\maketitle

\begin{abstract}

This paper is a continuation of the previous results on the stripping number of a random uniform hypergraph, and the maximum depth over all non-$k$-core vertices. The previous results focus on the supercritical case, whereas this work analyses these parameters in the subcritical regime and inside the critical window.

\end{abstract}

\section{Introduction}

Given a hypergraph $H$ and a positive integer $k$, the {\em parallel $k$-stripping process} on $H$ is the sequence $H_0, H_1, H_2, \ldots$ such that $H_0=H$ and for every $i\ge 1$, $H_i$ is obtained form $H_{i-1}$ by removing all vertices with degree less than $k$ in $H_{i-1}$ together with their incident hyperedges. The process terminates with the $k$-core of $H$: the maximum subgraph of $H$ with minimum degree at least $k$. Note that the $k$-core of $H$ can be empty. Let $s_k(H)$ denote the number of iterations this process takes and we call $s_k(H)$ the $k$-stripping number of $H$. As $k$ is fixed in this paper, we often drop $k$ from the above notation.

We will study $s(\H_r(n,m))$, where $\H_r(n,m)$ is a uniformly random hypergraph on $n$ vertices and $m$ hyperedges, each of size $r$. The only interesting range of $m$ for this study is $m=\Theta(n)$. For $m$ in other ranges the stripping number can be easily estimated with little effort. We write $m=cn$ throughout this paper, where $c$ is bounded from both above and below by some absolute positive constants.   Another closely related random hypergraph model is $\H_r(n,p)$ where each hyperedge in $\binom{[n]}{r}$ appears independently with probability $p$. By conditioning on the number of hyperedges in $\H_r(n,p)$,   properties holding asymptotically almost surely (a.a.s.) in $\H_r(n,cn)$ usually translate immediately to $\H_r(n,r!c/n^{r-1})$.  

The stripping number of $\H_r(n,cn)$ is known to be small if $c$ is a constant and is not equal to $c_{r,k}$, the $k$-core emergence threshold (given in~\eqn{krthreshold}). It was proved~\cite{amxor} that the stripping number is $O(\log n)$ in this case. For $c>c_{r,k}+\eps$, this bound is proved to be tight~\cite{jmt}. For $c<c_{r,k}-\eps$, an improved upper bound $O(\log\log n)$ is given in~\cite{jmt,g2}. However, the stripping process can get very slow as $c\to c_{r,k}$. It was shown~\cite{gm} that if $c=c_{r,k}+n^{-\d}$ ($0<\d<1/2$) then the stripping number becomes $\Theta(n^{\d/2}\log n)$, whereas if $c=c_{r,k}-n^{-\d}$ then the stripping number is bounded by $\Omega(n^{\d/2})$ from below. The formal statement is as follows.

\begin{theorem}[\cite{gm}]\lab{thm:Stripsuper}
Let  $r,k\geq2, (r,k)\neq (2,2)$ be fixed. For any arbitrarily small $\eps>0$:
\begin{enumerate}
\item[(a)]  if $c\ge c_{r,k}+ n^{-1/2+\eps}$, then \aas $s(\calh_{\dd}(n,cn))=\Theta(\log n/\sqrt{\deltac})$, where $\deltac=|c-c_{r,k}|$.
\item[(b)]  if $|c-c_{\dd,k}|\le n^{-1/2+\eps}$, then \aas
$ s(\calh_{\dd}(n,cn))= \Omega(n^{1/4-\eps/2})$.
\item[(c)] if $c\le c_{r,k}-n^{-1/2+\eps}$, then a.a.s.\ $s(\calh_r(n,cn))=\Omega(1/\sqrt{\deltac})$, where $\deltac=|c-c_{r,k}|$.
\end{enumerate}
\end{theorem}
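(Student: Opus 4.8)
\noindent\emph{Proof sketch.}
The plan is to reduce the parallel $k$-stripping process on $\calh_r(n,cn)$ to the deterministic iteration of a one-dimensional map attached to its local structure, and to read off all three estimates from the behaviour of that map near a saddle-node bifurcation at $c=c_{r,k}$. Write $m=cn$, let $V_t$ and $N_t$ be the vertex set and number of hyperedges after $t$ rounds, and let $V_\infty,N_\infty$ be the final ($k$-core) values. First I would set up the standard fluid approximation: tracking the degree sequence through the rounds via the differential-equation method / an exposure martingale gives that a.a.s.\ $N_t=(1+o(1))\,m\,p_t$ and $|V_t|=(1+o(1))\,n\,\Pr[\mathrm{Po}(rc\,p_t)\ge k]$, where $p_0=1$ and
\[ p_{t+1}=\phi_c(p_t),\qquad \phi_c(p)=\bigl(\Pr[\mathrm{Po}(rc\,p)\ge k-1]\bigr)^{r-1}, \]
with $\phi_c(p)$ the probability that a pendant hyperedge survives one more round in the Galton--Watson tree that is the local weak limit of $\calh_r(n,cn)$. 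Here $\phi_c$ is analytic and strictly increasing in both $p$ and $c$, and $\phi_c(1)<1$, so $p_t$ decreases monotonically to the largest fixed point $p^\ast$ of $\phi_c$, and $p=\phi_c(p)$ is exactly the $k$-core equation defining $c_{r,k}$. The key caveat: the error in $N_t\approx m\,p_t$ grows with $t$ and overtakes $N_t-N_\infty$ once the latter falls below a polynomial ``noise floor'', so the approximation may only be invoked above that floor.

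Second, the bifurcation analysis. Because $(r,k)\ne(2,2)$, at $c=c_{r,k}$ the curve $\phi_{c_{r,k}}$ meets the diagonal tangentially from below at an interior point $p_c$: $\phi_{c_{r,k}}(p_c)=p_c$, $\phi_{c_{r,k}}'(p_c)=1$, $\phi_{c_{r,k}}''(p_c)<0$, $\partial_c\phi_c(p_c)>0$ --- a non-degenerate saddle-node. From the local expansion $\phi_c(p)-p=b\,(c-c_{r,k})-a\,(p-p_c)^2+O(\,\cdot\,)$, with $a,b>0$ and $\delta:=|c-c_{r,k}|$, I would extract: (i) for $c=c_{r,k}+\delta$ there is an attracting fixed point $p^\ast=p_c+\Theta(\sqrt\delta)$ with $\phi_c'(p^\ast)=1-\Theta(\sqrt\delta)$, so after an $O(1/\sqrt\delta)$ transient $N_t-N_\infty\asymp n\,\sqrt\delta\,(1-\Theta(\sqrt\delta))^{\,t}$; (ii) for $c=c_{r,k}-\delta$ there is no fixed point near $p_c$ and $p_t-p_{t+1}\asymp\delta+a\,(p_t-p_c)^2$ across an interval of width $\Theta(\sqrt\delta)$ about $p_c$, so the ``ghost'' estimate (crossing time $\asymp\int(\delta+a\eta^2)^{-1}d\eta\asymp\delta^{-1/2}$) gives that $p_t$ needs $\Theta(1/\sqrt\delta)$ rounds just to cross it, throughout which $p_t=\Theta(1)$; (iii) for \emph{any} $c$ with $\delta\le n^{-1/2+\eps}$, $p_t$ stays at distance $\gtrsim n^{-1/4+\eps/2}$ from $p^\ast$ for all $t\le c''\,n^{1/4-\eps/2}$ (a small constant $c''$), because near $p_c$ the decrement $p_t-p_{t+1}$ is only of order $\max(\delta,(p_t-p_c)^2)$.

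The three cases then follow. For each lower bound it is enough to show a.a.s.\ $N_t>N_\infty$ (so $H_t$ is not yet the $k$-core, hence $s\ge t$) for the claimed range of $t$. Combining the fluid approximation with (i)--(iii): $N_t-N_\infty=\Theta(n)$ throughout the $\Theta(1/\sqrt\delta)$ rounds of part (c); $N_t-N_\infty\gtrsim n^{3/4+\eps/2}$ throughout the $c''n^{1/4-\eps/2}$ rounds of part (b); and $N_t-N_\infty\asymp n\sqrt\delta\,(1-\Theta(\sqrt\delta))^{\,t}$ over $t\le c'\log n/\sqrt\delta$ in part (a). In each regime this exceeds the cumulative concentration error over that many rounds --- for (a), using that old errors are damped geometrically at rate $1-\Theta(\sqrt\delta)$ and hence sum to an $O(\cdot/\sqrt\delta)$-scale quantity; for (b)--(c), simply because only $n^{1/4+o(1)}$ rounds are involved and $c',c''$ may be shrunk as $\eps\to0$ --- yielding $s(\calh_r(n,cn))=\Omega(1/\sqrt\delta)$, $\Omega(n^{1/4-\eps/2})$, $\Omega(\log n/\sqrt\delta)$ respectively. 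For the matching upper bound in part (a), the geometric decay in (i) forces $N_t-N_\infty\le n^{1-\eps'}$ (a small fixed $\eps'>0$) by round $C\log n/\sqrt\delta$ with $C$ large; I would close the gap with an endgame argument --- a first-moment union bound in which a non-core vertex surviving $L$ more rounds is forced to carry a sparse local witness of probability $(1-\Theta(\sqrt\delta))^{\Theta(L)}$ --- showing the leftover $\le n^{1-\eps'}$ non-core vertices are stripped within $O(\log n/\sqrt\delta)$ further rounds, so $s=O(\log n/\sqrt\delta)$.

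I expect essentially all the difficulty to reside in two places. The first is the upper bound in part (a): once $N_t-N_\infty$ is below the noise floor the fluid description is useless, so one needs a genuinely combinatorial argument --- uniform in how small $\delta$ is --- that the $n^{1-\eps'}$ non-core vertices lying just outside the $k$-core are stripped in $O(\log n/\sqrt\delta)$ rounds, together with pushing the martingale concentration accurately down to that point; this is precisely where the $\log n$ (rather than a mere $\log\log n$) factor appears. The second is controlling that same concentration near $c=c_{r,k}$, where $\phi_c$ has derivative essentially $1$ and single-hyperedge cascades are hardest to bound; in part (b) this is what confines the argument to $n^{1/4-\eps/2}$ rounds and forces ``$\Omega$'' rather than ``$\Theta$'' there, even though the true stripping number at exact criticality is presumably larger. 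The bifurcation analysis of the second step is, by contrast, routine calculus and poses no obstacle.
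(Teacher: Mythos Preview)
This theorem is not proved in the present paper; it is quoted from the companion paper~\cite{gm}, and the present paper only \emph{uses} it (together with the auxiliary Lemma~\ref{lem:Si}, also quoted from~\cite{gm}) as input for the subcritical analysis. So there is no proof here to compare against directly. That said, enough of the machinery of~\cite{gm} is reproduced in Sections~\ref{sec:coupling}--\ref{sec:theta} that one can see what the original argument looks like, and your outline takes a recognisably different route.

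The approach of~\cite{gm}, as visible here, does \emph{not} pass through a deterministic one-dimensional recursion for a survival probability $p_t$. Instead it works in the AP-model, runs the auxiliary process SLOW-STRIP, and tracks the pair $(\zeta_t,L_t)$ --- the average degree of heavy vertices and the total degree of light vertices --- step by step. The role of your bifurcation parameter is played by $\theta_t=-1+(r-1)(k-1)\bar p_t$, which by Lemma~\ref{l:monotone} is (up to $O(n^{-1/2}\log n)$) a smooth decreasing function of $\zeta_t$ vanishing at the critical $\zeta$; the drift equation~\eqref{Ldiff} then gives $\ex(L_{t+1}-L_t\mid\calf_t)\approx\theta_t$, and Lemma~\ref{lem:zeta} shows $\zeta_t$ moves linearly in $t$. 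From this one reads off the recursions for $|S_i|$ in Lemma~\ref{lem:Si} directly, and the three parts of the theorem follow by iterating those recursions and applying the Azuma-type Lemma~\ref{l:azuma} at every step. The saddle-node structure you describe is of course present implicitly --- it is exactly why $\theta_t$ is $\Theta(\sqrt{\deltac})$ near the core --- but it is never isolated as a bifurcation of a map; the analysis stays with the random process throughout.

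Your route via $p_{t+1}=\phi_c(p_t)$ is a legitimate alternative and arguably cleaner conceptually: the quadratic tangency at $p_c$ explains the $\sqrt{\deltac}$ scaling in one line, and the ghost-passage integral for part~(c) is very tidy. What it buys is a transparent separation between the deterministic dynamics and the stochastic error; what it costs is that the error control has to be done from scratch, whereas in~\cite{gm} the martingale bounds are built into the SLOW-STRIP bookkeeping. You have correctly located the two genuine difficulties. For the upper bound in~(a), your proposed endgame --- a first-moment bound on surviving non-core vertices via a local witness decaying like $(1-\Theta(\sqrt\delta))^{\Theta(L)}$ --- is in the right spirit but is not how~\cite{gm} closes the argument; there the endgame is handled by continuing to track $L_t$ with Azuma down to $L_t\approx\deltac^{-1}\log^2 n$ (the threshold in Lemma~\ref{lem:Si}), which already gives the $O(\log n/\sqrt{\deltac})$ bound without a separate combinatorial witness. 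For the concentration over $n^{1/4+o(1)}$ rounds near criticality, your sketch is honest that this is delicate, and indeed it is exactly why part~(b) is only a lower bound in~\cite{gm}; the present paper's Theorem~\ref{thm:Stripsub}(b) supplies the (non-matching) upper bound by a quite different coupling argument.
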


One of the main contributions in this paper is to prove that the lower bound in Theorem~\ref{thm:Stripsuper}(c) is almost tight. We will also provide an upper bound for $s(\H_r(n,cn))$ when $c$ is inside the critical window $|c-c_{\dd,k}|\le n^{-1/2+\eps}$.

A $k$-stripping sequence $v_1,v_2,\ldots$ is a sequence of vertices, which can be deleted from the hypergraph in the order of the sequence, such that each vertex has degree less than $k$ at the moment of its removal. 
 If $v$ is a vertex not contained in the $k$-core, then the depth of $v$ is the minimum integer $i$ such that there is a stripping sequence with $v_i=v$. In other words, the depth of $v$ is the minimum number of steps required to remove $v$ from $H$ among all stripping sequences.

For constant $c\neq c_{r,k}$, it is proved~\cite{amxor} that the maximum depth of the non-$k$-core vertices in $\calh_r(n,p=r!c/n^{r-1})$ is bounded by $O(\log n)$ and the same result easily translate to $\calh_r(n,cn)$. For $c=c_{r,k}+n^{-\d}$, the maximum depth is raised to $n^{\Theta(\delta)}$, proved by Molloy and the author~\cite{gm}, as follows.

\begin{theorem}[\cite{gm}]\lab{thm:depthsuper}
Assume $r,k\geq2, (r,k)\neq (2,2)$ are fixed. There are constants $a=a(r,k)$ and $b=b(r,k)$ such that for any $0< \xi_n<1/\log^{7}n$ and $c=c_{r,k}+\xi_n$,  a.a.s.\ the maximum depth of all non-$k$-core vertices of $\calh_{r}(n,cn)$ is between $\xi_n^{-a}$ and $\xi_n^{-b}$.
\end{theorem}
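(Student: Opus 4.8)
\medskip
\noindent\emph{Proof plan.}
The plan is to recast the depth of a vertex as a quantity computed by a local recursion and to analyse that recursion through the near-critical branching process governing the $k$-core just above threshold. The key reformulation is a combinatorial description of depth: for a light vertex $v$, $\mathrm{depth}(v)$ equals $1$ plus the size of the \emph{smallest} vertex set $R$ with $v\notin R$ such that $R$ is itself removable by some stripping sequence and $R$ meets at least $\deg(v)-k+1$ of the hyperedges at $v$; recursively, ``meeting'' a hyperedge $e\ni v$ means removing some $u\in e\setminus\{v\}$ (light in $\calh_r(n,cn)-v$) together with its own such certificate. Hence $v$ is light precisely when such a certificate exists, and $\mathrm{depth}(v)$ measures how large the cheapest one is. Since $\calh_r(n,cn)$, conditioned on its degree sequence (worked with via the configuration / Poisson-cloning model), is locally a Poisson-type random $r$-uniform tree carrying a ``frozen'' sub-branch that encodes the $k$-core, the theorem reduces to estimating the size of the cheapest certificate of such a tree, uniformly over all $n$ roots. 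The parameter $\xi_n$ enters because the $k$-core message-passing operator on the Poisson tree is tangent to the diagonal at $c=c_{r,k}$ (a saddle-node bifurcation), so at $c=c_{r,k}+\xi_n$ its attracting and nearby repelling fixed points lie at mutual distance $\Theta(\sqrt{\xi_n})$ with contraction rate $1-\Theta(\sqrt{\xi_n})$ through the bottleneck; the cheapest-certificate recursion inherits this near-criticality, so ``unfreezing'' a barely-light vertex costs $\mathrm{poly}(1/\xi_n)$ descent levels. The hypothesis $\xi_n<1/\log^7 n$ is used to keep this polynomial-in-$1/\xi_n$ contribution safely above the additive logarithmic error terms.

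\medskip
\noindent\textbf{Upper bound $\xi_n^{-b}$.}
For this direction I would bound, for a fixed root, the probability that its cheapest certificate has size at least $M$, and then union over all $n$ vertices --- so the per-vertex tail must be $o(1/n)$ at $M=\xi_n^{-b}$. The delicate point is that the crude estimate (survival of a near-critical process for $M$ levels) decays only like $\exp(-\Theta(\sqrt{\xi_n}\,M))$, which would reintroduce a $\log n$ factor in the worst-case depth. One has to do better by exploiting that the recursion takes a \emph{sum of the cheapest} options (a minimum over the $r-1$ vertices in each hyperedge and over which $\deg(\cdot)-k+1$ hyperedges to use), so that an atypically large certificate forces many independent ``unlucky'' sub-certificates and the genuine tail is super-polynomial in $M$. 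Together with controlling the conditioning on proximity to the genuine $\Theta(n)$-sized core --- exposed first in the configuration model, the local exploration then run inside the remaining light kernel, whose offspring law is the sub-critical bottleneck law --- this yields $\mathrm{depth}(v)\le\xi_n^{-b}$ for all light $v$, a.a.s.

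\medskip
\noindent\textbf{Lower bound $\xi_n^{-a}$.}
Here it suffices to produce a single light vertex of depth at least $\xi_n^{-a}$. Precisely because $c\to c_{r,k}^+$ forces near-critical local structure, I would show that a ``deep near-core blob'' appears somewhere a.a.s.: a connected light sub-hypergraph in which every vertex has degree at least $k$ except for a bounded number, arranged so that any stripping sequence removing its innermost vertex $v$ must first dismantle the whole blob in a cascade of $\ell=\xi_n^{-a}$ rounds, whence $\mathrm{depth}(v)\ge\ell$. Its existence follows from a first- or second-moment (or exploration) computation whose key input is that each further level of the blob costs only a factor $1-\Theta(\sqrt{\xi_n})$ in probability, so a depth-$\ell$ blob survives with probability large enough to appear among the $n$ candidate locations once $\ell\le\xi_n^{-a}$ for a suitable $a$; a separate structural lemma supplies the claim that the innermost vertex really requires the full cascade.

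\medskip
\noindent\textbf{Main obstacle.}
The crux is the upper bound, specifically the removal of spurious logarithmic factors: the naive near-critical tail bound is too weak for a union bound over $n$ vertices, so one must extract a genuinely lighter tail from the ``minimum over many options'' structure of the depth recursion while keeping the conditioning on the large $k$-core under control. This difficulty, present on both sides, is also presumably why the exponents $a$ and $b$ are left unmatched: the $\Theta(\sqrt{\xi_n})$ criticality scale points to a depth of order $\xi_n^{-\Theta(1)}$ up to factors polylogarithmic in $1/\xi_n$, but converting the heuristic into rigorous two-sided bounds appears to cost further powers of $\xi_n$.
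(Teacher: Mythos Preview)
This theorem is quoted from \cite{gm} and not proved in the present paper, but Section~5 here outlines the \cite{gm} method (as the template for the subcritical analogue), so I compare against that.

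Your lower-bound plan is far more elaborate than what is actually used. In \cite{gm} (and in Section~5 here) the lower bound is a one-line consequence of the stripping number: any vertex $v\in S_{\imax-1}$ has depth at least $\imax-1$, since every stripping sequence ending at $v$ must meet each layer $S_0,\ldots,S_{\imax-1}$. By Theorem~\ref{thm:Stripsuper}(a), a.a.s.\ $\imax=\Theta(\xi_n^{-1/2}\log n)\ge\xi_n^{-1/2}$, so one may take $a=1/2$. No blob construction or moment computation is needed; your proposal is in effect reproving the stripping-number lower bound inside the depth argument.

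Your upper-bound route is genuinely different from \cite{gm} and, as written, has a gap. The \cite{gm} argument is global, not local: an EXPOSURE step first reveals the layer sets $S_i$ together with certain degree statistics, one conditions on these being typical (their Lemma~49, partly restated here as Lemma~\ref{lem:Si}), and then EDGE-SELECTION samples the remaining hyperedge structure with the residual randomness. Conditional on a typical exposure, one bounds not $\mathrm{depth}(v)$ but the larger reachable set $R^+(v)$ in the stripping digraph, via the layered sets $R_j(v)$ of Definition~\ref{def:Rj}; the key recurrence (Lemma~\ref{lrec}) controls $D^-(R_j)$ by $(|S_j|/n)\sum_{\ell>j}D^-(R_\ell)$ plus a polylog term, and solving it against the known profile of $|S_j|$ yields $|R^+(v)|=\xi_n^{-O(1)}$ simultaneously for all $v$. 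There is no per-vertex tail bound followed by a union over $n$ roots; rather, a.a.s.\ the exposed parameters are nice, and conditionally the recurrence holds for every $v$ at once. Your concern about excising logarithmic factors is therefore beside the point --- and in any case the hypothesis $\xi_n<1/\log^7 n$ gives $\log n\le\xi_n^{-1/7}$, so such factors are absorbed into $b$. The substantive gap in your plan is that the minimal certificate for $v$ can have polynomial size, hence is not contained in any $O(\log n)$-neighbourhood of $v$, and the Poisson-tree coupling is unavailable at that scale; the claim that the ``minimum over many options'' structure upgrades the near-critical tail $\exp(-\Theta(\sqrt{\xi_n}\,M))$ to something usable under a union bound is precisely the hard step and is asserted rather than argued.
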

 In this paper, we will prove that the same statement holds for $c=c_{r,k}-\xi_n$.

Before proceeding to the statements of our main results, we briefly discuss the motivation of studying these two parameters, which has been addressed in~\cite{gm}. Several applications of the parallel stripping process was given in~\cite{jmt}, such as parity-check codes and hash-based sketches. However, a major motivation for studying the particular critical case where $c\to c_{r,k}$ is to investigate the solution clustering in random XORSAT. Research on the solution space of many random constraint satisfaction problems (CSPs) was started in statistical physics, and has received great attention in recent years in many broad areas such as physics, computer science, and combinatorics. Due to the discovery of the physicists, the solution space of a random CSP instance undergoes several phase transitions before its density reaches the satisfiability threshold. These phase transitions include clustering, variable freezing and condensation. See~\cite{gmarxiv} for a brief introduction, and the references in~\cite{gmarxiv} for the literature in this blossoming area. Understanding these phase transitions and the geometric properties of the solution space in each phase has been crucial in several recent achievements in the study of random CSPs, including solving the famous $k$-SAT conjecture (for large $k$)~\cite{DSS}. Clustering of random $r$-XORSAT was analysed independently in~\cite{CDMM} and~\cite{MRZ}, with some key arguments missing. The rigorous arguments determining the clustering threshold for random $r$-XORSAT were given independently in~\cite{amxor,ikkm}. Random $r$-XORSAT clustering coincides with the appearance of a non-empty 2-core of a random $r$-uniform hypergraph ($r\ge 3$). To bound the connectivity parameter of each cluster, the key arguments in~\cite{amxor} are to link this parameter to the maximum depth of all non-2-core vertices. For constant $c\neq c_{r,2}$, this parameter is bounded by $O(\log n)$~\cite{amxor}.  In order to charactersie how XORSAT-clusters are born, and how the cluster connectivity parameter transits around clustering, we need to estimate the maximum depth of the non-2-core vertices of $\H_r(n,cn)$ for $c\to c_{r,2}$, especially for $c=c_{r,2}+n^{-\d}$ and $c=c_{r,2}-n^{-\d}$ for some sufficiently small $\delta>0$. The birth of XORSAT-clusters will be studied in a following paper, whereas a preliminary version has been available in~\cite{gmarxiv} (for $c=c_{r,2}+n^{-\d}$).

All asymptotics in this paper refers to $n\to\infty$. For two sequences of real numbers $(f_n)$ and $(g_n)$, we say $f_n=O(g_n)$ if there is a constant $C>0$ such that $|f_n|\le C|g_n|$ for every $n\ge 1$. We write $f_n=o(g_n)$ if $\lim_{n\to\infty} f_n/g_n=0$;  $f_n=\Omega(g_n)$ if $f_n>0$ and $g_n=O(f_n)$. We use $f_n=\Theta(g_n)$ if $f_n>0$, $f_n=O(g_n)$  and $g_n=O(f_n)$.

\section{Main results}

The $k$-core emergence threshold was pursued by several authors~\cite{Chvatal,Luczak,Luczak2} before its determination, and was first determined by Pittel, Spencer and Wormald~\cite{psw} for random graphs ${\mathcal G}(n,m)$. This threshold was further determined in other random graph models and random hypergraphs~\cite{molloy,jhk,JL,cc}.  Recall that $c_{r,k}$ denotes the $k$-core emergence threshold of $\H_r(n,cn)$; then

\begin{equation}\lab{krthreshold}
c_{r,k}=\inf_{\mu > 0}
 \frac{\mu}{r\left[e^{-\mu}\sum_{i = {k-1}}^{\infty} \mu^i/i!\right]^{r-1}}
 \enspace.
 \end{equation}
Our main result of the stripping number of $\calh_{r}(n,cn)$ is the following. 
\begin{theorem}\lab{thm:Stripsub}
Let $r,k\geq2, (r,k)\neq (2,2)$ be fixed integers and $\eps>0$ be a constant.
\begin{enumerate}
\item[(a)]  If $c\le c_{r,k}- n^{-1/2+\eps}$, then \aas $s(\calh_{\dd}(n,cn))= O(\deltac^{-1/2}\log(1/\deltac)+\log\log n)$, where $\deltac=|c_{r,k}-c|$. \item[(b)]  If $|c-c_{\dd,k}|\le n^{-1/2+\eps}$, then \aas
$ s(\calh_{\dd}(n,cn))= O(n^{3/4+\eps})$.
\end{enumerate}
\end{theorem}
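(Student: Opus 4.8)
The plan is to analyse the parallel $k$-stripping process on $\calh_r(n,cn)$ in the subcritical and critical-window regimes by tracking how the vertex degree profile evolves under stripping, using a fluid-limit / differential-equation heuristic made rigorous by concentration bounds. The starting point is the standard recursion: if $p_i$ denotes (roughly) the probability that a uniformly random vertex survives $i$ rounds, then one step of stripping replaces a ``branching parameter'' $\mu$ by $f(\mu)$ for an explicit $f$ (built from the Poisson tail $e^{-\mu}\sum_{j\ge k-1}\mu^j/j!$ as in~\eqn{krthreshold}), and the number of rounds is essentially the number of iterations of $f$ needed to drive $\mu$ below the threshold where the remaining hypergraph is $k$-core-free and can be finished off in $O(\log\log n)$ further rounds (as in~\cite{jmt,g2}). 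For part~(a), when $c\le c_{r,k}-\deltac$ with $\deltac\ge n^{-1/2+\eps}$, the map $f$ has no fixed point above $0$ but has a ``near-bottleneck'': the iteration passes through a region where $f(\mu)-\mu$ is of order $-\deltac$ (this is where the $\deltac^{-1/2}$ comes from, exactly as in the lower bound of Theorem~\ref{thm:Stripsuper}(c), since $c_{r,k}$ is a point where a square-root singularity appears). First I would localize this slow region, show that the number of iterations to cross it is $O(\deltac^{-1/2}\log(1/\deltac))$ — the extra $\log(1/\deltac)$ factor accounting for the geometric approach into and exit out of the bottleneck — and then add the $O(\log\log n)$ term for the final clean-up phase once the surviving hypergraph is safely subcritical. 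The concentration needed to convert the deterministic iteration into an a.a.s.\ statement about $\calh_r(n,cn)$ holds as long as the relevant vertex sets stay polynomially large, which is exactly why we need $\deltac\ge n^{-1/2+\eps}$: below that, the ``fluid'' sets shrink to $n^{o(1)}$ and the approximation breaks.

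For part~(b), inside the critical window $|c-c_{r,k}|\le n^{-1/2+\eps}$, the bottleneck is genuinely at scale $n^{-1/2}$, so naively the crossing would take $\sim n^{1/4}$ rounds — but the fluid approximation is no longer valid there because the surviving structure has size $n^{1/2+o(1)}$ and fluctuations dominate. The plan here is different: run the process with the fluid approximation only until the surviving hypergraph has $m' = n^{1/2+O(\eps)}$ hyperedges (this takes $O(n^{1/4})$ rounds, comfortably within the claimed $O(n^{3/4+\eps})$), and then bound the number of remaining rounds crudely by the number of vertices/hyperedges still present: each round of parallel stripping removes at least one vertex unless the process has terminated, so trivially $s \le n$, but more carefully one argues that on a hypergraph with $m'$ hyperedges the stripping number is $O(m')$ — in fact $O((m')^{1/2+o(1)})$ or even $O(\log m' \cdot (\text{something}))$ depending on the structure, but $O(m')$ suffices here. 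Combining $O(n^{1/4}) + O(n^{1/2+O(\eps)}) = O(n^{3/4+\eps})$ would close part~(b); the slack between $n^{1/2}$ and $n^{3/4}$ is deliberate and lets us be wasteful in the tail phase.

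The key technical inputs I would want are: (i) a precise description of the iteration map $f$ and its behaviour near $c_{r,k}$, including the square-root scaling of $f(\mu)-\mu$ near the critical $\mu$; (ii) a two-sided concentration lemma (martingale/Azuma, or the small-subgraph conditioning / switching arguments typical in this literature) showing that as long as the surviving hypergraph has $\ge n^{\Omega(1)}$ hyperedges, the empirical degree sequence after each stripping round matches the $f$-prediction up to $n^{-1/2+o(1)}$ relative error; and (iii) the already-cited $O(\log\log n)$ bound for the genuinely subcritical tail, which I would invoke as a black box once the branching parameter is bounded away from $c_{r,k}$ by a constant.

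The main obstacle I anticipate is controlling the accumulation of error across $\Omega(\deltac^{-1/2})$ stripping rounds in part~(a): each round introduces a multiplicative or additive error, and near the bottleneck the map $f$ is close to the identity, so it neither contracts nor expands errors — they can pile up linearly in the number of rounds, which is exactly the regime we care about. Making this rigorous requires a careful potential-function argument (tracking something like $\mu_i - \mu^*$ against the deterministic trajectory and showing the stochastic error stays a lower-order term throughout the crossing), and getting the $\log(1/\deltac)$ factor rather than a worse polylog will require being somewhat precise about the entry and exit behaviour around the near-fixed-point.
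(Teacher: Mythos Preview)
For part~(a) your outline is plausible but the paper takes a different route. Rather than iterate a branching-parameter map $f$ directly and fight the error accumulation you flag, the paper \emph{couples} $H\in AP_r(n,cn)$ with a supercritical $H'\in AP_r(n,c'n)$ where $c'=c_{r,k}+\Theta(\deltac)$, and strips $H'$ (invoking the bounds of~\cite{gm} as a black box) until $|S'_i|\le n\deltac'$; this costs $O(\deltac^{-1/2}\log(1/\deltac))$ rounds and delivers a configuration $G_0$ already sitting at the bottleneck. From $G_0$ the paper tracks $(\theta_t,L_t)$ in SLOW-STRIP rather than a single branching parameter: $\theta_t=-1+(r-1)(k-1)\bar p_t$ starts at $-\Theta(\sqrt{\deltac})$, rises linearly in $t/n$ via an auxiliary process $(\M_t)$, and turns positive after $\Theta(n\sqrt{\deltac})$ SLOW-STRIP steps, i.e.\ $O(\deltac^{-1/2})$ parallel rounds since $L_t=\Theta(n\deltac)$ throughout the crossing. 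The $\log(1/\deltac)$ factor then appears only in the exit phase, where $L_{t(i)}$ grows with ratio $1+\Theta(\sqrt{\deltac})$ per round from $n\deltac$ up to $\Theta(n)$. The coupling buys the approach phase for free and sidesteps the near-identity-map error issue entirely; your direct route would have to confront it head-on.

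For part~(b) there is a genuine gap: inside the critical window the surviving hypergraph does \emph{not} shrink to $m'=n^{1/2+O(\eps)}$ hyperedges. It stays at $(\alpha+o(1))n$ vertices throughout the slow phase, and with positive probability terminates at a nonempty $k$-core of that linear size (the remark after the theorem statement flags exactly this uncertainty). What becomes small is $|S_i|$, the number of vertices removed \emph{per round} --- which is why the process is slow, not a reason it finishes quickly. The paper's argument is instead a dichotomy: couple with $c'=c_{r,k}+n^{-1/2+2\eps}$, reach $G_0$, and run SLOW-STRIP for $t_1=Kn^{3/4+\eps}$ steps; then a.a.s.\ either the process has already terminated (possibly at a nonempty core), or $L_t$ has reached $\Omega(n^{1/2+2\eps})$, after which $O(n^{1/2})$ further parallel rounds suffice. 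The $n^{3/4+\eps}$ bound on parallel rounds through the bottleneck is obtained by the crude inequality that the number of parallel rounds is at most the number of SLOW-STRIP steps --- this is why the upper bound does not meet the $n^{1/4-\eps/2}$ lower bound of Theorem~\ref{thm:Stripsuper}(b).
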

\no {\bf Remark}. Note that if $c_{r,k}-c$ is bounded below by a positive constant, then the upper bound becomes $O(\log\log n)$, which agrees with the upper bound in~\cite{jmt,g2}. If $c\le c_{r,k}-n^{-1/2+\eps}$, then the upper bound differs from the lower bound (c.f.~Theorem~\ref{thm:Stripsuper}(c)) by at most a constant factor of $\log n$.  For $\deltac=O(n^{-1/2+\eps})$, the upper bound does not match the existing lower bound (c.f.~Theorem~\ref{thm:Stripsuper}(b)).  The difficulty of obtaining tight bounds inside the critical window lies in the uncertainty of the existence of a non-empty $k$-core.  \smallskip

The next theorem bounds the maximum depth of the non-$k$-core vertices in $H_r(n,cn)$ for $c< c_{r,k}$.

\begin{theorem}\lab{thm:depthsub}
Let $r,k\geq2, (r,k)\neq (2,2)$ be fixed integers. There exist two constants $a=a(k,r)>0$ and $b=b(k,r)>0$ such that for any $0< \xi_n<1/\log^{7}n$ and $c=c_{r,k}-\xi_n$,  a.a.s.\ the maximum depth of the non-$k$-core vertices in $\calh_r(n,cn)$ is between $\xi_n^{-a}$ and $\xi_n^{-b}$.
\end{theorem}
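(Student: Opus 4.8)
The plan is to mirror the two-sided analysis of Theorem~\ref{thm:depthsuper} from~\cite{gm}, but carried out in the subcritical regime $c = c_{r,k} - \xi_n$. The natural object is the local weak limit of $\calh_r(n,cn)$: a suitable multi-type Galton--Watson branching process whose offspring law is governed by the fixed point equation implicit in~\eqn{krthreshold}. In the subcritical regime the branching operator that iterates the ``survival'' probabilities (the map whose least fixed point controls whether a vertex lies in the $k$-core) has no positive fixed point, so iterating it from the boundary condition $1$ converges to $0$; the whole point is to quantify the speed of this convergence as a function of $\xi_n$. Writing $F$ for this operator and $x_0=1$, $x_{t+1}=F(x_t)$, one finds $x_t \to 0$, and because we are $\xi_n$ below the threshold the derivative of the relevant scalar reduction of $F$ at its (would-be) tangency point is $1 - \Theta(\sqrt{\xi_n})$ rather than exactly $1$. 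Consequently the ``slow zone'' near the old fixed point is traversed in $\Theta(\xi_n^{-1/2}\log(1/\xi_n))$ steps — polynomially many in $\xi_n$ but only of order a stripping-number-type quantity — whereas the \emph{depth} counts something subtly different, and the exponents $a,b$ will come out of comparing depth-generating recursions on the tree.

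First I would set up the tree/branching-process model precisely and define, for a vertex $v$ of $\calh_r(n,cn)$, the depth $D(v)$ in terms of the associated rooted exploration; the key point (as in~\cite{gm}) is that depth on the random hypergraph is, up to $o(1)$ coupling error within a neighbourhood of radius $\mathrm{polylog}(n)$, the same as depth on the branching-process tree truncated at that radius. The coupling is legitimate here because $\xi_n < 1/\log^7 n$ is small enough that the relevant neighbourhoods are of size $n^{o(1)}$ and the tree approximation holds \aas\ uniformly over all roots. Then the \textbf{lower bound} ($\ge \xi_n^{-a}$): I would exhibit, with probability bounded away from $0$ in the tree (hence \aas\ somewhere among the $n$ roots, by a second-moment / near-independence argument over $\Theta(n)$ well-separated vertices), a subtree forcing a long stripping chain — concretely, a path of length $\Theta(\xi_n^{-a})$ of vertices each of which, in the tree, acquires degree $<k$ only after its children have been stripped, so that removing the path-end requires iterating up the path. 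Quantitatively this reduces to: the linearised stripping recursion near the critical fixed point contracts by a factor $1-\Theta(\sqrt{\xi_n})$ per level, so to drive the ``local load'' from a constant down to the level at which a vertex becomes strippable takes $\Omega(\xi_n^{-1/2})$ levels, and chaining these gives the claimed polynomial lower bound with $a$ absorbing the constants. For the \textbf{upper bound} ($\le \xi_n^{-b}$): I would show every non-$k$-core vertex is stripped within $\xi_n^{-b}$ rounds by controlling, via the branching-process recursion, the probability that a vertex survives $t$ rounds of stripping; since the iteration $x_{t+1}=F(x_t)$ started from $1$ reaches $o(1/n)$ after $O(\xi_n^{-1/2}\log(1/\xi_n)) \le \xi_n^{-b}$ steps (using $\xi_n<1/\log^7 n$ to absorb the $\log(1/\xi_n)$ and, if necessary, an additive $\log\log n$ term into $\xi_n^{-b}$), a union bound over all $n$ vertices finishes it, modulo the same $n^{o(1)}$-radius coupling.

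The main obstacle — and where I expect the real work to be — is the \emph{lower bound} inside the tree: one must produce a robust ``gadget'' whose presence forces depth $\Omega(\xi_n^{-a})$ and which occurs with probability $\ge n^{-1+\Omega(1)}$ so that it appears \aas\ among $\Theta(n)$ roughly independent locations, and one must show that the forced stripping chain in the gadget cannot be short-circuited by an alternative stripping sequence (this is exactly the difference between depth and stripping number, and is the delicate point already flagged in~\cite{gm}). Establishing that no alternative ordering removes the target vertex faster requires a monotonicity/flow argument on the tree — showing the relevant minimum over stripping sequences is still $\Omega(\xi_n^{-a})$ — and making the gadget probability large enough typically forces $a$ to be a largish constant, strictly bigger than the $1/2$ one might naively hope for, which is presumably why the theorem only claims existence of $a,b$ rather than matching exponents. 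A secondary technical point is justifying the tree coupling uniformly over all $n$ roots at radius $\mathrm{polylog}(n)$ in the near-critical regime; this is standard but needs the hypothesis $\xi_n<1/\log^7 n$ to keep neighbourhood sizes subpolynomial, and I would cite the corresponding lemmas from~\cite{gm} rather than redo them.
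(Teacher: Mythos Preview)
Your proposal conflates two different quantities, and this leads to a genuine gap in the upper bound and a needlessly elaborate argument for the lower bound.

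\textbf{Lower bound.} You propose building a branching-process gadget and arguing via second moments that some vertex has depth $\Omega(\xi_n^{-a})$, worrying in particular that ``no alternative ordering removes the target vertex faster''. None of this is needed. The paper's lower bound is one sentence: if $v\in S_{\imax-1}$ (the last stripping set), then any stripping sequence $v_1,\ldots,v_t=v$ satisfies $v_j\in S_0\cup\cdots\cup S_{j-1}$ for every $j$ (easy induction using monotonicity of the stripping condition), so $t\ge \imax$. Hence the maximum depth is at least the stripping number, and Theorem~\ref{thm:Stripsuper}(c) already gives $s(\calh_r(n,cn))=\Omega(\xi_n^{-1/2})$. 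No gadget, no second moment, no tree coupling.

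\textbf{Upper bound.} Here you have a real error. You write that you will ``show every non-$k$-core vertex is stripped within $\xi_n^{-b}$ rounds'' by tracking the survival probability under the iteration $x_{t+1}=F(x_t)$. But that only bounds the \emph{stripping number} --- the round $i$ such that $v\in S_i$ --- and depth is \emph{not} bounded by the round number. Depth of $v$ is the minimum \emph{length} of a stripping sequence ending at $v$, i.e.\ the minimum number of vertices one must peel off before $v$ can go; for $v\in S_i$ this length can vastly exceed $i+1$, because reducing $v$'s degree below $k$ may require removing many neighbours, each of which in turn requires removing many of its neighbours, and so on. The inequality runs only one way: depth $\ge i+1$ (used above for the lower bound), not the reverse. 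So your iteration argument, even if made rigorous, proves the wrong thing.

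What the paper actually does for the upper bound is bound the size of the set $R^+_\Psi(v)$ of vertices reachable from $v$ in the digraph $\cald(\Psi)$ built from the SLOW-STRIP sequence $\Psi$; this set is itself a stripping sequence ending at $v$, so $|R^+_\Psi(v)|$ upper-bounds the depth. Controlling $|R^+_\Psi(v)|$ requires exposing just enough of the random hypergraph to know the sizes $|S_j|$ and certain degree statistics (the EXPOSURE/EDGE-SELECTION framework from~\cite{gm}), then running a recursion of the form $D^-(R_j)\le D^-(R_{j+1})+Z\frac{|S_j|}{n}\sum_{\ell>j}D^-(R_\ell)+\mathrm{polylog}\,n$ backwards from the last round. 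The solution of this recursion is governed by $\prod_j(1+\Theta(\sqrt{|S_j|/n}))$, and the subcritical evolution of $|S_j|$ (Lemma~\ref{lsisub}: decreasing, then a plateau of $O(\xi_n^{-1/2})$ rounds near $n\xi_n$, then increasing) keeps this product at $n^{O(\delta)}=\xi_n^{-O(1)}$. Your branching-process picture could in principle be adapted to estimate the \emph{size} of the minimal peeling subtree rather than its \emph{height}, but that is a different calculation from the one you describe, and carrying it out essentially reproduces the recursion the paper solves.
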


\no {\bf Remark}. If we write $\xi_n=n^{-\delta}$, Theorem~\ref{thm:depthsub} states that the maximum depth of the non-$k$-core vertices is $n^{\Theta(\d)}$. Since the depth of each non-$k$-core vertex is bounded trivially by $n$, the upper bounds in Theorems~\ref{thm:depthsub} and~\ref{thm:depthsuper} are non-trivial only for $\xi=n^{-\d}$ where $\d>0$ is sufficiently small. Same as in~\cite{gm}, the condition $\deltac_n<1/\log^7 n$ can possibly be weakened and we did not try to optimise the power of the logarithm. This is because the most interesting applications of this theorem, e.g.\ XORSAT clustering, are for $c=c_{r,k}-n^{-\d}$ with some small constant $\d>0$.\smallskip

Our analysis focuses mainly on iterations of the parallel stripping process where the number of vertices is very close to some critical value. To describe this, we start by defining
\begin{eqnarray*}
f_t(\la)&=&e^{-\la}\sum_{i\geq t}\frac{\la^i}{i!};\\
h(\mu)=h_{\dd,k}(\mu)&=&\frac{\mu}{f_k(\mu)^{\dd-1}}.
\end{eqnarray*}
Note that $f_t(\la)$ is the probability that a Poisson variable with mean $\la$ is at least $t$.
 Now for any $\dd,k\geq2, (\dd,k)\neq (2,2)$, we define
$\mu_{\dd,k}$ to be the value of $\mu$ that minimizes $h(\mu)$; i.e. the (unique) solution to:
\begin{equation}
c_{\dd,k}=h(\mu_{\dd,k})/r.\lab{murk}
\end{equation}
Define
\begin{eqnarray}
\a=\a_{\dd,k}&=&f_k(\mu_{\dd,k})\lab{alpha}\\
\b=\b_{\dd,k}&=&\frac{1}{\dd}\mu_{\dd,k} f_{k-1}(\mu_{\dd,k}). \lab{beta}
\end{eqnarray}

 For ease of notation, we drop most of the $\dd,k$ subscripts.
For any $c\ge c_{r,k}$, we define $\mu(c)$ to be the larger solution to
\[c= h(\mu)/r.\] Then, $\mu_{\dd,k}=\mu(c_{\dd,k})$.
Define
\begin{eqnarray*}
\a(c)&=&f_k(\mu(c)),\quad \b(c)=\frac{1}{r}\mu(c) f_{k-1}(\mu(c)).
\end{eqnarray*}

Let $\C_k(H)$ denote the $k$-core of $H$. The following result on the $k$-core emergence threshold can be easily deduced from~\cite{jhk} (see the discussion above~\cite[Lemma 7]{gm}).

\begin{theorem}\lab{tkim} Let $\dd,k\geq2, (\dd,k)\neq (2,2)$ be fixed and $\e>0$ be an arbitrary constant.
\begin{enumerate}
\item[(a)] If $c\le c_{\dd,k}-n^{-1/2+\eps}$, then \aas $\C_k(\calh_{\dd}(n,cn))$ is empty.
\item[(b)] If $c\ge c_{\dd,k}+n^{-1/2+\eps}$, then \aas $\C_k(\calh_{\dd}(n,cn))$ has $\a(c)n+O(n^{3/4})$ vertices and $\b(c)n+O(n^{3/4})$ hyperedges.
\end{enumerate}
\end{theorem}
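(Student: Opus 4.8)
The plan is to deduce Theorem~\ref{tkim} from the sharp analysis of the $k$-core of sparse random (hyper)graphs, essentially in the way indicated above~\cite[Lemma~7]{gm} and using the results of~\cite{jhk}; what follows describes the ingredients. The conceptual starting point is the branching-process picture of the stripping process in the limit $n\to\infty$. In $\calh_\dd(n,cn)$ the degree of a typical vertex is asymptotically Poisson with mean $\dd c$, and running the stripping process to termination is governed by the decreasing iteration $b\mapsto f_k(\dd c\, b^{\dd-1})$, where $b$ is the probability that a vertex reached along a tree hyperedge survives when that hyperedge is ignored; started from $b=1$ this iteration converges to the largest fixed point. Writing $\mu=\dd c\, b^{\dd-1}$ for the limiting expected number of surviving hyperedges at a vertex, the fixed-point equation is exactly $c=h(\mu)/\dd$, with the relevant solution the larger root $\mu(c)$ (the smaller root is an unstable fixed point, not reached from $b=1$). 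Since a vertex lies in the $k$-core iff it retains at least $k$ surviving hyperedges, and the number of these is asymptotically $\mathrm{Poisson}(\mu(c))$, the core occupies an $f_k(\mu(c))=\a(c)$ fraction of the vertices; counting pairs (core vertex, surviving incident hyperedge) then gives hyperedge density $\tfrac1\dd\mu(c)f_{k-1}(\mu(c))=\b(c)$. For $c<c_{\dd,k}$ the equation $c=h(\mu)/\dd$ has no positive solution, as $c_{\dd,k}=\min_\mu h(\mu)/\dd$, so $b=\mu=0$ and the limiting core is empty. These limiting densities are exactly the quantities in the statement; what remains is to turn the heuristic into quantitative a.a.s.\ statements with the claimed error terms.

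For part~(b): the sharp $k$-core analysis of~\cite{jhk} determines $|V(\C_k(\calh_\dd(n,cn)))|$ and $|E(\C_k(\calh_\dd(n,cn)))|$ up to an error term which, in the supercritical regime, is a fixed power of $n^{1/2}$ times a negative power of $\delta:=c-c_{\dd,k}$. The point is that since $\mu(c)-\mu_{\dd,k}=\Theta(\sqrt\delta)$, the sensitivities $\a'(c)$ and $\b'(c)$ are $\Theta(\delta^{-1/2})$, so an $O(\sqrt n)$-scale fluctuation (e.g.\ in an empirical edge count) is amplified by at most $O(\delta^{-1/2})=o(n^{1/4})$ over the range $\delta\ge n^{-1/2+\eps}$, giving an overall error of $o(n^{3/4})$; tracking the exponents through~\cite{jhk} confirms this. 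If one prefers to quote~\cite{jhk} for the $\calh_\dd(n,p)$ model, the passage to $\calh_\dd(n,cn)$ is by monotonicity of the $k$-core in the edge set together with the standard sandwich of $\calh_\dd(n,cn)$ between $\calh_\dd(n,p_-)$ and $\calh_\dd(n,p_+)$ with $p_\pm$ chosen to match edge counts $cn\pm n^{1/2}\log n$; the same amplification estimate bounds $|\a(c_+)-\a(c_-)|\,n$ and $|\b(c_+)-\b(c_-)|\,n$ by $o(n^{3/4})$.

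For part~(a): by monotonicity of the $k$-core in the edge set it suffices to treat the extreme value $c=c_{\dd,k}-n^{-1/2+\eps}$, and here I would either quote the subcritical half of~\cite{jhk} directly or, equivalently, bound $\pr[\C_k(\calh_\dd(n,cn))\neq\emptyset]$ by a first moment over candidate cores. The expected number of vertex subsets spanning a sub-hypergraph of minimum degree at least $k$ equals $e^{(g(c)+o(1))n}$, where $g(c)=\max_{\a',\b'}\Psi(c;\a',\b')$ over admissible densities; one has $g(c)<0$ for every $c<c_{\dd,k}$, $g(c_{\dd,k})=0$ attained at $(\a,\b)$, and $g$ vanishes at the threshold at most quadratically, $g(c)\le-\Omega((c_{\dd,k}-c)^2)$ as $c\uparrow c_{\dd,k}$ --- the quadratic rate being the combinatorial shadow of $h'(\mu_{\dd,k})=0$ (and consistent with the critical window having width $\Theta(n^{-1/2})$). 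Hence at $c=c_{\dd,k}-n^{-1/2+\eps}$ the first moment is $e^{-\Omega(n^{2\eps})}=o(1)$, so a.a.s.\ there is no $k$-core; a routine model comparison transfers this to $\calh_\dd(n,cn)$ if one has worked in $\calh_\dd(n,p)$.

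The main obstacle, were one aiming for a self-contained proof instead of quoting~\cite{jhk}, is the sharp behaviour at the edge of the critical window for $(\dd,k)\neq(2,2)$: the quadratic lower bound for $g$ in part~(a), and the precise $\Theta(\delta^{-1/2})$ growth of $\a',\b'$ and of the core-size variance used in part~(b). All of these are manifestations of the degenerate minimum $h'(\mu_{\dd,k})=0$ and call for a Laplace-type or local central limit analysis near $\mu_{\dd,k}$, rather than the soft convexity arguments that suffice far from the threshold. Within the deduction as planned, the only genuine care required is the bookkeeping of error terms in the regime $\delta=n^{-1/2+o(1)}$, where $\a'(c)$ and $\b'(c)$ diverge.
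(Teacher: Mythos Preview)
Your proposal and the paper take the same route: the paper does not prove Theorem~\ref{tkim} at all but simply states it as a known result ``easily deduced from~\cite{jhk} (see the discussion above~\cite[Lemma 7]{gm}),'' and you likewise plan to deduce it from~\cite{jhk} via the reference to~\cite{gm}. The branching-process heuristic, the $\Theta(\delta^{-1/2})$ sensitivity bookkeeping for part~(b), and the first-moment sketch for part~(a) that you supply are additional colour not present in the paper, but they are consistent with the intended deduction and with the finer estimate in Lemma~\ref{lcoresize}; since the paper treats the theorem as a citation rather than something to be proved here, there is nothing further to compare.
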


When the parallel stripping process is applied to $\H_r(n,cn)$ where $c=c_{r,k}+o(1)$, the vertices are stripped off fast in the beginning; in each round there are a linear number of vertices being removed. This continues until the number of vertices gets close to $\a n$. Our analysis will start from there.

\section{The allocation-partition model}

It is not easy to analyse random processes such as the parallel stripping process when applied to $\H_r(n,m)$, due to the dependency between the hyperedges. Instead we consider the following alternative model, called the allocation-partition model (AP-model). Take $rm$ points and uniformly at random (u.a.r.) allocate the points into a set of $n$ bins. Then, take a uniform partition of the $rm$ points so that each part has size exactly $r$. We call the resulting probability space $AP_r(n,m)$ and each element in $AP_r(n,m)$ a configuration. Each configuration in $AP_r(n,m)$ corresponds to a multi-hypergraph by representing each bin as a vertex and each part in the partition as a hyperedge. In this paper we call bins as vertices for simplicity. Each part in the partition is an $r$-tuple of points, which we may call a hyperedge when there is no confusion. The {\em degree} of a vertex $u$ in a configuration is the number of points that $u$ contains. 
Note that the AP-model is similar to the configuration model of  Bollob\'as~\cite{bb}, except that in the configuration model, the degree sequence is specified initially whereas in the AP-model, the degree sequence is a random variable determined by the allocation of the points into the bins. 
A simple counting argument shows that each hypergraph in $\H_r(n,m)$ corresponds to the same number of configurations in $AP_r(n,m)$. Therefore,  $AP_r(n,m)$ generates the hypergraphs in $\H_r(n,m)$ uniformly by conditioning on the resulting hypergraph being simple. For $m=O(n)$, following a result by Chv\'{a}tal~\cite{Chvatal} that the probability that a configuration in $AP_r(n,m)$ corresponds to a simple hypergraph is bounded away from zero (the proof in~\cite{Chvatal} is for $r=2$ but easily extends to general $r\ge 2$), the following corollary allows one to translate a.a.s.\ properties of $AP_r(n,m)$ to $\H_r(n,m)$.




\begin{corollary}\lab{ccon}
If $m=O(n)$ and property $Q$ holds a.a.s.\ in $AP_r(n,m)$, then $Q$ holds a.a.s.\ in $\H_r(n,m)$.
\end{corollary}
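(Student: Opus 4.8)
The plan is the standard conditioning argument, built on the two facts already assembled above: first, the simple counting argument showing that every simple hypergraph in $\H_r(n,m)$ is the image of exactly the same number of configurations of $AP_r(n,m)$, so that conditioning the (uniform) law on $AP_r(n,m)$ on the event that the resulting multi-hypergraph is simple yields precisely the uniform distribution on $\H_r(n,m)$; and second, the extension of Chv\'{a}tal's estimate~\cite{Chvatal} to general $r$, which gives a constant $\eta=\eta(r)>0$ such that $\pr(S)\ge\eta$ for all large $n$, where $S$ denotes the event in $AP_r(n,m)$ that the configuration corresponds to a simple hypergraph. Before running the computation I would make precise the sense in which $Q$ is transported between the two models: since $Q$ is a property of (multi-)hypergraphs, I regard it as an event in $AP_r(n,m)$ by declaring that a configuration satisfies $Q$ iff its underlying multi-hypergraph does; on the event $S$ this agrees with $Q$ viewed as an event in $\H_r(n,m)$.

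With this in place, let $\overline Q$ denote the complementary property. Using that conditioning on $S$ reproduces the uniform distribution on $\H_r(n,m)$, together with the lower bound $\pr(S)\ge\eta$,
\[
\pr_{\mathcal H}(\overline Q)=\pr_{AP}\!\left(\overline Q \mid S\right)\le \frac{\pr_{AP}(\overline Q)}{\pr_{AP}(S)}\le \frac{\pr_{AP}(\overline Q)}{\eta}.
\]
By hypothesis $Q$ holds \aas in $AP_r(n,m)$, so $\pr_{AP}(\overline Q)=o(1)$; since $\eta$ is a positive constant independent of $n$, the right-hand side is $o(1)$, i.e.\ $Q$ holds \aas in $\H_r(n,m)$, which is the claim.

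There is essentially no real obstacle here; the only points needing care are already handled by the cited inputs, namely (i) the uniformity of the bound $\pr(S)\ge\eta$ over the whole range $m=O(n)$ — exactly what the extension of Chv\'{a}tal's argument provides — and (ii) the equal-cardinality of the fibres of the map from configurations to hypergraphs, which is the simple counting argument mentioned above. I note in passing that the argument is robust: even a weaker bound such as $\pr(S)=\Omega(n^{-\gamma})$ would suffice whenever $\pr_{AP}(\overline Q)$ decays faster than $n^{-\gamma}$, but the clean constant lower bound makes any such refinement unnecessary.
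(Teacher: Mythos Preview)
Your proof is correct and is exactly the argument the paper has in mind: the paper does not spell out a separate proof of the corollary but simply records it as an immediate consequence of the two facts stated just above it (equal-size fibres of the configuration-to-hypergraph map, and Chv\'{a}tal's constant lower bound on the probability of simplicity when $m=O(n)$). Your conditioning computation $\pr_{\mathcal H}(\overline Q)=\pr_{AP}(\overline Q\mid S)\le \pr_{AP}(\overline Q)/\pr_{AP}(S)=o(1)$ is precisely the standard derivation implicit in the paper's text.
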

Running the parallel stripping process on a configuration of the AP-model is a natural extension.
In the rest of the paper, we will study $s(H)$ and the maximum depth of the non-$k$-core vertices of $H$ for $H\in AP_r(n,cn)$. We note here that Theorem~\ref{tkim} holds for $AP_r(n,cn)$ as well, as claimed in~\cite{CW}.


\section{Proof of Theorem~\ref{thm:Stripsub}}\lab{sec:Stripsub}

We bound $s(AP_r(n,cn))$ in this section. Without loss of generality, we assume $\deltac:=|c-c_{r,k}|=o(1)$ throughout the paper as the case $\deltac=\Omega(1)$ has already been verified in previous works, e.g.~\cite{amxor,g2,jmt}.  Recall that $H_0, H_1, H_2,\ldots$ is the parallel stripping process with $H_0\in AP_r(n,cn)$.  Let $S_i$ ($i\ge 0$) denote the set of light vertices (vertices with degree less than $k$) in $H_i$, i.e.\ the set of vertices removed during the $(i+1)$-th iteration of the parallel stripping process. Thus, $S_i=V(H_{i})\setminus V(H_{i+1})$.

 \subsection{Proof outline}

 Rather than starting our analysis with $H_0$ in the parallel stripping process, we would start our analysis from some $H_i$ (or some configuration ``close to'' $H_i$), where $i$ is chosen so that the number of vertices in $H_i$ is very close to $\a n$. The choice of $i$ relies on a coupling of two random configurations $AP_r(n,cn)$ and $AP_r(n,c'n)$. Let $\deltac'=c'-c_{r,k}$. Throughout Section~\ref{sec:Stripsub}, we always choose $c'$ satisfying the following conditions (recall that $\deltac=|c-c_{r,k}|=o(1)$ is assumed):
\be
\deltac'=O(c'-c)=o(1),\ c'-c=o(\sqrt{\deltac'}),\  c'\ge c_{r,k}+n^{-1/2+\eps} \ \mbox{for some constant $\eps>0$}.\lab{condc}
\ee
We do not repeat this assumption in all statements of lemmas.
We will describe the coupling in Section~\ref{sec:coupling}. The value of $c'$ is chosen differently in the proofs of part (a) and part (b) of Theorem~\ref{thm:Stripsub}.

We use the coupling to start our analysis from some $H_i$, or more precisely some configuration $G_0$ (defined in Section~\ref{sec:coupling}) close to $H_i$, such that (a), the number of light vertices in $G_0$ is of order $n(c'-c)$; (b), $i=O(\log(1/\deltac')/\sqrt{\deltac'})$. In Section~\ref{sec:G0} we analyse properties of $G_0$. In Section~\ref{sec:a} we prove part (a). We will choose some $c'>c_{r,k}$ such that $\deltac$ and $\deltac'$ are of the same asymptotic order. Then we specify two iterations $I_0$ and $I_1$ in the parallel process such that, by iteration $I_0$, $|S_i|$ decreases in each iteration; whereas starting from $I_1$, $|S_i|$ increases in each iteration until reaching a linear size; then $|S_i|$ keeps of linear size until the number of remaining vertices  in the configuration is at most $\sigma n$, for some small constant $\sigma>0$ ($\sigma$ is specified in Section~\ref{sec:sigma}). We will analyse closely the critical iterations from $I_0$ to $I_1$ during which the growth rate of $|S_i|$ changes from negative to positive, and we will bound $I_1-I_0$ by $O(1/\sqrt{\xi})$. Then, we will bound the growth rate of $|S_i|$ from below for each iteration after $I_1$, which allows us to bound the number of iterations needed until the number of  remaining vertices is at most $\sigma n$. In Section~\ref{sec:sigma}, we show that it takes $O(\log \log n)$ steps to strip off all vertices when there are at most $\sigma n$ vertices left.

In Section~\ref{sec:b}, we prove part (b). We will choose $c'=c_{r,k}+n^{-1/2+2\eps}$ in this case. We prove that within $n^{3/4+\eps}$ iterations, either the parallel stripping process has terminated with a non-empty $k$-core; or $|S_i|$ becomes reasonably large and it will grow in each iteration until reaching a linear size. In the latter case, with a similar argument as for part (a), we can bound the number of remaining iterations until all vertices are removed.

\subsection{Coupling}
\lab{sec:coupling}

We will couple two random configurations $(H',H)$ as follows. Let $H'\in AP_r(n,c'n)$ where $c'> c$ and $c'>c_{r,k}+n^{-\d'}$ for some constant $0<\d'<1/2$ (this ensures that a.a.s.\ $AP_r(n,c'n)$ has a non-empty $k$-core by Theorem~\ref{tkim}). Generate a random configuration $H$ by uniformly at random removing $(c'-c)n$ $r$-tuples in $H'$.  This resulting $H$ has the distribution $AP_r(n,cn)$ and moreover $H\subseteq H'$.  Run the parallel stripping process on $H'$ which yields a sequence $(H'_t)_{t\ge 0}$ with $H'_0=H'$.
Recall that $S'_i=V(H'_i)\setminus V(H'_{i+1})$. Let $B>0$ be a large constant to be specified later. Define
\be
\tau'(B)=\min\{t\ge B:\ |S'_t|\le n\deltac'\}.\lab{tauprime}
\ee
Now we define a coupled process $(\hH'_t,\hH_t)_{t\ge 0}$ as follows. Let $\hH'_0=H'$ and $\hH_0=H$. For each $1\le t\le \tau'(B)$, define $\hH'_t=H'_t$ and $\hH_t$ to be the configuration obtained from $\hH_{t-1}$ by removing all vertices in $V(H'_{t-1})\setminus V(H'_{t})$ together with their incident $r$-tuples. Since $\hH_0\subseteq \hH'_0=H'_0$, the set of vertices removed in each step of $(\hH_t)_{t=0}^{\tau'(B)}$ has degree less than $k$. Hence, $(\hH_t)_{t=0}^{\tau'(B)}$ can be viewed as a slowed-down version of the parallel stripping process on $\hH_0=H$.  
In other words, if we let $H''$ denote the configuration obtained by removing all light vertices  in $\hH_{\tau'(B)}$, then $H''$ must be a configuration $H_i$ that appears in the parallel stripping process $H_0,H_1,\ldots$ for some $i\le \tau'(B)+1$. Then,
\be
s(H)\le \tau'(B)+1+s(H'')\le \tau'(B)+s(\hH_{\tau'(B)})+1,\lab{couple}
\ee
since $s(H'')\le s(\hH_{\tau'(B)})$ as $H''\subseteq\hH_{\tau'(B)}$ by definition.  
It only remains to specify $B$, and to bound $\tau'(B)$ and $s(\hH_{\tau'(B)})$.
\smallskip

\no {\bf Specifying $B$ and bounding $\tau'(B)$}\smallskip

The behaviour of $(|S'_i|)_{i\ge 0}$ has been well studied in the prior paper~\cite{gm} (recall that for $H'$ we have $c'\ge c_{r,k}+ n^{-1/2+\eps}$, assumed in~\eqn{condc}). We cite here the relevant parts of~\cite[Lemma 49(a--c)]{gm}, which enables us to bound $\tau'(B)$, and will be useful for later use.
\begin{lemma}\lab{lem:Si}
There exist positive constants $B$, $Y_1$, $Y_2$ and $Z_1$ dependent only on $r$ and $k$, such that a.a.s. for
every $i\ge B$ with $|S'_i|\ge (1/\deltac')\log^2 n$, we have
\begin{enumerate}
\item[(a)] if $|S'_i|\ge n\deltac'$ then $(1-Y_1\sqrt{|S'_i|/n})|S'_i|\le |S'_{i+1}|\le (1-Y_2\sqrt{|S'_i|/n})|S'_i|$;
\item[(b)] if $|S'_i|<n\deltac'$ then $(1-Y_1\sqrt{\deltac'})|S'_i|\le |S'_{i+1}|\le (1-Y_2\sqrt{\deltac'})|S'_i|$;
\item[(c)] $\sum_{j\ge i}|S'_j| \le (Z_1/\sqrt{\deltac'}) |S'_i| $.
\end{enumerate}
\end{lemma}

Let $B$ be a constant chosen to satisfy Lemma~\ref{lem:Si}; this completes the definition of $\tau'(B)$ in~\eqn{tauprime}. Since $|S'_i|>n\deltac'$ for all $B\le i\le \tau'(B)-1$, recursively applying Lemma~\ref{lem:Si}(a), we have
\[
n\deltac'<|S'_{\tau'(B)-1}|\le (1-Y_2\sqrt{\deltac'})^{\tau'(B)-1-B} |S'_{B}|\le n\exp\Big(-Y_2\sqrt{\deltac'}(\tau'(B)-1-B)\Big). 
\]
This immediately yields that
\be
\tau'(B)=O\left(\frac{\log(1/\deltac')}{\sqrt{\deltac'}}\right). \lab{tauprimeBound}
\ee

By~\eqn{couple}, It only remains to bound $s(\hH_{\tau'(B)})$. 
By Lemma~\ref{lem:Si}(c) with $i=\tau'(B)$, it follows that 
\be
|H'_{\tau'(B)}|-|\C_k(H')|=O(n\deltac'/\sqrt{\deltac'})=O(n\sqrt{\deltac'}).\lab{G0tocore}
\ee

\no {\bf Generation of $\hH_{\tau'(B)}$}\smallskip

Note that $H$ is generated from $H'$ by removing u.a.r.\ $(c'-c)n$ $r$-tuples in $H'$. Let ${\mathcal E}=E(H')\setminus E(H)$ and let $X$ denote the number of $r$-tuples in ${\mathcal E}$ that are in $\hH'_{\tau'(B)}=H'_{\tau'(B)}$. Then, these $X$ $r$-tuples are uniformly distributed among all the $r$-tuples in $H'_{\tau'(B)}$. Hence,  $\hH_{\tau'(B)}$ can be generated by removing $X$ $r$-tuples uniformly at random in $H'_{\tau'(B)}$.

We bound $X$ in the following lemma (the proof is deferred). 
\begin{lemma}\lab{lem:X}
A.a.s.\ $X=\Theta((c'-c)n)$.
\end{lemma}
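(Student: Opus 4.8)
The plan is to compute the expectation of $X$ and then show concentration. Recall $\mathcal{E} = E(H') \setminus E(H)$ consists of exactly $(c'-c)n$ $r$-tuples chosen uniformly at random among the $c'n$ $r$-tuples of $H'$, and $X$ counts how many of these land in $H'_{\tau'(B)}$, the configuration obtained after $\tau'(B)$ stripping rounds on $H'$. The key point is that the process generating $H'_{\tau'(B)}$ depends only on $H'$, so we may condition on $H'$ and on the set $E(H'_{\tau'(B)})$ of surviving $r$-tuples first, and only afterwards reveal which $(c'-c)n$ of the $c'n$ $r$-tuples form $\mathcal{E}$. Under this conditioning, $\mathcal{E}$ is a uniformly random $(c'-c)n$-subset of $E(H')$, so $X$ is hypergeometric with
\[
\ex[X \mid H'] = (c'-c)n \cdot \frac{|E(H'_{\tau'(B)})|}{c'n} = (c'-c) \cdot |E(H'_{\tau'(B)})|.
\]

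Next I would pin down $|E(H'_{\tau'(B)})|$ up to a constant factor. By definition of $\tau'(B)$ in~\eqn{tauprime} we have $|S'_{\tau'(B)}| \le n\deltac'$, and combining this with Lemma~\ref{lem:Si}(c) (applied at $i=\tau'(B)$, together with~\eqn{G0tocore}) gives $|V(H'_{\tau'(B)})| = |\C_k(H')| + O(n\sqrt{\deltac'})$, which is $(\a(c') + o(1))n = \Theta(n)$ since $\a(c') \to \a = \a_{r,k} > 0$ as $c' \to c_{r,k}$. The number of $r$-tuples in $H'_{\tau'(B)}$ is at least the number of $r$-tuples inside $\C_k(H')$, which by Theorem~\ref{tkim}(b) is $\b(c')n + O(n^{3/4}) = \Theta(n)$ (again $\b(c') \to \b > 0$); and it is at most $c'n = \Theta(n)$ trivially. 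Hence $|E(H'_{\tau'(B)})| = \Theta(n)$ a.a.s., so $\ex[X \mid H'] = \Theta((c'-c)n)$ a.a.s.

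Finally, concentration: since $X$ is a hypergeometric random variable (conditionally on $H'$), standard tail bounds — e.g.\ a Chernoff-type bound for sampling without replacement, or McDiarmid's inequality for the function of the $(c'-c)n$ chosen $r$-tuples, which changes by at most $1$ under a single swap — give that $X$ is concentrated around its conditional mean with deviations $O(\sqrt{n}\log n) = o((c'-c)n)$ with probability $1-o(1)$, using that $(c'-c)n \ge (c' - c_{r,k})n = \deltac' n \cdot (1+o(1)) \ge n^{1/2+\eps/2}$ by~\eqn{condc} and the lower bound on $c'$. (More precisely one uses $c'-c = \Omega(n^{-1/2+\eps'})$ for a suitable $\eps' > 0$, which follows from~\eqn{condc}; if $c'-c$ were too small relative to $\sqrt{n}$ the statement would need the weaker interpretation, but the regime of~\eqn{condc} rules this out.) Combining the a.a.s.\ bound on the conditional mean with the conditional concentration yields $X = \Theta((c'-c)n)$ a.a.s.

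The main obstacle is not any single estimate but making the conditioning rigorous: one must verify that revealing $H'$ and the surviving edge set $E(H'_{\tau'(B)})$ before revealing $\mathcal{E}$ is legitimate — i.e.\ that the joint distribution of $(H', H)$ can genuinely be generated in this order — and that the a.a.s.\ events from Lemma~\ref{lem:Si} and Theorem~\ref{tkim} (which hold for $H'$ unconditionally) can be combined cleanly with the conditional tail bound on $X$. This is routine but needs a careful two-stage argument.
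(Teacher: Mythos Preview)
Your proposal is correct and takes essentially the same approach as the paper: condition on $H'$, note that $|E(H'_{\tau'(B)})|=\Theta(n)$ a.a.s.\ (via Theorem~\ref{tkim}), and use concentration of the resulting hypergeometric $X$ around its mean $\Theta((c'-c)n)$; the paper phrases the lower bound as stochastic domination by $\Bin((c'-c)n,C)$ for a constant $C>0$, with the upper bound $X\le(c'-c)n$ being trivial. One harmless arithmetic slip: your displayed simplification should read $\frac{c'-c}{c'}\,|E(H'_{\tau'(B)})|$ rather than $(c'-c)\,|E(H'_{\tau'(B)})|$, but since $c'=\Theta(1)$ this does not affect the $\Theta$-conclusion.
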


To analyse $s(\hH_{\tau'(B)})$, it is more convenient to use a slowed-down version of the parallel stripping process, called SLOW-STRIP (which is also used in~\cite{gm}), defined as follows. It iteratively deletes an $r$-tuple incident with a light vertex. We will use a queue $\Q$ to store all light vertices.

\begin{tabbing}
{\bf SLOW-STRIP}\\
{Input:}  A configuration $G$.\\
Initialize: $t:=0,G_0:=G$, $\msq$ is the set of light vertices in $G$.\\
Whi\=le \= $\msq\neq\emptyset$:\\
\>Let $v$ be the next vertex in $\msq$.\\
\>If $v$ contains \= no points then remove $v$ from $G$ and from $\msq$.\\ \>Oth\=erwise:\\
\>\> Remove one point $x$ in $v$, and the $r-1$ points in the same part as $x$.\\
\>\> If some other vertex $u$ becomes light then add $u$ to the end of $\msq$.\\
\> $G_{t+1}$ is the resulting configuration; $t:=t+1$.
\end{tabbing}

Define
\be
G_0=\hH_{\tau'(B)} \lab{G0}
\ee
 and $G_t$ to be the resulting configuration after $t$ steps of SLOW-STRIP applied to $G_0$.
 We use $L(G)$ to denote the total degree of the light vertices in $G$. For simplicity, let $L_t=L(G_t)$. We will analyse $(L_t)_{t\ge 0}$, starting with $L_0$.
\begin{lemma}\lab{lem:L0} A.a.s.\ $L_0=\Theta((c'-c)n)$.
\end{lemma}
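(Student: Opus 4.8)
The plan is to show that $L_0 = L(\hH_{\tau'(B)})$, the total degree of light vertices in $G_0 = \hH_{\tau'(B)}$, is of order $(c'-c)n$, by relating it to the quantities we already control. First I would record the two facts we have in hand: by Lemma~\ref{lem:X}, $G_0$ is obtained from $H'_{\tau'(B)}$ by deleting $X = \Theta((c'-c)n)$ uniformly random $r$-tuples; and by~\eqn{G0tocore} together with Theorem~\ref{tkim}(b), $|E(H'_{\tau'(B)})| - \b(c')n = O(n\sqrt{\deltac'})$, so $H'_{\tau'(B)}$ has $\Theta(n)$ hyperedges and $\Theta(n)$ vertices, all of degree $\ge k$ in $H'_{\tau'(B)}$ (since $H'_{\tau'(B)}$ is a $\ge k$-degree configuration on the surviving vertices, its minimum degree is at least $k$ — this is the content of $\tau'(B)$ being reached, or more simply because the stripping process on $H'$ only stops removing a vertex once its degree is $\ge k$; in any case $H'_{\tau'(B)} \supseteq \C_k(H')$ and the gap is $O(n\sqrt{\deltac'})$ vertices which are about to be stripped).

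The upper bound on $L_0$ is the easy direction: every light vertex in $G_0$ acquired light status only because one of the $X$ deleted $r$-tuples removed one of its points, so the number of light vertices in $G_0$ is at most $rX = O((c'-c)n)$, and since each has degree less than $k$, $L_0 \le (k-1)\cdot rX = O((c'-c)n)$. For the lower bound I would argue as follows. Consider the $X$ deleted $r$-tuples; they touch at least $X$ distinct vertices (counting with multiplicity at least $X$ incidences), and in the original configuration $H'_{\tau'(B)}$ these vertices had degree exactly equal to their degree there, which is at least $k$ but is in expectation bounded (the degree distribution in the AP-model restricted to the core is a truncated-Poisson-type distribution with bounded mean, by the standard analysis underlying Theorem~\ref{tkim}). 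A positive fraction of the vertices incident to deleted $r$-tuples have degree exactly $k$ in $H'_{\tau'(B)}$ and lose exactly one point, hence become light with degree $k-1 \ge 1$ in $G_0$. More carefully: the number of (vertex, incident deleted $r$-tuple) pairs is at least $X$; among the vertices in such pairs, by the bounded-degree property at most an $O(1/k!)$-fraction... rather, a constant fraction have degree in $\{k, \ldots, 2k-1\}$ say, and for each such vertex, if all but one of its $\le 2k-1$ incident tuples survive (which happens with probability bounded below, since $X/|E(H'_{\tau'(B)})| = o(1)$ and the deleted set is uniform, so the chance a given vertex of bounded degree loses $\ge 2$ of its tuples is $O((X/n)^2) = o(X/n)$ of the relevant count) it becomes light with degree $\ge 1$. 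Summing, $L_0 = \Omega(X) = \Omega((c'-c)n)$.

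To make the lower-bound count rigorous I would use a first-moment / concentration argument on the random set $\mathcal{E}' := \mathcal{E} \cap E(H'_{\tau'(B)})$ of the $X$ deleted tuples, conditioning on $H'_{\tau'(B)}$ (which is determined by the part of $H'$ that survives, independent of which of the surviving-at-step-$\tau'(B)$ tuples later get marked for deletion in forming $H$ — here one must be slightly careful about the order of exposure, but the key point established in the paragraph preceding Lemma~\ref{lem:X} is precisely that the $X$ deleted tuples are \emph{uniformly distributed} among the $r$-tuples of $H'_{\tau'(B)}$, so we may treat $\mathcal{E}'$ as a uniform random $X$-subset of $E(H'_{\tau'(B)})$). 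Let $\ell(v)$ denote the number of points vertex $v$ loses; then $L_0 = \sum_v \max(0, \deg_{H'_{\tau'(B)}}(v) - \ell(v))\cdot \mathbf{1}[v \text{ light in } G_0]$, and I would compute $\ex[\#\{v : \deg_{H'_{\tau'(B)}}(v) = k,\ \ell(v) = 1\}]$, which is $\sum_{v:\deg = k} \pr(\text{exactly one of } v\text{'s } k \text{ incident tuples is in } \mathcal{E}')$; since the number of degree-$k$ vertices is $\Theta(n)$ and each such probability is $\Theta(X/n)$, this expectation is $\Theta(X) = \Theta((c'-c)n)$, and an Azuma/Chebyshev bound (the quantity is a function of the uniform $X$-subset, Lipschitz with bounded increments) gives concentration. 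The main obstacle I anticipate is not the counting itself but cleanly justifying that, at step $\tau'(B)$, the configuration $H'_{\tau'(B)}$ has $\Theta(n)$ vertices of degree exactly $k$ (so the lower bound doesn't collapse) and that the deletion set is genuinely uniform and independent of the degree profile of $H'_{\tau'(B)}$ — i.e.\ pinning down the measure-theoretic setup of the coupling so that the conditional argument is valid. The degree-profile fact should follow from Theorem~\ref{tkim}(b) combined with~\eqn{G0tocore}: $H'_{\tau'(B)}$ is within $O(n\sqrt{\deltac'}) = o(n)$ vertices of $\C_k(H')$, which has $\a(c')n = \Theta(n)$ vertices with a truncated-Poisson degree distribution putting $\Theta(1)$ mass on degree $k$, so $H'_{\tau'(B)}$ has $\Theta(n)$ degree-$k$ vertices as required.
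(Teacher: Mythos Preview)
Your upper-bound argument has a genuine gap. You assert that every light vertex of $G_0$ became light only through the removal of one of the $X$ deleted $r$-tuples, justifying this by claiming $H'_{\tau'(B)}$ has minimum degree at least $k$. That claim is false: $H'_{\tau'(B)}$ is the configuration after $\tau'(B)$ rounds of parallel stripping on $H'$, and it still contains the set $S'_{\tau'(B)}$ of vertices of degree $<k$ (exactly those to be removed in round $\tau'(B)+1$). You half-acknowledge this in your parenthetical about vertices ``about to be stripped,'' but your count then ignores them. Moreover, the bound $O(n\sqrt{\deltac'})$ you quote for that gap (from~\eqn{G0tocore}) would not suffice here, since $c'-c = o(\sqrt{\deltac'})$ by~\eqn{condc}, so $n\sqrt{\deltac'}$ is not $O((c'-c)n)$. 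The paper fixes this by writing $L_0 = O(|S'_{\tau'(B)}| + X)$ and using the sharper bound $|S'_{\tau'(B)}| \le n\deltac'$ coming straight from the definition~\eqn{tauprime} of $\tau'(B)$; then $n\deltac' = O((c'-c)n)$ by~\eqn{condc}, and the pre-existing light vertices are absorbed.

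Your lower-bound strategy is close in spirit to the paper's but executed differently. You propose to count degree-$k$ vertices of $H'_{\tau'(B)}$ that lose exactly one point, establishing the $\Theta(n)$ count of degree-$k$ vertices indirectly via proximity to $\C_k(H')$ and its truncated-Poisson degree profile, and then concentrating via an Azuma or Chebyshev bound on the uniform $X$-subset. The paper instead removes the $X$ tuples one at a time, invokes Proposition~\ref{p:Poisson} directly at each step to get $\Theta(n)$ degree-$k$ vertices, so that the number of vertices becoming light stochastically dominates $\Bin(X,\gamma_0)$ for some constant $\gamma_0>0$; it then separately shows by a first-moment bound that the expected number of vertices whose degree drops all the way to zero is $o((c'-c)n)$, so that enough of the new light vertices contribute positively to $L_0$. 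Both routes work; the paper's sequential-removal argument sidesteps the explicit concentration step and handles the degree-zero exception cleanly, while your direct count is arguably more transparent once the degree profile of $H'_{\tau'(B)}$ is pinned down.
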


The proofs of Lemmas~\ref{lem:X} and~\ref{lem:L0} are simple and are deferred to Section~\ref{sec:G0} where we will have a close study of $G_0$. 
To close this subsection, we discuss several key parameters that will be used throughout the proof. Let $N(G)$ and $D(G)$ denote the number of heavy vertices and the total degree of the heavy vertices of a configuration $G$. Define
\be
\zeta(G)=\frac{D(G)}{N(G)}. \lab{zeta}
\ee
For simplicity, we use $N_t$, $D_t$ and $\zeta_t$ for $N(G_t)$, $D(G_t)$ and $\zeta(G_t)$.

When SLOW-STRIP is applied to a random configuration, the algorithm does not expose the partition initially. When a point $x$ in $v$ is chosen to be deleted in a step, SLOW-STRIP exposes the $r$-tuple that contains $x$ by u.a.r.\ choosing  another $r-1$ points from the remaining points. These $r$ points are removed in that step of SLOW-STRIP. The partition of the remaining points remains unexposed and is uniformly distributed. 

We use $G_t$ to denote the random configuration obtained after $t$ steps of SLOW-STRIP, even though the $r$-tuples in $G_t$ have not been exposed yet, nor the allocation of all of the points in $G_t$.  Let $\calf_t=(L_t,N_t,D_t)$. The only information exposed after step $t$ is $\calf_t$,  and the set of vertices in $\Q$ as well as the points they contain. By the definition of the AP-model, conditioning on the exposed information, $G_t$ is a random configuration obtained by uniformly allocating the $D_t$ points to the $N_t$ heavy vertices, subject to each of these vertices receiving at least $k$ points, and u.a.r.\ partitioning the $L_t+D_t$ points. 
\begin{definition}
Define $\bar p_t=\bar p_t(\calf_t)$ to be the probability that a given point is allocated to a bin containing exactly $k$ points, in a uniform allocation of $D_t$ points into $N_t$ bins, subject to each bin receiving at least $k$ points. 
\end{definition}
 If we choose u.a.r.\ a point $x$ from the heavy vertices of $G_t$, then $\bar p_t$ is the probability that $x$ is contained in a vertex with degree $k$.  Analysing $\bar p_t$ is a key part of the analysis. We briefly explain why this parameter is important.

In each step of SLOW-STRIP, one point in a given light vertex is removed, together with another $r-1$ points $u_1,\ldots,u_{r-1}$ u.a.r.\ chosen from all the remaining points. For each of these $r-1$ points, if it is contained in a vertex with degree $k$, then the removal of this point results in a new light vertex and $L_t$ will increase by $k-1$. If we conditional on that the point is in a heavy vertex, then this probability is approximately $\bar p_t$. 

  We will restrict our analysis for steps $t$ such that the number of heavy vertices in $G_t$ is at least $\sigma n$ for some constant $\sigma>0$ to be specified in Section~\ref{sec:sigma}. Let $B_t=L_t+D_t$; i.e.\ $B_t$ denotes the total degree of $G_t$. Then, we may assume that $B_t\ge k\sigma n$. At step $t+1$, for each point $u_i$ ($1\le i\le r-1$) that are removed in this step, the probability that
it lies in a light vertex is $L_t/B_t+O(1/n)$.
Then, if $L_t>0$,
\be
\ex(L_{t+1}-L_t\mid \calf_t)=-1+(r-1)\left(-\frac{L_t}{B_t}+\left(1-\frac{L_t}{B_t}\right)(k-1)\bar p_t\right)+O(n^{-1}),\lab{L-diff}
\ee
where $O(n^{-1})$ above accounts for errors from two cases: (a), when deleting $u_i$, the total degree of the light vertices is $L_t+O(1)$ and the total degree is $B_t+O(1)$; (b), more than one of $u_i$'s are contained in the same vertex that becomes light after step $t+1$.

It is convenient to define
\be
\theta_t=-1+(r-1)(k-1)\bar p_t.\lab{deftheta}
\ee
 Then~\eqn{L-diff} can be rewritten as
\be
\ex(L_{t+1}-L_t\mid \calf_t)=\theta_t-(\theta_t+r)\frac{L_t}{B_t}+O(n^{-1}).\lab{Ldiff}
\ee

\subsection{Useful lemmas from~\cite{gm} and other related works}

In this section, we state several lemmas in the literature that are useful for our analysis. Most of them have appeared in the prior work~\cite{gm}. In Section~\ref{sec:coupling} we defined $G_0$, which is close to $\C_k(AP_r(n,c'n))$, since $G_0$ is obtained by removing a small number of $r$-tuples in $H'_{\tau'(B)}$, whereas $H'_{\tau'(B)}$ is close to $\C_k(AP_r(n,c'n))$ by~\eqn{G0tocore}. 

We mentioned in Section~\ref{sec:coupling} that we will keep track of $\bar p_t$, or correspondingly $\theta_t$. Note that $\theta_t$ is a function of $\calf_t$. Next, we specify this function.

Given positive integers $n$, $m$ and $k\ge 0$ such that $m\ge kn$, define $Multi(n,m,k)$, the {\em truncated multinomial distribution}, to be the probability space consisting of integer vectors ${\bf X}=(X_1,\ldots,X_n)$ with domain ${\mathcal I}_k:=\{{\bf d}=(d_1,\ldots,d_n):\ \sum_{i=1}^n d_i=m,\ d_i\ge k,\ \forall i\in[n]\}$, such that for any ${\bf d}\in {\mathcal I}_k$,
$$
\pr({\bf X}={\bf d})=\frac{m!}{n^m \Psi}\prod_{i\in [n]}\frac{1}{d_i! }=\frac{\prod_{i\in[n]}1/d_i!}{\sum_{{\bf d}\in{\mathcal I}_k}\prod_{i\in[n]}1/d_i!},
$$
where
$$
\Psi=\sum_{{\bf d}\in {\mathcal I}_k}\frac{m!}{n^m}\prod_{i\in [n]}\frac{1}{ d_i!}.
$$
It is well known that the degree distribution of the heavy vertices of $G_t$, conditional on $\calf_t$,
follows $Multi(N_t,D_t,k)$ (see e.g.~\cite{CW} for more details).
The truncated multinomial variables can be well approximated by independent truncated Poisson random variables. We formalise this in the following proposition whose proof can be found in~\cite[Lemma 1]{CW}.
\begin{proposition}\lab{p:Poisson}
Given integers $k$, $N$ and $D$ with $D> kN$, assume ${\bf X}\sim Multi(N,D,k)$. For any $j\ge k$, let $\rho_j$ denote the proportion of components in $X$ that equals $j$. Then, for any $\eps>0$, with probability $1-o(N^{-1})$,
\begin{equation}
\rho_j=e^{-\la}\frac{\la^j}{f_k(\la)j!}+O(N^{-1/2+\eps}), \lab{PoissonApprox}
\end{equation}
where $\la$ satisfies $\la f_{k-1}(\la)/f_k(\la)=D/N$.
\end{proposition}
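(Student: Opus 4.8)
\medskip\noindent\textbf{Proof proposal for Proposition~\ref{p:Poisson}.}
The plan is to use the standard Poissonisation of the truncated multinomial (as in~\cite{CW}). For a parameter $\la>0$ let $Y_1,\ldots,Y_N$ be i.i.d.\ copies of the truncated Poisson variable $Y$ supported on $\{k,k+1,\ldots\}$ with $\pr(Y=j)=e^{-\la}\la^j/(f_k(\la)\,j!)$. A one-line computation shows that for \emph{every} $\la>0$ the conditional law of $(Y_1,\ldots,Y_N)$ given $\sum_{i=1}^N Y_i=D$ is exactly $Multi(N,D,k)$: writing out $\pr(Y_1=d_1,\ldots,Y_N=d_N)=e^{-N\la}\la^{D}f_k(\la)^{-N}\prod_i 1/d_i!$ on $\{\sum_i d_i=D\}$ and dividing by $\pr(\sum_i Y_i=D)$, the factors $e^{-N\la}$, $\la^D$ and $f_k(\la)^{-N}$ cancel, leaving the weights $\prod_i 1/d_i!$ normalised over ${\mathcal I}_k$. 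Since $\ex Y=\la f_{k-1}(\la)/f_k(\la)$ is strictly increasing in $\la$ from $k$ (as $\la\to0^+$) to $\infty$, for $D/N>k$ there is a unique $\la$ with $\la f_{k-1}(\la)/f_k(\la)=D/N$; choose this $\la$, so that $\ex\big(\sum_i Y_i\big)=D$ and the conditioning event is the typical one. Throughout I assume, as holds in all applications where $D/N=\zeta_t=\Theta(1)$ is bounded away from $k$, that $\la$ stays in a fixed compact subinterval of $(0,\infty)$, so that $\ex Y$ and $\s^2:=\var(Y)$ are bounded above and below by positive constants.

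The argument then rests on two ingredients. (i) \emph{The conditioning event is not too rare}: $Y$ is a span-$1$ lattice variable with bounded, bounded-away-from-zero variance, so a local central limit theorem gives $\pr\big(\sum_{i=1}^N Y_i=D\big)=\frac{1+o(1)}{\sqrt{2\pi N\s^2}}=\Omega(N^{-1/2})$. (ii) \emph{Unconditional concentration of $\rho_j$}: for fixed $j\ge k$, $N\rho_j=\sum_{i=1}^N\mathbf 1[Y_i=j]$ is a sum of i.i.d.\ Bernoulli$(q_j)$ variables with $q_j=e^{-\la}\la^j/(f_k(\la)\,j!)$, so the Chernoff bound yields $\pr\big(|N\rho_j-Nq_j|\ge N^{1/2+\eps}\big)\le 2\exp(-cN^{2\eps})=o(N^{-2})$ for a constant $c=c(\eps)>0$.

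Combining the two: conditioning on $\{\sum_i Y_i=D\}$ (under which $(Y_1,\ldots,Y_N)\sim Multi(N,D,k)$), the probability that $|\rho_j-q_j|\ge N^{-1/2+\eps}$ is at most $2\exp(-cN^{2\eps})/\Omega(N^{-1/2})=o(N^{-3/2})=o(N^{-1})$, and $q_j=e^{-\la}\la^j/(f_k(\la)\,j!)$ is precisely the target value in~\eqref{PoissonApprox} with $\la$ as specified. Absorbing the $N^{-1/2+\eps}$ additive error (and, if desired, replacing $\eps$ by a slightly larger exponent) gives the stated conclusion.

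I expect the main obstacle to be ingredient (i): establishing the local limit estimate with enough uniformity to conclude $o(N^{-1})$ rather than merely $o(1)$. One checks that $D$ is an attainable value of $\sum_i Y_i$ lying within $O(\sqrt N)$ of its mean — automatic from the choice of $\la$ together with $D>kN$ — and that the $1/\sqrt N$ asymptotics hold uniformly over the relevant range of $D/N$; this amounts to a standard quantitative bound on the characteristic function of $Y$ away from the origin, which is routine once $\la$ is confined to a compact set. A secondary technical point is the degenerate regime $D/N\downarrow k$ (equivalently $\la\downarrow0$), where $\s^2\to0$ and the bounds must carry $\s$ explicitly; since the applications keep $D/N$ bounded away from $k$, I would simply impose that as a standing hypothesis.
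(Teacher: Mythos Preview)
The paper does not give its own proof of this proposition; it simply cites \cite[Lemma~1]{CW}. Your Poissonisation argument --- realising $Multi(N,D,k)$ as i.i.d.\ truncated Poissons conditioned on their sum, choosing $\la$ so that the conditioning event is at the mean, then combining a local limit lower bound $\pr(\sum Y_i=D)=\Omega(N^{-1/2})$ with a Chernoff bound on the unconditional deviation of $N\rho_j$ --- is exactly the standard approach and is essentially what Cain and Wormald do in the cited reference. Your identification of the two technical caveats (uniformity of the local CLT over a compact range of $\la$, and the degeneracy as $D/N\downarrow k$) is accurate, and your resolution --- imposing $D/N$ bounded away from $k$, which holds throughout the paper since $\zeta_t$ stays near $\zeta\in(k,r(k-1))$ --- is the right one.
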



Let $\la(x)$ be the root of $\la f_{k-1}(\la)=x f_k(\la)$, and define

\be
\psi(x)=\frac{e^{-\la(x)}\la(x)^{k-1}}{f_{k-1}(\la(x))(k-1)!}. \lab{h}
\ee

Recall from~\eqn{zeta} that $\zeta_t$ is the average degree of the heavy vertices of $G_t$. By definition, $\bar p_t=k\rho_k/\zeta_t$, where $\rho_k$ is the proportion of vertices with degree $k$. Thus $\bar p_t$ is approximately $\psi(\zeta_t)$ by~\eqn{h} and Proposition~\ref{p:Poisson}. 
With some elementary calculations (e.g.\ see~\cite[Lemma 30]{gm}) it is easy to show that $\psi(x)$ is decreasing on $(k,\infty)$. 
Thus, we have the following lemma of the approximation of $\bar p_t$ and $\theta_t$.
\begin{lemma}\lab{l:monotone}
Assume $N_t=\Omega(n)$. With probability $1-o(n^{-1})$,
\bean
\bar p_t&=& (1+O(n^{-1/2}\log n))\psi(\zeta_t),\\ 
\theta_t &=&-1+(k-1)(r-1)\psi(\zeta_t)+O(n^{-1/2}\log n).
\eean
Moreover, $\psi(x)$ is a strictly decreasing function on $x>k$; i.e.\ $\psi'(x)<0$ for every $x>k$. 
\end{lemma}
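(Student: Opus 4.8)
The plan is to establish Lemma~\ref{l:monotone} by combining the exact distributional fact about heavy-vertex degrees with the Poisson approximation in Proposition~\ref{p:Poisson}, and then reading off the consequences for $\bar p_t$ and $\theta_t$ from their definitions. First I would recall that, conditional on $\calf_t=(L_t,N_t,D_t)$, the degree sequence of the heavy vertices follows $Multi(N_t,D_t,k)$, and that $\zeta_t=D_t/N_t$ is its average. Since $N_t=\Omega(n)$ and $D_t\ge k N_t$ (indeed $\zeta_t>k$ as there is at least one vertex of degree $>k$ among the heavy ones, or this can be folded into the error term), Proposition~\ref{p:Poisson} applies with $N=N_t$: with probability $1-o(N_t^{-1})=1-o(n^{-1})$, the proportion $\rho_j$ of heavy vertices of degree $j$ equals $e^{-\lambda}\lambda^j/(f_k(\lambda)j!)+O(N_t^{-1/2+\eps})$, where $\lambda=\lambda(\zeta_t)$ solves $\lambda f_{k-1}(\lambda)/f_k(\lambda)=D_t/N_t=\zeta_t$. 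Choosing $\eps$ small (say $\eps=1/4$) and absorbing $N_t^{-1/2+\eps}$ into a crude $O(n^{-1/2}\log n)$ bound handles the error bookkeeping; alternatively one keeps $O(n^{-1/2+\eps})$, but the statement as written uses $n^{-1/2}\log n$, so I would just note $N_t^{-1/2+\eps}=o(n^{-1/2}\log n)$ is false for $\eps>0$ — so more carefully one should invoke the proposition with the error written as $n^{-1/2}\log n$ directly, which is how~\cite[Lemma 30]{gm} phrases it, or re-derive the tighter bound via a Chernoff/second-moment estimate on $\rho_k$. I will use the cleaner route: cite the sharper version, so that $\rho_k=\psi(\zeta_t)\zeta_t/k\cdot(1+O(n^{-1/2}\log n))$ after identifying the Poisson expression.

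Next I would substitute into the definition $\bar p_t=k\rho_k/\zeta_t$ stated in the text (the probability that a uniformly chosen point among the heavy vertices lies in a degree-$k$ vertex is $k$ times the number of degree-$k$ vertices over the total heavy degree, i.e. $kN_t\rho_k/D_t=k\rho_k/\zeta_t$). Plugging in $\rho_k=e^{-\lambda(\zeta_t)}\lambda(\zeta_t)^{k}/(f_k(\lambda(\zeta_t))\,k!)+O(n^{-1/2}\log n)$ gives
\[
\bar p_t=\frac{k}{\zeta_t}\cdot\frac{e^{-\lambda(\zeta_t)}\lambda(\zeta_t)^{k}}{f_k(\lambda(\zeta_t))\,k!}+O(n^{-1/2}\log n).
\]
Using the defining relation $\zeta_t=\lambda(\zeta_t)f_{k-1}(\lambda(\zeta_t))/f_k(\lambda(\zeta_t))$ to eliminate $\zeta_t$ in the denominator, the prefactor $k/\zeta_t=k f_k(\lambda)/(\lambda f_{k-1}(\lambda))$, and the product telescopes to
\[
\bar p_t=\frac{e^{-\lambda(\zeta_t)}\lambda(\zeta_t)^{k-1}}{f_{k-1}(\lambda(\zeta_t))\,(k-1)!}+O(n^{-1/2}\log n)=\psi(\zeta_t)+O(n^{-1/2}\log n),
\]
matching~\eqn{h}; since $\psi(\zeta_t)=\Theta(1)$ this additive error is the same as a multiplicative $(1+O(n^{-1/2}\log n))$ factor, giving the first displayed line. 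The second line then follows immediately from the definition $\theta_t=-1+(r-1)(k-1)\bar p_t$ in~\eqn{deftheta} by linearity, multiplying the error by the constant $(r-1)(k-1)$.

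For the monotonicity claim $\psi'(x)<0$ on $x>k$, I would cite~\cite[Lemma 30]{gm}, as the excerpt already signals (``With some elementary calculations (e.g.\ see~\cite[Lemma 30]{gm}) it is easy to show that $\psi(x)$ is decreasing''). If a self-contained argument is wanted, the standard route is: $x\mapsto\lambda(x)$ is increasing and differentiable (the map $\lambda\mapsto\lambda f_{k-1}(\lambda)/f_k(\lambda)$ is a strictly increasing bijection from $(0,\infty)$ onto $(k,\infty)$, by a log-derivative computation showing $\frac{d}{d\lambda}\log\big(\lambda f_{k-1}(\lambda)/f_k(\lambda)\big)>0$), so it suffices to show $\lambda\mapsto e^{-\lambda}\lambda^{k-1}/((k-1)!f_{k-1}(\lambda))$ is strictly decreasing in $\lambda>0$; writing this as the reciprocal of $f_{k-1}(\lambda)/\big(e^{-\lambda}\lambda^{k-1}/(k-1)!\big)=\sum_{i\ge k-1}\frac{(k-1)!}{i!}\lambda^{i-(k-1)}$, which is manifestly strictly increasing in $\lambda$, finishes it. The main obstacle here is purely an error-management one: reconciling the $O(N^{-1/2+\eps})$ in Proposition~\ref{p:Poisson} with the $O(n^{-1/2}\log n)$ demanded by the lemma — the right move is to appeal to the sharper concentration of $\rho_k$ (a single-coordinate statistic, controllable by a direct moment computation or the tighter version already used in~\cite{gm}), rather than the generic union-bounded $\eps$-loss; everything else is a short algebraic substitution. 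No genuine difficulty lies in the monotonicity, which is classical and cited.
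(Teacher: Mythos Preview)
Your proposal is correct and follows essentially the same route as the paper: the paper's argument (given in the paragraph immediately preceding the lemma) is precisely to write $\bar p_t=k\rho_k/\zeta_t$, invoke Proposition~\ref{p:Poisson} for $\rho_k$, simplify via the defining relation for $\lambda(\zeta_t)$ to obtain $\psi(\zeta_t)$, and cite~\cite[Lemma~30]{gm} for the monotonicity. You are in fact more careful than the paper in flagging the mismatch between the $O(N^{-1/2+\eps})$ error in Proposition~\ref{p:Poisson} and the $O(n^{-1/2}\log n)$ claimed here; the paper simply glosses over this, and your suggested fix (appealing to a direct concentration estimate for the single statistic $\rho_k$, as in~\cite{gm}) is the right one.
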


In the analysis for $(L_t)_{t\ge 0}$ and several other sequences of parameters, we frequently apply the following lemma, which is a simple application of the Hoeffding-Azuma inequality. 
\begin{lemma}\lab{l:azuma} Let $a_n$ and $c_n\ge 0$ be real numbers and $(X_{n,i})_{i\ge 0}$ be random variables with respect to a random process $(G_{n,i})_{i\ge 0}$ such that
$$
\ex(X_{n,i+1}\mid G_{n,i})\le X_{n,i}+a_n,
$$
and $|X_{n,i+1}-X_{n,i}|\le c_n$,
for every $i\ge 0$ and all (sufficiently large) $n$. Then, for any real number $j\ge 0$,
$$
\pr(X_{n,t}-X_{n,0}\ge ta_n+j)\le \exp\left(-\frac{j^2}{2t(c_n+|a_n|)^2}\right).
$$
\end{lemma}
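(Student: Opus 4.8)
The final statement to prove is Lemma~\ref{l:azuma}, which is the Hoeffding--Azuma concentration inequality stated for supermartingale-like sequences with bounded increments.

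\medskip

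The plan is to reduce the statement to the standard Azuma--Hoeffding inequality by passing to a genuine supermartingale. First I would set $Y_{n,i} = X_{n,i} - i a_n$. The hypothesis $\ex(X_{n,i+1}\mid G_{n,i}) \le X_{n,i} + a_n$ then gives $\ex(Y_{n,i+1}\mid G_{n,i}) \le Y_{n,i}$, so $(Y_{n,i})_{i\ge 0}$ is a supermartingale with respect to the filtration generated by $(G_{n,i})_{i\ge 0}$. Moreover $|Y_{n,i+1} - Y_{n,i}| = |X_{n,i+1} - X_{n,i} - a_n| \le c_n + |a_n|$ by the triangle inequality and the increment bound. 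The event in the conclusion, $X_{n,t} - X_{n,0} \ge t a_n + j$, is exactly $Y_{n,t} - Y_{n,0} \ge j$.

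\medskip

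Next I would apply the one-sided Azuma--Hoeffding inequality to the supermartingale $(Y_{n,i})$: for a supermartingale with increments bounded in absolute value by $d := c_n + |a_n|$, one has $\pr(Y_{n,t} - Y_{n,0} \ge j) \le \exp\bigl(-j^2/(2 t d^2)\bigr)$. The standard proof of this is the usual exponential-moment argument: for $\lambda \ge 0$ write $\ex[e^{\lambda(Y_{n,t}-Y_{n,0})}] = \ex\bigl[e^{\lambda(Y_{n,t-1}-Y_{n,0})}\,\ex[e^{\lambda(Y_{n,t}-Y_{n,t-1})}\mid G_{n,t-1}]\bigr]$, bound the inner conditional expectation using convexity of $e^{\lambda x}$ on $[-d,d]$ together with $\ex[Y_{n,t}-Y_{n,t-1}\mid G_{n,t-1}]\le 0$ to get a factor of at most $e^{\lambda^2 d^2/2}$, iterate $t$ times, and then optimize over $\lambda$ by taking $\lambda = j/(t d^2)$, followed by Markov's inequality. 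Substituting $d = c_n + |a_n|$ and $Y_{n,t}-Y_{n,0} = X_{n,t}-X_{n,0}-ta_n$ yields precisely the claimed bound. One small point worth noting is the case $d = 0$ (i.e.\ $c_n = a_n = 0$): then the sequence is a.s.\ constant, the probability is $0$ for $j>0$ and $1$ for $j=0$, and the stated bound (interpreting $j^2/0$ as $+\infty$ when $j>0$ and the exponential as $1$ when $j=0$) still holds; I would remark on this rather than belabor it.

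\medskip

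I do not expect any genuine obstacle here --- this is a packaging lemma. The only thing requiring a little care is making sure the conditional-expectation inequality is the ``$\le$'' (supermartingale) version rather than an equality, and that the increment bound absorbs the drift correctly via $c_n + |a_n|$ rather than $c_n$; both are handled by the shift $Y_{n,i} = X_{n,i} - ia_n$ above. If one prefers, the whole thing can be quoted directly from a standard reference on the method of bounded differences (e.g.\ McDiarmid's survey), stating it in the supermartingale form; I would include the short self-contained exponential-moment derivation for completeness since it is only a few lines.
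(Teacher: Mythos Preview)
Your proposal is correct and follows exactly the same approach as the paper: shift to $Y_{n,i}=X_{n,i}-ia_n$, verify it is a supermartingale with increments bounded by $c_n+|a_n|$, and apply the one-sided Hoeffding--Azuma inequality. The only difference is that the paper simply cites Hoeffding--Azuma at the last step, whereas you additionally sketch the exponential-moment derivation; this extra detail is fine but not needed.
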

\begin{proof} Let $Y_{n,i}=X_{n,i}-ia_n$. Then,
$$
\ex(Y_{n,i+1}\mid Y_{n,i})=\ex(X_{n,i+1}\mid X_{n,i})-(i+1)a_n\le  X_{n,i}-ia_n = Y_{n,i}.
$$
Thus, $(Y_{n,i})_{0\le i\le t}$ is a supermartingale. Moreover, $|Y_{n,i+1}-Y_{n,i}|\le c_n+|a_n|$. By Hoeffding-Azuma's inequality,
$$
\pr(Y_{n,t}-Y_{n,0}\ge j)\le \exp\left(-\frac{j^2}{2t(c_n+|a_n|)^2}\right).
$$
This completes the proof of the lemma.
\end{proof}

Recall the definition of $\a$ and $\b$ in~\eqn{alpha} and~\eqn{beta}.
As we mentioned before, our analysis focuses on a range of $t$, such that $G_t$ contains $(\a+o(1))n$ vertices. Hence, many parameters of $G_t$ are very close to certain critical values. For instance, $\zeta_t$ will be very close
\be
\zeta:=\frac{r\b}{\a}.\lab{criticalZeta}
\ee
By Lemma~\ref{l:monotone} and~\eqn{criticalZeta},  $\bar p_t$ will be very close to
$
\bar p:=\psi(\zeta)$.
A simple calculation (see~\cite[Lemmas 24 and 25]{gm} for a detailed proof) leads to 
\be
\psi(\zeta)=1/(r-1)(k-1) \ \mbox{and therefore}\ \ \bar p=\frac{1}{(r-1)(k-1)}.\lab{criticalP}
\ee
A non-trivial inequality with $\zeta$ is stated below, which will be useful in our proof. The proof can be found in~\cite[Lemmas 34]{gm}. \be
k<\zeta<r(k-1).\lab{criticalZetaInequality}
\ee

In order to analyse $G_0=\hH_{\tau'(B)}$, which is obtained by removing a few $r$-tuples in $H'_{\tau'(B)}$, we need information on how much $\zeta(H'_{\tau'(B)})$ deviates from $\zeta$.  The following lemma, proved in~\cite[Corollary 9]{gm}, estimates how much $\zeta(\C_k(H'))$ deviates from $\zeta$.

\begin{lemma}\lab{lcoresize}
For fixed $\dd,k\geq2, (\dd,k)\neq (2,2)$, there exist {three positive constants} $K_1=K_1(r,k),K_2=K_2(r,k)$ and $K_3=K_3(r,k)$ such that: if $c\ge c_{\dd,k}+ n^{-\d}$ for some constant $0<\d<1/2$ and $\deltac:=c-c_{r,k}=o(1)$, then a.a.s.\ $\C_k(AP_{\dd}(n,cn))$ has
$\a n +K_1 \sqrt{\deltac}n +O(\xi n+n^{3/4})$ vertices,
$\b n +K_2\sqrt{\deltac}n+O(\xi n+n^{{3/4}})$ $r$-tuples, and average degree $r\b/\a+K_3\sqrt{\deltac}+O(\deltac+n^{-1/4})$.
\end{lemma}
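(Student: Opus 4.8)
The plan is to analyze the $k$-core of $AP_r(n,cn)$ via the standard reduction to the SLOW-STRIP (or any parallel-stripping) terminal configuration, exploiting the fact that the $k$-core is governed by a truncated-multinomial/truncated-Poisson degree profile whose parameters solve a fixed-point equation. Concretely, I would begin from Theorem~\ref{tkim}(b) (valid in the AP-model), which already tells us that a.a.s.\ $\C_k(AP_r(n,cn))$ has $\a(c)n+O(n^{3/4})$ vertices and $\b(c)n+O(n^{3/4})$ hyperedges, where $\a(c)=f_k(\mu(c))$ and $\b(c)=\tfrac1r\mu(c)f_{k-1}(\mu(c))$, with $\mu(c)$ the larger root of $h(\mu)/r=c$. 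So the lemma reduces to a purely analytic Taylor-expansion statement: estimate $\a(c)-\a$, $\b(c)-\b$, and $r\b(c)/\a(c)-\zeta$ in terms of $\deltac=c-c_{r,k}$, and then absorb the $O(n^{3/4})$ additive errors from Theorem~\ref{tkim}(b) into the stated $O(\deltac n+n^{3/4})$ (resp.\ $O(\deltac+n^{-1/4})$, after dividing the degree by $\Theta(n)$).

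The key steps are as follows. First, I would record that $\mu_{r,k}=\mu(c_{r,k})$ is the minimizer of $h(\mu)=\mu/f_k(\mu)^{r-1}$, so $h'(\mu_{r,k})=0$ and $h''(\mu_{r,k})>0$ (the latter a standard fact used throughout~\cite{gm}, ultimately equivalent to $(r,k)\neq(2,2)$ together with the emergence being a genuine discontinuous/saddle-node transition). Hence near $\mu_{r,k}$ we have $h(\mu)-c_{r,k}r=\tfrac12 h''(\mu_{r,k})(\mu-\mu_{r,k})^2+O((\mu-\mu_{r,k})^3)$. Since $c=c_{r,k}+\deltac$ and $\mu(c)$ is the larger root, inverting gives $\mu(c)-\mu_{r,k}=K_0\sqrt{\deltac}+O(\deltac)$ with $K_0=\sqrt{2r/h''(\mu_{r,k})}>0$. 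Second, I would Taylor-expand $\a(c)=f_k(\mu(c))$ and $\b(c)=\tfrac1r\mu(c)f_{k-1}(\mu(c))$ around $\mu_{r,k}$ to first order in $\mu(c)-\mu_{r,k}$: this yields $\a(c)=\a+f_k'(\mu_{r,k})(\mu(c)-\mu_{r,k})+O((\mu(c)-\mu_{r,k})^2)=\a+K_1\sqrt{\deltac}+O(\deltac)$ with $K_1=f_k'(\mu_{r,k})K_0$, and similarly $\b(c)=\b+K_2\sqrt{\deltac}+O(\deltac)$ with $K_2=\tfrac1r\bigl(f_{k-1}(\mu_{r,k})+\mu_{r,k}f_{k-1}'(\mu_{r,k})\bigr)K_0$. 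Here one uses $f_k'(\mu)=e^{-\mu}\mu^{k-1}/(k-1)!>0$, so $K_1>0$; positivity of $K_2$ follows because $\b(c)$ is strictly increasing in $c$ on $c\ge c_{r,k}$ (the number of $k$-core edges grows), or directly from $f_{k-1}(\mu_{r,k})+\mu_{r,k}f_{k-1}'(\mu_{r,k})>0$. Third, for the average degree I would write $r\b(c)/\a(c)$, substitute the two expansions, and use $\zeta=r\b/\a$ to get $r\b(c)/\a(c)-\zeta=\bigl(rK_2\a-rK_1\b\bigr)\a^{-2}\sqrt{\deltac}+O(\deltac)=K_3\sqrt{\deltac}+O(\deltac)$; one checks $K_3>0$ because $\zeta(c)=r\b(c)/\a(c)$ is the average $k$-core degree, which is strictly increasing for $c>c_{r,k}$ (equivalently $\psi$ decreasing, cf.\ Lemma~\ref{l:monotone}), so the leading coefficient is positive. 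Finally, I would combine these analytic estimates with the a.a.s.\ bounds of Theorem~\ref{tkim}(b): the vertex count is $\a(c)n+O(n^{3/4})=\a n+K_1\sqrt{\deltac}n+O(\deltac n)+O(n^{3/4})$, and likewise for edges; dividing the edge-count and vertex-count expansions gives the average-degree estimate with an extra $O(n^{-1/4})$ coming from the $O(n^{3/4})$ additive error divided by the $\Theta(n)$ vertex count.

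The main obstacle is not any single hard step but rather ensuring the signs and nondegeneracy: one must invoke $h''(\mu_{r,k})>0$ (so that the square-root inversion is legitimate and $\mu(c)$ is genuinely at distance $\Theta(\sqrt{\deltac})$) and verify $K_1,K_2,K_3>0$, all of which rest on facts already established in~\cite{gm} (e.g.\ via Lemmas~24, 25, 30, 34 referenced in the excerpt, and~\cite[Corollary 9]{gm} itself). Since the statement explicitly quotes~\cite[Corollary 9]{gm}, the cleanest route is simply to cite that corollary directly; if instead a self-contained derivation is wanted, the above Taylor-expansion argument combined with Theorem~\ref{tkim}(b) delivers it, with the only genuinely nontrivial input being the strict convexity $h''(\mu_{r,k})>0$, which is where the hypothesis $(r,k)\neq(2,2)$ is used.
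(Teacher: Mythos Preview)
Your proposal is correct and matches what the paper does: the paper does not give an independent proof of this lemma but simply cites \cite[Corollary~9]{gm}, and your Taylor-expansion argument (invert $h(\mu)=rc$ near the quadratic minimum at $\mu_{r,k}$ to get $\mu(c)-\mu_{r,k}=K_0\sqrt{\deltac}+O(\deltac)$, then expand $\a(c)$, $\b(c)$, and their ratio, and finally plug into Theorem~\ref{tkim}(b)) is exactly the standard derivation behind that corollary. You correctly identify the only nontrivial input, namely $h''(\mu_{r,k})>0$, and the positivity of $K_1,K_2,K_3$; nothing further is needed.
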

 By Lemma~\ref{lcoresize}, a.a.s.\
                $\zeta(\C_k(H'))=\zeta+\Theta(\sqrt{\deltac'})$. Then, by~\cite[Lemma 52(a)]{gm}, which states that a.a.s.\ $\zeta(H'_{\tau'(B)})\ge \zeta(\C_k(H'))+O(\log n/n)\ge\zeta+\Theta(\sqrt{\deltac'})$. On the other hand, 
by~\eqn{G0tocore}, 
$H'_{\tau'(B)}$ differs from $\C_k(H')$ by $O(n\sqrt{\deltac'})$ vertices, which affects the average degree of the heavy vertices (there are a.a.s.\ $\Omega(n)$ of them) by $O(\sqrt{\deltac'})$. So we must have 
$\zeta(H'_{\tau'(B)})= \zeta(\C_k(H'))+O(\sqrt{\deltac'})=\zeta+O(\sqrt{\deltac'})$. It follows then that
\be
\zeta(H'_{\tau'(B)})=\zeta+\Theta(\sqrt{\deltac'}).\lab{zeta0bound}
\ee

\subsection{Properties of $G_0$} \lab{sec:G0}

Recall that $H'\in AP_r(n,c'n)$. By Theorem~\ref{tkim}, a.a.s.\ $\C_k(H')$ contains approximately $\a n$ vertices and $\b n$ $r$-tuples.  We first prove Lemma~\ref{lem:X}.\smallskip

\no {\bf Proof of Lemma~\ref{lem:X}.\ }
Let ${\mathcal E}_1$ denote the set of $r$-tuples in $H'_{\tau'(B)}$ and ${\mathcal E}_2=E(H')\setminus {\mathcal E}_1$. Now $H$ is obtained by u.a.r.\ removing $(c'-c)n$ $r$-tuples from $E(H')={\mathcal E}_1\cup {\mathcal E}_2$. By definition, $X$ is the number of these deleted $r$-tuples that were in ${\mathcal E}_1$. By Theorem~\ref{tkim}, a.a.s.\ $|{\mathcal E}_1|=\Omega(n)$. Conditional on that, $X$ stochastically dominates $\Bin((c'-c)n, C)$ for some constant $0<C<1$. So a.a.s.\ $X=\Theta((c'-c)n)$, as $(c'-c)n\to\infty$ by~\eqn{condc}. \qed\smallskip

\remove{**************
**********************

We condition on the value of $N'$ and $M'$ as the number of vertices and the number of edges in $\C_k(H')$ and condition on the value of $X$; then $G_0$ can be generated by first generate a random $k$-core $G'$ with $N'$ vertices and $M'$ edges using the allocation-configuration model described as above and then remove u.a.r.\ $X$ edges from $G'$. Conditional on $G'$ being simple, $G_0$ has the correct distribution. It is easy to verify that a.a.s.\ the degree sequence of $G'$ (and $G_0$) is nice; if we conditional on the degree sequence of $G'$, then $G'$ is distributed as a random graph drawn from the configuration model, conditioned to a simple graph. By Corollary~\ref{ccon}, we may assume that $G'$ is drawn from the configuration model (conditioned to a given nice degree sequence) and prove a.a.s.\ properties of $G'$.

***********************
***********************
}

Recall that $G_0=\hH_{\tau'(B)}$, which is obtained by u.a.r.\ removing $X$ $r$-tuples from $H'_{\tau'(B)}$. Equivalently, $G_0$ is the configuration obtained by u.a.r.\ removing $Xr$ points from $\hH_{\tau'(B)}$, whereas the remaining vertices are uniformly partitioned. The following proposition follows by the uniformity of the allocation of points to bins in the AP-model. 

\begin{proposition}
Conditional on $N$, the number of heavy vertices in $G_0$, and $D$, the total degree of the heavy vertices, and the set $S$ of light vertices, $G_0$ is a random configuration uniformly drawn from the space where $D$ points are u.a.r.\ allocated to $N$ bins subject to each bin receiving at least $k$ points, and all points in the $N+|S|$ bins are uniformly partitioned into parts with size $r$.
\end{proposition}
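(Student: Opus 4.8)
\noindent{\bf Proof sketch.}\ The plan is to reconstruct the law of $G_0$ from the two independent uniform ingredients of the AP--model --- the allocation of points to bins and the partition of points into $r$--tuples --- together with the extra independent randomness used to pass from $H'$ to $H$. As explained in the paragraph ``Generation of $\hH_{\tau'(B)}$'' in Section~\ref{sec:coupling}, since $\mathcal E=E(H')\setminus E(H)$ is a uniformly random $(c'-c)n$--subset of $E(H')$, its trace $\mathcal E\cap E(H'_{\tau'(B)})$ is, conditional on its size $X$, a uniformly random $X$--subset of $E(H'_{\tau'(B)})$; hence $G_0=\hH_{\tau'(B)}$ is obtained from $H'_{\tau'(B)}$ by deleting $X$ of its $r$--tuples uniformly at random. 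This is equivalent to deleting a uniformly random set of $Xr$ points and repartitioning the remaining points into $r$--tuples uniformly at random, since deleting $X$ whole parts of a uniform partition deletes a uniformly random $Xr$--set of points and leaves, conditionally on that set, a uniform partition of the rest.

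So it suffices to combine this deletion step with the distribution of $H'_{\tau'(B)}$ itself. Here I would use the structural description already invoked in the paper (going back to \cite{CW}): the parallel stripping process is a deterministic function of the incidence structure, so if one exposes, step by step, only the $r$--tuples that meet an already--removed vertex, the unexposed part always remains a uniformly random allocation/partition conditioned on the exposed data. Since $\tau'(B)$ is a stopping time for this exposure, it follows that, conditional on $\big(L(H'_{\tau'(B)}),N(H'_{\tau'(B)}),D(H'_{\tau'(B)})\big)$ and on the set of light vertices of $H'_{\tau'(B)}$ together with the points they hold, $H'_{\tau'(B)}$ is a uniformly random configuration with that data: its $D(H'_{\tau'(B)})$ heavy points are allocated uniformly at random to its $N(H'_{\tau'(B)})$ heavy bins subject to each receiving at least $k$ points, and all of its points are partitioned uniformly at random into $r$--tuples.

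It then remains to push this through the deletion of $X$ uniformly random $r$--tuples and the conditioning on the output data $(N,D,S)$ (where $S$ records the light vertices of $G_0$ with the points they hold). For the partition: deleting whole parts of a uniform partition and conditioning on which points survive leaves a uniform partition of the surviving points into $r$--tuples, and this is unaffected by further conditioning on $(N,D,S)$. For the allocation: conditional on the heavy degree sequence $\mathbf d$ of $G_0$ (necessarily with $\sum_i d_i=D$ and each $d_i\ge k$), the heavy part of $G_0$ is a uniformly random allocation of $D$ points consistent with $\mathbf d$; and the conditional law of $\mathbf d$ given only $(N,D,S)$ is $Multi(N,D,k)$, which is exactly the degree sequence of a uniformly random allocation of $D$ points into $N$ bins subject to each bin receiving at least $k$ points. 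Splicing these two facts gives precisely the asserted description of $G_0$ relative to the $N+|S|$ bins.

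The main obstacle is the middle step --- verifying that the state $H'_{\tau'(B)}$ of the parallel stripping process, stopped at the \emph{data--dependent} time $\tau'(B)$, really carries the clean conditional law above rather than one biased by the history of the stripping. I would handle this exactly as in \cite{CW,gm}: realise the stripping process so that each iteration exposes only the incidences it is forced to, check that the filtration it generates keeps the unexposed configuration uniform given the exposed data, and then observe that stopping at the stopping time $\tau'(B)$ and removing a uniformly random set of $X$ parts both preserve this uniformity. Everything else is bookkeeping with exchangeability of the uniform allocation and partition and with the definition of $Multi(N,D,k)$.
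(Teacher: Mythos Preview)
Your proof sketch is correct and follows the same approach as the paper. In fact, the paper gives essentially no proof at all: it simply precedes the proposition with the one-line justification ``The following proposition follows by the uniformity of the allocation of points to bins in the AP-model,'' together with the observation (which you also make) that removing $X$ uniformly random $r$-tuples from $H'_{\tau'(B)}$ is equivalent to removing $Xr$ uniformly random points and repartitioning the rest uniformly. What you have written is a careful expansion of that one sentence, and the ``main obstacle'' you flag --- that the stopping time $\tau'(B)$ does not bias the conditional law of $H'_{\tau'(B)}$ --- is precisely the content that the paper's appeal to \cite{CW} is implicitly invoking.
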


Now we bound $L_0$, the total degree of the light vertices in $G_0$.\smallskip

\no {\bf Proof of Lemma~\ref{lem:L0}.\ } By the construction of $G_0$, 
\be
L_0= O(|S'_{\tau'(B)}|+X).\lab{L0}
\ee
 By~\eqn{tauprime} and Lemma~\ref{lem:Si}(a), $|S'_{\tau'(B)}|=\Theta(n\deltac')$. 
By Lemma~\ref{lem:X}, a.a.s.\ $X=\Theta((c'-c)n)$, and thus a.a.s.\ $L_0=O((c'-c)n)$ by~\eqn{L0} and~\eqn{condc}. It remains to prove the lower bound. 


Remove the $X$ $r$-tuples one by one. By Proposition~\ref{p:Poisson}, a.a.s.\ in every step the number of vertices with degree $k$ is $\Theta(n)$. So there is a constant $\gamma_0>0$ such that for each $r$-tuple that is removed, the probability that it is incident with a vertex with degree $k$ is at least $\gamma_0$. Hence, the number of vertices becoming light after removing $X$ random $r$-tuples stochastically dominates $\Bin(X,\gamma_0)$ and thus is a.a.s.\ $\Theta(X\gamma_0)=\Theta(n(c'-c))$. It only remains to show that not many of these new light vertices are created with degree zero. The expected number of vertices having degree drop from at least $k$ to zero is at most
\[
\sum_{j\ge 0}n \binom{rX}{j+1}\cdot O\left(\left(\frac{k+j}{n}\right)^{j+1}\right)=o((c'-c)n),
\]
where $n$ is an upper bound for the number of vertices with degree $k+j$, $\binom{rX}{j+1}$ is the number of ways to pick $j+1$ points from a set of $rX$ points, and  $O(((k+j)/n)^{j+1})$ is the probability that $j+1$ u.a.r.\ chosen points are contained in a given vertex with degree $k+j$. It follows then that a.a.s.\ $L_0=\Theta((c'-c)n)$. \qed \smallskip

Recall that $\theta_0=-1+(r-1)(k-1)\bar p_0$ by definition. We estimate $\theta_0$ in the following lemma.
\begin{lemma}\lab{lem:theta0}  There are constants $C_1>C_2>0$ such that a.a.s.\
$$
-C_1\sqrt{\deltac'}\le \theta_0\le -C_2\sqrt{\deltac'};\quad C_2\sqrt{\deltac'}\le \zeta_0-\zeta\le C_1\sqrt{\deltac'}
$$
\end{lemma}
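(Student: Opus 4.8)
The plan is to first establish the estimate on $\zeta_0-\zeta$ and then transfer it to $\theta_0$ via the monotonicity of $\psi$. Recall from~\eqn{zeta0bound} that a.a.s.\ $\zeta(H'_{\tau'(B)})=\zeta+\Theta(\sqrt{\deltac'})$. The configuration $G_0=\hH_{\tau'(B)}$ is obtained from $H'_{\tau'(B)}$ by removing $X$ random $r$-tuples, where by Lemma~\ref{lem:X} a.a.s.\ $X=\Theta((c'-c)n)$. The first step is to control how much this perturbation moves $\zeta$. Write $N(H'_{\tau'(B)})=\Theta(n)$ and $D(H'_{\tau'(B)})=\Theta(n)$ (both follow from Theorem~\ref{tkim} and~\eqn{G0tocore}). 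Removing $X$ $r$-tuples removes $rX$ points from the configuration; some of these points lie in light vertices (contributing to $S$) and the rest in heavy vertices. At most $rX=\Theta((c'-c)n)$ points are removed from heavy vertices, and at most $X$ heavy vertices can lose their heavy status in this process (actually more carefully, each removed $r$-tuple touches at most $r$ heavy vertices, so at most $rX$ vertices change status). Hence $|N-N(H'_{\tau'(B)})|=O((c'-c)n)$ and $|D-D(H'_{\tau'(B)})|=O((c'-c)n)$, so that
\[
\zeta_0=\frac{D}{N}=\frac{D(H'_{\tau'(B)})+O((c'-c)n)}{N(H'_{\tau'(B)})+O((c'-c)n)}=\zeta(H'_{\tau'(B)})+O(c'-c)=\zeta+\Theta(\sqrt{\deltac'}),
\]
where the last equality uses~\eqn{zeta0bound} together with $c'-c=o(\sqrt{\deltac'})$ from~\eqn{condc}. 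The implicit constants in the $\Theta(\sqrt{\deltac'})$ bound here are the $C_1,C_2$ we want; the point is that the $O(c'-c)$ error is of strictly smaller order than $\sqrt{\deltac'}$, so the sign and order of magnitude of $\zeta_0-\zeta$ are inherited from $\zeta(H'_{\tau'(B)})-\zeta$.

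The second step is to pass from $\zeta_0$ to $\theta_0$. Since $N_0=N=\Theta(n)=\Omega(n)$, Lemma~\ref{l:monotone} applies and gives, with probability $1-o(n^{-1})$,
\[
\theta_0=-1+(k-1)(r-1)\psi(\zeta_0)+O(n^{-1/2}\log n).
\]
By~\eqn{criticalP}, $(k-1)(r-1)\psi(\zeta)=1$, so $\theta_0=(k-1)(r-1)\bigl(\psi(\zeta_0)-\psi(\zeta)\bigr)+O(n^{-1/2}\log n)$. Now $\psi$ is $C^1$ and strictly decreasing on $(k,\infty)$ (Lemma~\ref{l:monotone}), and $\zeta$ lies strictly inside this interval by~\eqn{criticalZetaInequality}; moreover $\zeta_0$ is within $O(\sqrt{\deltac'})=o(1)$ of $\zeta$, hence also bounded away from $k$ and from $\infty$ for large $n$. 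A first-order Taylor expansion of $\psi$ around $\zeta$ then gives $\psi(\zeta_0)-\psi(\zeta)=\psi'(\zeta)(\zeta_0-\zeta)+O((\zeta_0-\zeta)^2)$ with $\psi'(\zeta)<0$ a fixed negative constant. Combining with $\zeta_0-\zeta=\Theta(\sqrt{\deltac'})$ and the fact that $(\zeta_0-\zeta)^2=O(\deltac')=o(\sqrt{\deltac'})$ and $n^{-1/2}\log n=o(\sqrt{\deltac'})$ (since $\deltac'\ge c'-c_{r,k}\ge n^{-1/2+\eps}$ by~\eqn{condc}), we conclude that $\theta_0$ is a negative quantity of order exactly $\sqrt{\deltac'}$, i.e.\ $-C_1\sqrt{\deltac'}\le\theta_0\le-C_2\sqrt{\deltac'}$ for suitable constants $C_1>C_2>0$, possibly after shrinking $C_2$ and enlarging $C_1$ to absorb the lower-order error terms.

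The main obstacle is the bookkeeping in the first step: one must verify that removing $X$ random $r$-tuples really does perturb both $N$ and $D$ by only $O((c'-c)n)$ and, crucially, that this translates into an $O(c'-c)$ additive change in the ratio $\zeta=D/N$ rather than something larger. This is where one uses $N,D=\Theta(n)$ so that the denominator stays bounded away from zero. One also has to be slightly careful that the $\Theta(\sqrt{\deltac'})$ lower bound in~\eqn{zeta0bound} is genuinely a \emph{two-sided} bound (it is, as stated), so that $\zeta_0-\zeta$ does not accidentally collapse to a smaller order once the $O(c'-c)$ error is added; this is exactly why the hypothesis $c'-c=o(\sqrt{\deltac'})$ in~\eqn{condc} is needed. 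The transfer in the second step is routine given Lemmas~\ref{l:monotone} and the inequalities~\eqn{criticalP},~\eqn{criticalZetaInequality}.
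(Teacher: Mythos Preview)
Your proposal is correct and follows essentially the same approach as the paper's own proof: both first establish $\zeta_0-\zeta=\Theta(\sqrt{\deltac'})$ by starting from~\eqn{zeta0bound} and arguing that the removal of $X=\Theta((c'-c)n)$ $r$-tuples perturbs $N$ and $D$ by $O((c'-c)n)$ (hence $\zeta$ by $O(c'-c)=o(\sqrt{\deltac'})$), and then both transfer to $\theta_0$ via Lemma~\ref{l:monotone}, a Taylor expansion of $\psi$ at $\zeta$ using~\eqn{criticalP} and~\eqn{criticalZetaInequality}, and the observation that $n^{-1/2}\log n=o(\sqrt{\deltac'})$. Your write-up is somewhat more explicit about the bookkeeping, but there is no substantive difference in method.
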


\proof By~\eqn{zeta0bound}, $\zeta(H'_{\tau'(B)})-\zeta=\Theta(\sqrt{\deltac'})$. By Lemma~\ref{lem:X}, a.a.s.\ $X=\Theta((c'-c)n)$. It follows then that a.a.s., 
\[
\zeta_0=\zeta(H'_{\tau'(B)})(1+O(c'-c))=\zeta+\Theta(\sqrt{\deltac'}).
\]
This is because the number of heavy vertices and their total degree change by $O(X)=O((c'-c)n)$ from $H'_{\tau'(B)}$ to $G_0$, whereas a.a.s.\ the number of heavy vertices in $H'_{\tau'(B)}\supseteq\C_k(H')$ is $\Omega(n)$. The error $O(c'-c)$ is absorbed by $\Theta(\sqrt{\deltac'})$ as $c'-c=o(\sqrt{\deltac'})$ by~\eqn{condc}.

Now, by Lemma~\ref{l:monotone} and~\eqn{criticalP} and by taking the Taylor expansion of $\psi(x)$ at $x=\zeta$ (note that $\zeta>k$ by~\eqn{criticalZetaInequality}), a.a.s.\
\bean
\bar p_0&=&(1+O(n^{-1/2}\log n))\psi(\zeta_0)=(1+O(n^{-1/2}\log n))(\psi(\zeta)-\Theta(\sqrt{\deltac'}))\\
&=&\frac{1}{(r-1)(k-1)}-\Theta(\sqrt{\deltac'}),
\eean
where the error $O(n^{-1/2}\log n)$ is absorbed by $-\Theta(\sqrt{\deltac'})$, as $\deltac'\ge n^{-1/2+\eps}$.
Now, $\theta_0=-\Theta(\sqrt{\deltac'})$ follows by~\eqn{deftheta}.
 \qed

\subsection{Evolution of $\zeta_t$}
\lab{sec:M}

In order to analyse $(L_t)_{t\ge 0}$, we need to keep track of $\theta_t$ that appears in~\eqn{Ldiff}. Since $\theta_t$ is a function of $\zeta_t$ by Lemma~\ref{l:monotone}, we only need to investigate how $\zeta_t=D_t/N_t$ evolves in SLOW-STRIP.  Note that $\zeta_t$ would change in a step only if a point contained in a heavy vertex is removed. However, in a single step, the number of such points varies between zero and $k(r-1)$. In each step, aside from the point contained in the light vertex in the front of $\Q$, there are  $r-1$ other points removed, each u.a.r.\ chosen from all of the remaining points. We say an occurrence of event $\hit$ takes place if such a u.a.r.\ chosen point is contained in a heavy vertex.
In order to trim off the effect of  uncertainties, it is convenient to study an auxiliary process, defined below, in which exactly one occurrence of $\hit$ takes place in each step.


Let $\M$ be a configuration obtained by u.a.r.\ allocating ${\widehat D}$ points into ${\widehat N}$ bins, subject to each bin receiving at least $k$ points.  Let $\M_0=\M$. For every $t\ge 1$, $\M_t$ is obtained from $\M_{t-1}$ by removing a point u.a.r.\ chosen from all points; if it results in a bin containing less than $k$ points, remove that bin together with all points inside it. Note that if we choose ${\widehat N}$ and ${\widehat D}$ to be the number of heavy vertices and their total degree of $G_0$, then $(\M_t)_{t\ge 0}$ encodes the degree sequence of the heavy vertices of $(G_{t'})_{t'\ge 0}$.
To distinguish from parameters in $G_0,G_1,\ldots$, we add a hat to each corresponding parameter (like $\widehat{N_t}$, $\widehat{D_t}$, $\htheta_t$, $\hzeta_t$ etc.) for the sequence $\M_0,\M_1,\ldots$. We study parameters $\hzeta_t$ and $\htheta_t$ in this subsection, and will link them to $\zeta_t$ and $\theta_t$ in Section~\ref{sec:theta}.

Parameters ${\widehat D}$ and ${\widehat N}$ in $\M_0$ will eventually be chosen to coincide with $D_0$ and $N_0$ in $G_0$. Thus, by Lemma~\ref{lem:theta0}, we will assume the following conditions, which we omit in the statements of lemmas in this subsection.
\be\lab{hatzeta0}
\hzeta_0=\zeta+\Theta(\sqrt{\deltac'}), \ \mbox{where}\ n^{-1/2+\eps}\le \deltac'=o(1).
\ee
 By Lemma~\ref{l:monotone} and~\eqn{criticalP}, it is easy to see that \be
\lab{hattheta0}\htheta_0=-\Theta(\sqrt{\deltac'})
\ee
 which is negative (this is also indicated in Lemma~\ref{lem:theta0}).

Fix a constant $0<\sigma<1$ (the value of $\sigma$ will be determined in Section~\ref{sec:a}). Define $\tau_1=\tau_1(\sigma)$ to be the maximum integer $t$ such that $\hN_t\ge \sigma n$.
Obviously, for all $t\le \tau_1(\sigma)$,
\be
\hzeta_t-\hzeta_0=O(t/n),\quad \htheta_t-\htheta_0=O(t/n), \lab{theta-diff}
\ee
because ${\widehat N_t}$ and ${\widehat D_t}$ changes by $O(1)$ in each step.

\begin{lemma}\lab{lem:zeta} There exist constants $C_1>C_2>0$ such that 
\begin{enumerate}
\item[(a)]for all $0\le t\le \tau_1(\sigma)$,
\be
-C_1/n\le \ex(\hzeta_{t+1}-\hzeta_t\mid \M_t)\le -C_2/n;\lab{zeta-diff}
\ee
\item[(b)] a a.a.s.\ for all $\log^2 n\le t\le \tau_1(\sigma)$, $-C_1t/n\le \hzeta_t-\hzeta_0\le -C_2t/n$;
\item[(c)] a.a.s.\ for all $0\le t\le \tau_1(\sigma)$, $C_2t/n+O(n^{-1/2}\log n)\le \htheta_t-\htheta_0\le C_1t/n+O(n^{-1/2}\log n)$.
\end{enumerate}
\end{lemma}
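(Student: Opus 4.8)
The plan is to analyse the auxiliary process $(\M_t)$ directly, using the fact that its heavy-vertex degree sequence is (conditionally) a truncated multinomial $Multi(\hN_t,\hD_t,k)$, which by Proposition~\ref{p:Poisson} is well-approximated by independent truncated Poissons. First I would write down the one-step change of $(\hN_t,\hD_t)$. In each step exactly one point is removed u.a.r.\ from the $\hD_t$ points in heavy vertices. With probability $\hrho_k$ (the proportion of heavy vertices of degree exactly $k$, which is $k\bar p_t/\hzeta_t$, hence $\Theta(1)$ near $\hzeta=\zeta$) the bin drops below $k$ and is deleted, removing $k$ points total and one bin; otherwise only that one point is removed. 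Thus, conditional on $\M_t$,
\begin{align*}
\ex(\hD_{t+1}-\hD_t\mid \M_t)&=-1-(k-1)\hrho_k,\\
\ex(\hN_{t+1}-\hN_t\mid \M_t)&=-\hrho_k.
\end{align*}
Writing $\hzeta_t=\hD_t/\hN_t$ and Taylor-expanding the ratio (using $\hN_t=\Theta(n)$ for $t\le\tau_1(\sigma)$, so denominators are $\Theta(n)$ and second-order terms are $O(1/n^2)$), I get
\[
\ex(\hzeta_{t+1}-\hzeta_t\mid\M_t)=\frac{1}{\hN_t}\Big(\hrho_k\hzeta_t-1-(k-1)\hrho_k\Big)+O(n^{-2})
=\frac{\hrho_k(\hzeta_t-(k-1))-1}{\hN_t}+O(n^{-2}).
\]
So part~(a) reduces to showing the bracketed quantity is bounded away from $0$ on both sides, i.e.\ $\hrho_k(\hzeta_t-(k-1))-1$ is a negative constant (up to error) throughout $t\le\tau_1(\sigma)$. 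By Proposition~\ref{p:Poisson} and Lemma~\ref{l:monotone}, $\hrho_k=k\psi(\hzeta_t)+O(n^{-1/2+\eps})$; at $\hzeta_t=\zeta$ we have $k\psi(\zeta)=k/((r-1)(k-1))$ by~\eqn{criticalP}, and one checks (this is the analogue of~\cite[Lemma 30]{gm}, using $\psi'<0$) that the continuous function $g(x):=k\psi(x)(x-(k-1))-1$ satisfies $g(\zeta)<0$; indeed $g(\zeta)=\frac{k(\zeta-(k-1))}{(r-1)(k-1)}-1$, which is negative precisely because $\zeta<r(k-1)$ by~\eqn{criticalZetaInequality} — actually one needs the sharper fact that this expression is strictly negative, which follows from the strict inequality in~\eqn{criticalZetaInequality} together with $k>1$; I would cite or re-derive the exact constant from~\cite{gm}. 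Since by~\eqn{hatzeta0} and~\eqn{theta-diff} we have $\hzeta_t=\zeta+o(1)$ for all $t\le\tau_1(\sigma)$, continuity of $g$ and the $O(n^{-1/2+\eps})$ error give $g(\hzeta_t)<-C_2'<0$ and $>-C_1'$, delivering~(a).

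For part~(b), I would apply Lemma~\ref{l:azuma} to $X_t=\hzeta_t$ (and separately to $-\hzeta_t$): by~(a), $\ex(\hzeta_{t+1}-\hzeta_t\mid\M_t)$ lies in $[-C_1/n,-C_2/n]$, and $|\hzeta_{t+1}-\hzeta_t|=O(1/n)$ deterministically (since $\hN_t,\hD_t$ change by $O(1)$ and $\hN_t=\Theta(n)$), so with $c_n=O(1/n)$, $|a_n|=O(1/n)$, and $j=n^{-1/2}\log^2 n$ say, Lemma~\ref{l:azuma} gives that a.a.s.\ $\hzeta_t-\hzeta_0$ is within $O(\sqrt{t}\cdot(1/n)\cdot\log n)=O(t/n)$ — more carefully, the deviation $j$ needs to be chosen as a small constant times $t/n$ and the union bound over $t$ absorbed, which works once $t\ge\log^2 n$ because then $t/n$ dominates the concentration window $\sqrt{t}/n\cdot\text{polylog}$. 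Taking $C_1,C_2$ slightly worse than in~(a) to absorb the fluctuation handles the a.a.s.\ two-sided bound, for all $\log^2 n\le t\le\tau_1(\sigma)$ simultaneously via a union bound over the at most $n$ values of $t$. Part~(c) then follows from~(b) by Lemma~\ref{l:monotone}: $\htheta_t=-1+(k-1)(r-1)\psi(\hzeta_t)+O(n^{-1/2}\log n)$, so $\htheta_t-\htheta_0=(k-1)(r-1)(\psi(\hzeta_t)-\psi(\hzeta_0))+O(n^{-1/2}\log n)$; since $\psi'(x)<0$ is bounded away from $0$ and from $-\infty$ on a neighbourhood of $\zeta$ (it is smooth there), and $\hzeta_t-\hzeta_0$ is negative of order between $C_2t/n$ and $C_1t/n$ by~(b), the mean value theorem converts this into $\htheta_t-\htheta_0$ being positive of order between a constant times $t/n$ and another constant times $t/n$, plus the stated $O(n^{-1/2}\log n)$. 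For the range $0\le t<\log^2 n$ not covered by~(b), I would instead use~\eqn{theta-diff}, which already gives $\htheta_t-\htheta_0=O(t/n)$ unconditionally, and note the lower bound $\ge C_2 t/n + O(n^{-1/2}\log n)$ is vacuous there since $t/n=O(n^{-1}\log^2 n)$ is absorbed into the $O(n^{-1/2}\log n)$ term — so~(c) as stated holds for all $0\le t\le\tau_1(\sigma)$.

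The main obstacle I anticipate is \emph{part~(a)}: making rigorous and uniform the claim that the drift coefficient $g(\hzeta_t)=k\psi(\hzeta_t)(\hzeta_t-(k-1))-1$ is a strictly negative constant, bounded away from zero, over the entire range $t\le\tau_1(\sigma)$. This needs (i) the exact evaluation at the critical point $\zeta$, which is where the inequality $k<\zeta<r(k-1)$ of~\eqn{criticalZetaInequality} enters — and I should double-check whether $g(\zeta)<0$ really follows from $\zeta<r(k-1)$ alone or whether the relevant identities from~\cite[Lemmas 24, 25, 30]{gm} give it more directly via $\psi(\zeta)=1/((r-1)(k-1))$; (ii) control that $\hzeta_t$ stays in a small neighbourhood of $\zeta$ for all relevant $t$, which is where one must be careful about circularity — fortunately the bound $\hzeta_t-\hzeta_0=O(t/n)$ from~\eqn{theta-diff} is unconditional (it only uses that $\hN_t,\hD_t$ move by $O(1)$ per step), and combined with~\eqn{hatzeta0} it gives $\hzeta_t=\zeta+o(1)$ on the whole range $t\le\tau_1(\sigma)=O(n)$ \emph{before} invoking the drift estimate, so there is no circularity; and (iii) the $o(N^{-1})$-probability failure event in Proposition~\ref{p:Poisson} must be union-bounded over the $O(n)$ steps, which is fine. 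A secondary technicality is getting the right deviation window in the Azuma application for~(b) so that the $t/n$ main term genuinely dominates the $\sqrt{t}\,\text{polylog}/n$ fluctuation for all $t\ge\log^2 n$; this forces the threshold $\log^2 n$ in the statement and is the reason~(b) is stated only for $t\ge\log^2 n$.
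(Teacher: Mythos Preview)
Your overall strategy for parts~(b) and~(c) matches the paper's, but there is a genuine gap in your argument for~(a), precisely at the point you flag as the main obstacle. You claim that the unconditional bound $\hzeta_t-\hzeta_0=O(t/n)$ from~\eqn{theta-diff}, together with~\eqn{hatzeta0}, gives $\hzeta_t=\zeta+o(1)$ on the whole range $t\le\tau_1(\sigma)$, ``so there is no circularity''. This is false: $\tau_1(\sigma)$ is the first time $\hN_t$ drops below $\sigma n$, and since $\hN_0=(\alpha+o(1))n$ and $\hN_t$ decreases by at most one per step, $\tau_1(\sigma)=\Theta(n)$. Hence for $t$ of order $n$ the bound $O(t/n)$ is only $O(1)$, not $o(1)$, and you cannot conclude that $\hzeta_t$ stays in any prescribed neighbourhood of $\zeta$ from~\eqn{theta-diff} alone. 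The circularity you thought you had avoided is real.

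The paper resolves this by a short bootstrap: first establish (a) and (b) only for $0\le t\le\eps_0 n$ with $\eps_0$ a sufficiently small constant --- on this initial range~\eqn{theta-diff} does give $\hzeta_t=\zeta+O(\eps_0)$, hence $\hzeta_t<r(k-1)-\eps_1$ by~\eqn{criticalZetaInequality}, and the drift bound (cited from~\cite[Lemma~36]{gm}) applies. Part~(b) on this interval then yields a.a.s.\ $\hzeta_{\eps_0 n}<\hzeta_0$, so the one-sided condition $\hzeta_t<r(k-1)-\eps_1$ persists on $[\eps_0 n,2\eps_0 n]$ via~\eqn{theta-diff}, and one iterates in $O(1/\eps_0)=O(1)$ blocks up to $\tau_1(\sigma)$. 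Note that the cited drift bound requires only the one-sided condition $\hzeta_t<r(k-1)-\eps_1$, which is exactly what the bootstrap maintains; your two-sided ``neighbourhood of $\zeta$'' condition would be harder to propagate, since $\hzeta_t$ genuinely moves a constant distance below $\zeta$ over $\Theta(n)$ steps.

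A secondary slip: in your drift computation, the event probability you call $\hrho_k$ is in fact $\bar p_t$ (the probability a uniformly chosen \emph{point} lies in a degree-$k$ bin), not the proportion of degree-$k$ \emph{vertices}; consequently it is approximated by $\psi(\hzeta_t)$, not $k\psi(\hzeta_t)$, and your $g$ should be $g(x)=\psi(x)(x-(k-1))-1$. With the corrected $g$ one has $g(\zeta)=\frac{\zeta-(k-1)}{(r-1)(k-1)}-1$, which is negative precisely by $\zeta<r(k-1)$ from~\eqn{criticalZetaInequality} --- so your intended use of that inequality is exactly right once the spurious factor of $k$ is removed.
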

\proof We first prove parts (a,b) for $0\le t\le \eps_0 n$, for some proper $\eps_0>0$.
 It was shown in~\cite[Lemma 36]{gm} (below eq.\ (30)) that~\eqn{zeta-diff} holds as long as $\hzeta_t<r(k-1)-\eps_1$ for some constant $\eps_1$ (and $C_1, C_2$ in (a) depends only on $\eps_1$). We have $\hzeta_0=\zeta+o(1)$ by~\eqn{hatzeta0} and immediately we have $\hzeta_t=\zeta+O(\eps_0)$ for all $0\le t\le \eps_0 n$ by~\eqn{theta-diff}. Therefore, by~\eqn{criticalZetaInequality}, there exist sufficiently small constants $\eps_1,\eps_0>0$ such that $\hzeta_t<r(k-1)-\eps_1$ for all $0\le t\le \eps_0 n$.

By Lemma~\ref{l:azuma} (with $a_n=-\Theta(1/n)$, $c_n=\Theta(1/n)$ and $j=ta_n$), a.a.s.\ for all $\log^2 n\le t\le \eps_0 n$, $\hzeta_t-\hzeta_0 \le -\Theta(t/n)$. Applying Lemma~\ref{l:azuma} again to $(-\hzeta_t)_{t\ge 0}$ (with $a_n=\Theta(1/n)$, $c_n=\Theta(1/n)$ and $j=-ta_n/2$), a.a.s.\ for all $\log^2 n\le t\le \eps_0 n$, $\hzeta_t-\hzeta_0 \ge -\Theta(t/n)$. It follows then that a.a.s.\ for all $\log^2 n\le t\le \eps_0 n$, $\hzeta_t-\hzeta_0 = -\Theta(t/n)$.

Next, we discuss $\eps_0 n\le t\le \tau_1(\sigma)$. We have shown that a.a.s.\ $\hzeta_{\eps_0 n}-\hzeta_0=-\Theta(\eps_0)$, i.e.\ 
$\hzeta_{\eps_0 n}<\hzeta_0$ and so the condition $\hzeta_t<r(k-1)-\eps_1$ holds for all $\eps_0 n\le t\le 2\eps_0 n$.
With the same argument, parts (a,b) hold for all $t$ in this range. Inductively, claims in parts (a,b) hold for all $t\le\tau_1(\sigma)$, as there are only $O(1/\eps_0)=O(1)$ inductive steps. 

For part (c), note that $\zeta_0>k$ since $\zeta_0=\zeta+O(\eps_0)$, and $\zeta>k$ by~\eqn{criticalZetaInequality} and that $\eps_0$ can be chosen sufficiently small. By Lemma~\ref{l:monotone} and by the union bound, we have that a.a.s.\ for all $0\le t\le \tau_1(\sigma)$,
\[
\htheta_t=-1+(k-1)(r-1)\psi(\zeta_t)+O(n^{-1/2}\log n).
\]
By Lemma~\ref{l:monotone} and part (b), and by taking the Taylor expansion of $\psi(x)$ at $x=\zeta_0$, we have
\[
\htheta_t=-1+(k-1)(r-1)(\psi(\zeta_0)+\Theta(t/n))+O(n^{-1/2}\log n)=\htheta_0+\Theta(t/n)+O(n^{-1/2}\log n),
\]
for all $\log^2 n\le t\le \tau_1(\sigma)$. The case $t<\log^2 n$ easily follows from~\eqn{theta-diff} by noting that $O(\log^2n/n)$ is absorbed by $O(n^{-1/2}\log n)$.\qed

\smallskip

\begin{corollary}\lab{cor:zeta1}
A.a.s.\ for all $0\le t\le \tau_1(\sigma)$,
$\htheta_t\ge 2\htheta_0$.
\end{corollary}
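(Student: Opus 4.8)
The plan is to read this off directly from Lemma~\ref{lem:zeta}(c) together with the size of $\htheta_0$ recorded in~\eqn{hattheta0}. First I would invoke Lemma~\ref{lem:zeta}(c), which gives, a.a.s.\ simultaneously for all $0\le t\le \tau_1(\sigma)$,
\[
\htheta_t-\htheta_0\ \ge\ C_2\frac{t}{n}+O(n^{-1/2}\log n)\ \ge\ -O(n^{-1/2}\log n),
\]
where the last step uses $C_2>0$ and $t\ge 0$, so the drift term only helps.

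Next I would compare the surviving error term against $\sqrt{\deltac'}$. Under the standing hypothesis~\eqn{hatzeta0} we have $\deltac'\ge n^{-1/2+\eps}$, hence $\sqrt{\deltac'}\ge n^{-1/4+\eps/2}$ and in particular $n^{-1/2}\log n=o(\sqrt{\deltac'})$. Thus the displayed inequality upgrades to $\htheta_t-\htheta_0\ge -o(\sqrt{\deltac'})$. On the other hand,~\eqn{hattheta0} says $\htheta_0=-\Theta(\sqrt{\deltac'})$, so there is a constant $c_0>0$ with $\htheta_0\le -c_0\sqrt{\deltac'}$ for all large $n$. Since $-o(\sqrt{\deltac'})\ge -c_0\sqrt{\deltac'}\ge \htheta_0$ once $n$ is large enough, we obtain $\htheta_t-\htheta_0\ge \htheta_0$, i.e.\ $\htheta_t\ge 2\htheta_0$, a.a.s.\ for all $0\le t\le \tau_1(\sigma)$, which is the claim.

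There is no genuine obstacle here: the statement is an immediate corollary of Lemma~\ref{lem:zeta}(c). The only point that requires a moment's care is checking that both error terms appearing in Lemma~\ref{lem:zeta}(c) — the additive $O(n^{-1/2}\log n)$ and, for the lower bound, the nonnegative drift $C_2t/n$ — are negligible, in the direction that matters, against $|\htheta_0|=\Theta(\sqrt{\deltac'})$; this is precisely where the lower bound $\deltac'\ge n^{-1/2+\eps}$ is used. Note also that we only need a lower bound on $\htheta_t$: for $t$ so large that $\htheta_t$ has already turned positive, the inequality $\htheta_t\ge 2\htheta_0$ is trivial since $\htheta_0<0$.
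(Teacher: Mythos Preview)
Your proof is correct and takes essentially the same approach as the paper: both invoke Lemma~\ref{lem:zeta}(c) and compare the resulting error against $|\htheta_0|=\Theta(\sqrt{\deltac'})$ using $\deltac'\ge n^{-1/2+\eps}$. The paper splits into the ranges $t\le n^{1/2}\log^2 n$ and $t>n^{1/2}\log^2 n$, handling the error term and the drift term separately in the two ranges; your argument is slightly more direct, simply dropping the nonnegative drift $C_2t/n$ to get a uniform lower bound $-O(n^{-1/2}\log n)$ and then comparing once with $\htheta_0$.
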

\proof  By Lemma~\ref{lem:zeta}(c), for all $t\le n^{1/2}\log^2 n$, we have $\htheta_t=\htheta_0+O(t/n+n^{-1/2}\log n)=\htheta_0+O(n^{-1/2}\log^2 n)$. By~\eqn{hattheta0} and~\eqn{hatzeta0}, $\htheta_0<0$ and $|\htheta_0|=\Omega(n^{-1/4})$. It follows then that  $\htheta_t\ge 2\htheta_0$ for all $t\le n^{1/2}\log^2 n$. By Lemma~\ref{lem:zeta}(c), there is a constant $C>0$ such that a.a.s.\ $\htheta_t\ge \htheta_0 +Ct/n\ge 2\htheta_0$ for all $n^{1/2}\log^2 n<t\le \tau_1(\sigma)$. \qed \smallskip

Define $t_0(K)=K n\sqrt{\deltac'}$, where $K>0$ is a large constant. Note that $\deltac'=o(1)$ implies $t_0(K)=o(n)$.
The following corollary states when $\htheta_t$ becomes positive (recall that $\htheta_0<0$).

\begin{corollary}\lab{cor:zeta} 
Assume that $K>0$ is a sufficiently large constant. A.a.s.\ there are constants $C_1,C_2>0$ such that for every $t_0(K)\le t\le \tau_1(\sigma)$, $C_1t/n\le \htheta_t\le C_2t/n$.
\end{corollary}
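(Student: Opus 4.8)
The plan is to combine Corollary~\ref{cor:zeta1}, Lemma~\ref{lem:zeta}(c), and the lower bound $|\htheta_0|=\Omega(\sqrt{\deltac'})$ from~\eqn{hattheta0} to show that once $t$ exceeds $t_0(K)=Kn\sqrt{\deltac'}$ with $K$ large, the linear-growth term in Lemma~\ref{lem:zeta}(c) dominates the (now negative, order $\sqrt{\deltac'}$) initial value $\htheta_0$, forcing $\htheta_t$ to be positive and of order $t/n$. First I would invoke Lemma~\ref{lem:zeta}(c): a.a.s.\ for all $0\le t\le\tau_1(\sigma)$ we have $\htheta_t=\htheta_0+\Theta(t/n)+O(n^{-1/2}\log n)$, so there are constants $A_1>A_2>0$ with
\[
\htheta_0+A_2\frac{t}{n}-O(n^{-1/2}\log n)\le \htheta_t\le \htheta_0+A_1\frac{t}{n}+O(n^{-1/2}\log n).
\]
Since $\deltac'\ge n^{-1/2+\eps}$, the error term $O(n^{-1/2}\log n)$ is $o(\sqrt{\deltac'})=o(|\htheta_0|)$, so it can be absorbed into a constant-factor adjustment of $|\htheta_0|$; write $\htheta_0=-\Theta(\sqrt{\deltac'})$ and fix constants $b_1>b_2>0$ with $b_2\sqrt{\deltac'}\le|\htheta_0|\le b_1\sqrt{\deltac'}$ (valid a.a.s.\ by~\eqn{hattheta0}).

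Next I would establish the lower bound $\htheta_t\ge C_1 t/n$ for $t\ge t_0(K)$. For such $t$ we have $t/n\ge K\sqrt{\deltac'}\ge (K/b_1)|\htheta_0|$, hence $A_2 t/n\ge (A_2 K/b_1)|\htheta_0|\ge 2|\htheta_0|$ once $K$ is chosen so that $A_2K/b_1\ge 2$ (and also large enough to swallow the $O(n^{-1/2}\log n)$ slack). Then $\htheta_t\ge \htheta_0+A_2 t/n-o(\sqrt{\deltac'})\ge -|\htheta_0|+2|\htheta_0|-o(|\htheta_0|)\ge \tfrac12|\htheta_0|\ge 0$, and more precisely $\htheta_t\ge \htheta_0+A_2 t/n-o(\sqrt{\deltac'})\ge \tfrac12 A_2 t/n$, giving the lower bound with $C_1=A_2/2$. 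For the upper bound, $\htheta_t\le \htheta_0+A_1 t/n+o(\sqrt{\deltac'})\le A_1 t/n+o(\sqrt{\deltac'})$, and since $t\ge t_0(K)=Kn\sqrt{\deltac'}$ the term $o(\sqrt{\deltac'})$ is at most $(t/n)\cdot o(1)\le (t/n)$, so $\htheta_t\le (A_1+1)t/n$, giving the upper bound with $C_2=A_1+1$.

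The only subtlety — and the main thing to get right rather than a genuine obstacle — is bookkeeping the three scales $t/n$, $\sqrt{\deltac'}$, and $n^{-1/2}\log n$ against each other: one must use $\deltac'\ge n^{-1/2+\eps}$ to dominate the martingale-concentration error by $\sqrt{\deltac'}$, and the threshold $t_0(K)=Kn\sqrt{\deltac'}$ with $K$ large precisely to make the linear term overtake $|\htheta_0|$. Note also that the statement is only asserted for $t\le\tau_1(\sigma)$, so all applications of Lemma~\ref{lem:zeta}(c) stay in its stated range, and $t_0(K)=o(n)$ ensures the interval $[t_0(K),\tau_1(\sigma)]$ is nonempty for large $n$. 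Everything is then a union bound over $t$, already built into Lemma~\ref{lem:zeta}(c). \qed
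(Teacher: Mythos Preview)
Your proposal is correct and follows essentially the same approach as the paper: invoke Lemma~\ref{lem:zeta}(c), use $\deltac'\ge n^{-1/2+\eps}$ to absorb the $O(n^{-1/2}\log n)$ error into the $\sqrt{\deltac'}$ scale, and then choose $K$ large enough that the linear term $\Theta(t/n)$ dominates $|\htheta_0|=\Theta(\sqrt{\deltac'})$ once $t\ge t_0(K)$. The only cosmetic difference is in the upper bound: the paper appeals directly to the deterministic estimate~\eqn{theta-diff} together with $\htheta_0<0$ to get $\htheta_t\le O(t/n)$, which is a shade simpler than routing through the upper inequality of Lemma~\ref{lem:zeta}(c), but your argument works just as well.
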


\begin{proof}
By Lemma~\ref{lem:zeta}(c) and~\eqn{hatzeta0}, and noting that $n^{-1/2}\log n=o(t_0(K)/n)$ by~\eqn{hatzeta0}, a.a.s.\ for any $t_0(K)\le t\le \tau_1(\sigma)$, $\htheta_t\ge -C\sqrt{\deltac'}+Yt/n$ for some constants $C,Y>0$.  Choosing $K\ge 2C/Y$ we have that a.a.s.\ $\htheta_t\ge (Y/2)t/n$ for all $t_0(K)\le t\le \tau_1(\sigma)$.
The upper bound of $\htheta_t$ follows by~\eqn{theta-diff} and the fact that $\htheta_0<0$.
\end{proof}

This immediately yields the following corollary.

\begin{corollary}\lab{cor2:zeta} For any constant $\eps>0$, a.a.s.\ $\htheta_t=\Omega(\eps)$  for all $\eps n\le t\le \tau_1(\sigma)$.
\end{corollary}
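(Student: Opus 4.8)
The plan is to deduce Corollary~\ref{cor2:zeta} directly from Corollary~\ref{cor:zeta}; the only thing that genuinely needs checking is a comparison of scales. First I would recall that $t_0(K)=Kn\sqrt{\deltac'}$ and that $\deltac'=o(1)$ by~\eqn{hatzeta0}, so that $t_0(K)=o(n)$. Consequently, for any fixed constant $\eps>0$ and all sufficiently large $n$ we have $\eps n\ge t_0(K)$, and hence the range $\eps n\le t\le \tau_1(\sigma)$ is contained in the range $t_0(K)\le t\le \tau_1(\sigma)$ covered by Corollary~\ref{cor:zeta}.

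Next I would fix $K$ large enough for Corollary~\ref{cor:zeta} to hold, obtaining a constant $C_1>0$ (depending only on $r,k$) such that a.a.s.\ $\htheta_t\ge C_1 t/n$ for every $t_0(K)\le t\le \tau_1(\sigma)$. Restricting to $t\ge \eps n$ then gives, a.a.s.\ and simultaneously for all $\eps n\le t\le \tau_1(\sigma)$, the bound $\htheta_t\ge C_1 t/n\ge C_1\eps$. Since $C_1\eps$ is a positive constant, this is precisely the assertion $\htheta_t=\Omega(\eps)$ on the stated interval, which completes the proof.

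I expect no real obstacle here: the corollary is a rescaled special case of Corollary~\ref{cor:zeta}, and the single point to verify — that $\eps n$ lies beyond the ``warm-up'' window $t_0(K)=Kn\sqrt{\deltac'}$ — is immediate because $\sqrt{\deltac'}\to 0$. (Should one ever want the statement with $\eps$ allowed to decay slowly with $n$, the relevant constraint would instead be the condition $\eps\gtrsim \sqrt{\deltac'}$ coming out of the proof of Corollary~\ref{cor:zeta}, via the choice $K\ge 2C/Y$ there; but for constant $\eps$ this is vacuous.)
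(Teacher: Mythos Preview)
Your proposal is correct and matches the paper's own reasoning: the paper simply states that Corollary~\ref{cor2:zeta} ``immediately'' follows from Corollary~\ref{cor:zeta}, and your argument spells out exactly that deduction---using $t_0(K)=Kn\sqrt{\deltac'}=o(n)$ to ensure $\eps n\ge t_0(K)$, and then $\htheta_t\ge C_1 t/n\ge C_1\eps$.
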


\subsection{Relating $\htheta_t$ to $\theta_t$}
\lab{sec:theta}

Recall that $G_0,G_1,\ldots$ is the process produced by SLOW-STRIP. 
To  analyse $\theta_t$ using $\htheta_t$ in Section~\ref{sec:M}, let $\widehat{N}$ and $\widehat{D}$ in the definition of $\M_0$ in Section~\ref{sec:M} take the same values as the corresponding parameters in $G_0$. Therefore, $\hzeta_0=\zeta_0$ and $\htheta_0=\theta_0$. By Lemma~\ref{lem:theta0}, a.a.s.\ $\theta_0=-\Theta(\sqrt{\deltac'})$ and $\zeta_0-\zeta=\Theta(\sqrt{\deltac'})$. This verifies the assumption~\eqn{hatzeta0}.

Corresponding to $\tau_1(\sigma)$, define
\be
\tau_2(\sigma)=\max\{t: N_t\ge \sigma n\}.\lab{tau2}
\ee

The following lemma allows us to establish a relation between $\theta_t$ and $\htheta_t$.

\begin{lemma} \lab{lem:hit}
There is a constant $C>0$: for any $\log^2 n\le t\le\tau_2(\sigma)$, a.a.s.\ the number of occurrences of $\hit$ by step $t$ is at least $Ct$.
\end{lemma}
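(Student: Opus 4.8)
\textbf{Proof proposal for Lemma~\ref{lem:hit}.} The plan is to show that at every step $s\le\tau_2(\sigma)$ of SLOW-STRIP, the first of the $r-1$ uniformly chosen points lands in a heavy vertex with conditional probability bounded below by an absolute constant, and then to deduce the lemma by stochastic domination, a Chernoff bound, and a union bound over $t$.

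First I would establish the uniform lower bound on the per-step probability. Since $G_0=\hH_{\tau'(B)}\subseteq H'_{\tau'(B)}\subseteq H'$ and $H'\in AP_r(n,c'n)$, the total degree $B_0=L_0+D_0$ of $G_0$ is at most $rc'n=O(n)$, and $B_t\le B_0$ for all $t$ since degrees never increase. For $s\le\tau_2(\sigma)$ we have $N_{s-1}\ge\sigma n$ by the definition of $\tau_2(\sigma)$ in~\eqn{tau2}, hence $D_{s-1}\ge kN_{s-1}\ge k\sigma n$. All vertices in the queue $\Q$ are light, so the $D_{s-1}$ points contained in heavy vertices are among the points eligible to be chosen at step $s$; moreover removing the first point (from the front light vertex of $\Q$) does not change $D_{s-1}$, as that vertex remains light, and every point not yet removed lies in an as-yet-unexposed part. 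Therefore, conditional on the information revealed before step $s$ (namely $\calf_{s-1}$ together with $\Q$ and its points), the first of the $r-1$ uniformly chosen points at step $s$ lies in a heavy vertex with probability at least $D_{s-1}/B_{s-1}\ge k\sigma/(rc')=:\gamma$, a positive constant depending only on $r$ and $k$ (through the constants $\sigma$ and $c'$).

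Next, let $Z_s$ be the indicator that the first uniformly chosen point at step $s$ lands in a heavy vertex, so that the number of occurrences of $\hit$ at step $s$ is at least $Z_s$. Since $N_t$ is non-increasing, the event $\{s\le\tau_2(\sigma)\}$ coincides with $\{N_{s-1}\ge\sigma n\}$ and is determined by the history before step $s$; in particular, for $t\le\tau_2(\sigma)$ the process is still running at step $t$ (otherwise $N_t$ would be frozen at a value $\ge\sigma n$ for all larger $t$). Coupling the process with an i.i.d.\ sequence of uniform $[0,1]$ variables in the usual way, for every $t\le\tau_2(\sigma)$ the count $\sum_{s=1}^t Z_s$ — and hence the number of occurrences of $\hit$ by step $t$ — stochastically dominates $\Bin(t,\gamma)$. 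By the Chernoff bound, $\pr(\Bin(t,\gamma)<\gamma t/2)\le e^{-\gamma t/8}$. Taking a union bound over all integers $t$ with $\log^2 n\le t\le B_0=O(n)$, an upper bound on the number of steps of SLOW-STRIP, the total failure probability is at most $O(n)e^{-\gamma\log^2 n/8}=o(1)$. This proves the lemma with $C=\gamma/2$.

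The computations are routine; the one point that needs care is the conditioning, namely that although $\tau_2(\sigma)$ is not itself a stopping time, the monotonicity of $N_t$ makes $\{t\le\tau_2(\sigma)\}$ adapted, so the process restricted to steps $\le\tau_2(\sigma)$ genuinely stochastically dominates a sequence of independent Bernoulli$(\gamma)$ trials. The restriction $t\ge\log^2 n$ is used precisely so that the union bound over $t$ converges; for smaller $t$ the claimed linear-in-$t$ lower bound can legitimately fail.
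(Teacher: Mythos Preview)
Your proof is correct and follows essentially the same route as the paper: both establish that at each step $s\le\tau_2(\sigma)$ the conditional probability of an occurrence of $\hit$ is bounded below by a constant (the paper writes this constant as $\sigma'$ without computing it explicitly, while you give the concrete value $\gamma=k\sigma/(rc')$), and both then apply a concentration inequality followed by a union bound over $t\ge\log^2 n$. The only cosmetic difference is the concentration tool: the paper applies its Lemma~\ref{l:azuma} (Azuma--Hoeffding) to the supermartingale $(-h_t)$, whereas you couple to a $\Bin(t,\gamma)$ and use Chernoff; these are interchangeable here. One small wording quibble: the event $\{s\le\tau_2(\sigma)\}$ equals $\{N_s\ge\sigma n\}$ rather than $\{N_{s-1}\ge\sigma n\}$, but since $N_{s-1}\ge N_s$ this only strengthens the adaptedness you need, so the argument goes through unchanged.
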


\proof Let $h_t$ denote the number of occurrences of $\hit$ by step $t$. By our definition of $\tau_2(\sigma)$, the number of heavy vertices in every step is at least $\sigma n$ and thus, there is a constant $\sigma'>0$ (depending on $\sigma$) such that for every point that was u.a.r.\ chosen, the probability that it was contained in a heavy vertex is at least $\sigma'$. In every step, there are $r-1\ge 1$ such points being chosen. Hence, for all $t\le\tau_2(\sigma)$, we always have
\[
\ex(h_t\mid G_{t-1})\ge h_{t-1}+\sigma'.
\]
Our claim follows by applying Lemma~\ref{l:azuma} to $(-h_t)$ (with $a_n=-\sigma'$, $c_n=r-1$ and $j=(\sigma'/2)t$).\qed\smallskip

Noting that at most $r-1$ occurrences of $\hit$ can take place in a single step, this immediately gives the following corollary.

\begin{corollary}\lab{cor:hit}
There is a constant $0<C<1$ such that a.a.s.\ for all $\log^2 n\le t\le \tau_2(\sigma)$, $\theta_t=\htheta_{t'}$ for some $Ct<t'\le (r-1)t$.
\end{corollary}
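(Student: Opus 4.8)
The plan is to deduce this immediately from Lemma~\ref{lem:hit} together with the coupling between $\M$ and the heavy part of SLOW-STRIP set up in Sections~\ref{sec:M}--\ref{sec:theta}. Let $h_t$ denote the number of occurrences of $\hit$ during the first $t$ steps of SLOW-STRIP; equivalently (by the construction of $\M$, where $\widehat N,\widehat D$ were chosen equal to $N_0,D_0$ and $\M$ advances by exactly one step for each occurrence of $\hit$) $h_t$ is the number of steps $\M$ performs while SLOW-STRIP runs $t$ steps. By that construction the heavy-vertex degree sequence of $G_t$ is coupled to coincide with the degree sequence of $\M_{h_t}$, so in particular $(N_t,D_t)=(\widehat N_{h_t},\widehat D_{h_t})$. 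Since $\bar p_t$ depends on $\calf_t$ only through $(N_t,D_t)$ and $\theta_t=-1+(r-1)(k-1)\bar p_t$, while $\htheta_{h_t}$ is the same deterministic function of $(\widehat N_{h_t},\widehat D_{h_t})$, we obtain $\theta_t=\htheta_{h_t}$ identically on the coupled space. It therefore suffices to show that \aas, for all $\log^2 n\le t\le\tau_2(\sigma)$, one has $Ct<h_t\le(r-1)t$ for some constant $0<C<1$; we then take $t':=h_t$.

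The upper bound is deterministic: each step of SLOW-STRIP exposes the $r$-tuple through the light vertex at the front of $\Q$, which produces $r-1$ points other than that vertex's own point, so at most $r-1$ of them can be occurrences of $\hit$, whence $h_t\le(r-1)t$. For the lower bound, Lemma~\ref{lem:hit} gives, for each fixed $t\in[\log^2 n,\tau_2(\sigma)]$, that $h_t\ge C_0 t$ with probability $1-o(1)$; inspecting its proof (an application of Lemma~\ref{l:azuma} with bounded increments and deviation parameter $j=\Theta(t)$), the failure probability at a fixed $t$ is in fact $\exp(-\Omega(t))\le\exp(-\Omega(\log^2 n))$, so a union bound over the at most $n$ relevant values of $t$ shows that \aas $h_t\ge C_0 t$ for \emph{all} such $t$ simultaneously. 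Taking $C:=\min\{C_0,1/2\}$ gives $0<C<1$ and $Ct<h_t\le(r-1)t$, as needed, and $t':=h_t$ is the desired index.

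It remains to note that $t'=h_t$ lands where $\htheta_{t'}$ is meaningful and governed by the earlier lemmas: if $t\le\tau_2(\sigma)$ then $\widehat N_{t'}=N_t\ge\sigma n$, hence $t'\le\tau_1(\sigma)$ by the definition of $\tau_1$, so Lemma~\ref{lem:zeta} and its corollaries apply to $\htheta_{t'}$. The only point deserving care — and the reason the conclusion pins $t'$ down only to an interval — is the exactness of the coupling ``$\M_{h_t}=$ heavy part of $G_t$'' when two or more occurrences of $\hit$ fall in a single SLOW-STRIP step: the awkward case is that two of the $r-1$ chosen points lie in the same heavy vertex of degree exactly $k$, in which case replaying the two occurrences one at a time in $\M$ (which deletes that vertex, with all its remaining points, after the first) no longer mirrors $G_t$. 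This has probability $O(1/n)$ per step and hence occurs $O(1)$ times over all $t\le\tau_2(\sigma)=O(n)$ steps, and it is harmless: one simply lets $h_t$ count the number of $\M$-steps actually performed (so the coupling is exact by construction), which only shaves an additive $O(1)$ off the lower bound $h_t\ge C_0 t$ and is absorbed into the constant for $t\ge\log^2 n$. With this convention the argument above is complete; the substantive content of the corollary lies entirely in Lemma~\ref{lem:hit} and in the construction of $\M$, and the present step is routine accounting.
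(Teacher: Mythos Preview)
Your proof is correct and follows essentially the same approach as the paper, which dispatches this in a single sentence (the upper bound $h_t\le(r-1)t$ is trivial, the lower bound is Lemma~\ref{lem:hit}, and $\theta_t=\htheta_{h_t}$ by the construction of $\M$). Your explicit union bound and your discussion of the edge case where two $\hit$ occurrences in one SLOW-STRIP step land in the same degree-$k$ heavy vertex go beyond what the paper spells out; the paper tacitly treats the coupling as exact, and you are right that the discrepancy is an additive $O(1)$ that is absorbed for $t\ge\log^2 n$.
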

Another corollary follows easily from Corollaries~\ref{cor:zeta} and~\ref{cor:hit} (recalling that $n\sqrt{\deltac'}=\Omega(n^{3/4})$ by~\eqn{condc}).
\begin{corollary}\lab{cor2:hit}
Let $K>0$ be a sufficiently large constant. There exist two constants $C_1,C_2>0$ such that a.a.s.\ for all $Kn\sqrt{\deltac'}\le t\le \tau_2(\sigma)$, $C_1t/n\le\theta_t\le C_2t/n$.
\end{corollary}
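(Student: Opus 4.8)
The plan is to derive Corollary~\ref{cor2:hit} by composing Corollary~\ref{cor:hit} with Corollary~\ref{cor:zeta}; the only point requiring care is the matching of the two index ranges. Recall that the index $t'$ appearing in Corollary~\ref{cor:hit} with $\theta_t=\htheta_{t'}$ is precisely $h_t$, the number of occurrences of $\hit$ during the first $t$ steps of SLOW-STRIP: by the identification of $\widehat N,\widehat D$ with $N_0,D_0$ made at the beginning of Section~\ref{sec:theta}, the heavy-vertex degree sequence of $G_t$ equals that of $\M_{h_t}$, whence $N_t=\hN_{h_t}$, $\zeta_t=\hzeta_{h_t}$ and $\theta_t=\htheta_{h_t}$. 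Since $h_t$ is nondecreasing in $t$ and $\hN_s$ is nonincreasing in $s$, from $\hN_{\tau_1(\sigma)}\ge\sigma n>\hN_{\tau_1(\sigma)+1}$ together with $N_{\tau_2(\sigma)}=\hN_{h_{\tau_2(\sigma)}}\ge\sigma n$ we get $h_t\le h_{\tau_2(\sigma)}\le\tau_1(\sigma)$ for every $t\le\tau_2(\sigma)$. Thus the index $t'=h_t$ produced by Corollary~\ref{cor:hit} always lies in $(Ct,(r-1)t]$ and additionally satisfies $t'\le\tau_1(\sigma)$.

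Next I would fix the constant $K$. Let $K_0$ be the ``sufficiently large constant'' required in Corollary~\ref{cor:zeta}, and let $0<C<1$ be the constant from Corollary~\ref{cor:hit}; choose $K\ge K_0/C$. Take any $t$ with $Kn\sqrt{\deltac'}\le t\le\tau_2(\sigma)$. Since $n\sqrt{\deltac'}=\Omega(n^{3/4})$ by~\eqn{condc}, for large $n$ we have $t\ge Kn\sqrt{\deltac'}\ge\log^2 n$, so Corollary~\ref{cor:hit} applies and gives $\theta_t=\htheta_{t'}$ with $Ct<t'\le(r-1)t$. Then $t'>Ct\ge CKn\sqrt{\deltac'}\ge K_0 n\sqrt{\deltac'}=t_0(K_0)$, and $t'\le\tau_1(\sigma)$ by the previous paragraph, so $t'$ lies in the range where Corollary~\ref{cor:zeta} is valid; hence a.a.s.\ $C_1't'/n\le\htheta_{t'}\le C_2't'/n$ for the constants $C_1',C_2'>0$ supplied there. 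Intersecting the two a.a.s.\ events coming from Corollaries~\ref{cor:hit} and~\ref{cor:zeta} gives a single a.a.s.\ event on which all of the above holds simultaneously for every $t$ in the stated range.

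Finally, combining $\theta_t=\htheta_{t'}$, the bounds $C_1't'/n\le\htheta_{t'}\le C_2't'/n$, and $Ct<t'\le(r-1)t$ yields $CC_1'\cdot t/n<\theta_t\le(r-1)C_2'\cdot t/n$, so the corollary follows with $C_1=CC_1'$ and $C_2=(r-1)C_2'$. I do not expect any genuine obstacle: all the substantive work is already contained in Lemma~\ref{lem:zeta}, Lemma~\ref{lem:hit} and their corollaries. The only things to get right are (i) linking the two ``sufficiently large'' thresholds through the factor $C$, so that the image index $t'=h_t$ still lands beyond $t_0(K_0)$, and (ii) checking that $t'=h_t\le\tau_1(\sigma)$ whenever $t\le\tau_2(\sigma)$, which is immediate from $N_t=\hN_{h_t}$ and the monotonicity of $h_t$ and of $\hN$.
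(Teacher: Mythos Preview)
Your proof is correct and follows exactly the route the paper intends: combine Corollary~\ref{cor:zeta} with Corollary~\ref{cor:hit}, using $n\sqrt{\deltac'}=\Omega(n^{3/4})$ from~\eqn{condc} to ensure the lower threshold $\log^2 n$ of Corollary~\ref{cor:hit} is met. The paper states this in one line without details; you have carefully supplied the two points that need checking, namely that $K$ should be inflated by the factor $1/C$ so that $t'=h_t$ lands above $t_0(K_0)$, and that $h_t\le\tau_1(\sigma)$ whenever $t\le\tau_2(\sigma)$ via the identification $N_t=\hN_{h_t}$.
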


\subsection{Specifying $\sigma$}
\lab{sec:sigma}

The key lemma we use to specify the constant $\sigma$ is the following.

\begin{lemma}\lab{lem:small} Assume $c=c_{r,k}+o(1)$ and consider the parallel $k$-stripping process $H_0,H_1,H_2,\ldots$ with $H_0\in AP_r(n,cn)$.
There is a constant $\sigma_0>0$ such that a.a.s.\ if $H_i$ has at most $\sigma_0 n$ vertices for some $i>0$ then every component of $H_{i+1}$ contains $O(\log n)$ vertices.
\end{lemma}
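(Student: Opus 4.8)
The plan is to show that once the configuration $H_i$ has fewer than $\sigma_0 n$ vertices, it is subcritical in the sense that a single further stripping round destroys all macroscopic structure. The key point is that when $c=c_{r,k}+o(1)$, after the first few linear-removal rounds of the parallel process the number of light vertices drops; so the only way $H_i$ can have as few as $\sigma_0 n$ vertices (with $\sigma_0$ small, in particular $\sigma_0 n < \a n$ for $c$ near $c_{r,k}$) is that $H_i$ itself has already lost almost all of its hyperedges, i.e. $|E(H_i)| = O(\sigma_0 n)$ with a small multiplicative constant. More precisely, $H_i\subseteq H_0\in AP_r(n,cn)$, and removing vertices only removes hyperedges, so the average degree of $H_i$ is at most $rcn/|V(H_i)|$ only if no hyperedges were lost; what we actually need is that the number of hyperedges of $H_i$ is small. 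I would argue this by the following monotonicity: the number of hyperedges remaining in $H_i$ is at most the number remaining in any configuration that contains $\le \sigma_0 n$ vertices and arose from the stripping process, and by the analysis of $(|S_t|)$ in the supercritical regime (or directly, by the fact that $H_i$ contains at most $\sigma_0 n$ vertices each of degree $\ge k$ only in the core, the rest light) the number of hyperedges is $O(\sigma_0 n)$.

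Granting $|E(H_{i+1})| \le |E(H_i)| \le \gamma n$ for a constant $\gamma=\gamma(\sigma_0)$ that tends to $0$ with $\sigma_0$, the next step is the standard subcritical-branching argument. Condition on $H_i$ and on the exposed information; by the AP-model (Corollary~\ref{ccon}) it suffices to prove the component bound for a configuration with $\le \sigma_0 n$ vertices and $\le \gamma n$ $r$-tuples, which is in particular dominated by $AP_r(m', \gamma' m')$ for suitable $m'\le n$ with $\gamma'$ small. For such a hypergraph the component containing a fixed vertex $v$ in $H_{i+1}$ is explored by a branching process: each vertex of degree $d$ spawns at most $d(r-1)$ new vertices, and the relevant offspring mean is proportional to (average degree)$\times(r-1) = r\gamma'(r-1)$, which is $<1$ once $\sigma_0$ (hence $\gamma'$) is chosen small enough. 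A Chernoff/union bound over the $\le n$ starting vertices then gives that a.a.s.\ every component of $H_{i+1}$ has $O(\log n)$ vertices; the logarithm appears exactly as the depth needed for a subcritical exploration tree to survive with probability $\gg 1/n$. One must be slightly careful that the degree sequence of $H_{i+1}$ is not heavy-tailed — but every vertex of $H_i$ had degree $\le rcn = O(n)$ and in fact a.a.s.\ $O(\log n/\log\log n)$ in $H_0$, and degrees only decrease, so the branching bound $d(r-1)$ with $d$ summing to $r|E(H_{i+1})| = O(\gamma' n)$ is fine in aggregate.

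The main obstacle, and the step I would spend the most care on, is the first one: justifying that $|V(H_i)|\le\sigma_0 n$ actually forces $|E(H_i)|$ to be small (a small constant times $n$), since a priori a configuration could have few vertices but still linearly many hyperedges of bounded average degree. Here I would invoke that $H_i$ comes from the parallel stripping process applied to $H_0\in AP_r(n,cn)$ with $c=c_{r,k}+o(1)$: by Theorem~\ref{tkim} and the discussion after it, the process either reaches a $k$-core with $(\a+o(1))n$ vertices (so $|V(H_i)|\le\sigma_0 n$ with $\sigma_0<\a$ can only happen after the core is already empty, i.e. after termination, vacuously) or the core is empty; in the latter case, once the vertex count dips below $\sigma_0 n$ the remaining configuration has total degree $\le$ (something controlled by the number of light vertices still being stripped), which is $O(\sigma_0 n)$ by the same $\sum_j |S_j|$ estimates used in Lemma~\ref{lem:Si}(c). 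I would make this precise by tracking total degree rather than hyperedge count, since total degree is what the SLOW-STRIP analysis already controls, and then convert. Once that reduction is in hand, the rest is the routine subcritical-component computation sketched above, which I would not grind through in detail.
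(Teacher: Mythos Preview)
Your proposal has a real gap at the branching step. You argue that once $|V(H_i)|\le\sigma_0 n$ and $|E(H_i)|\le\gamma n$ with $\gamma\to 0$ as $\sigma_0\to 0$, the configuration is dominated by some $AP_r(m',\gamma' m')$ with $\gamma'$ small, so the exploration is subcritical. But the relevant density for the branching mean is $|E(H_i)|/|V(H_i)|$, not $|E(H_i)|/n$, and the former is only $O(1)$: every vertex of $H_i$ was heavy (degree $\ge k$) in $H_{i-1}$, so the average degree does not go to zero with $\sigma_0$. The offspring mean in the exploration depends on the size-biased second moment of the degree distribution, and a bound on total edge count alone does not make it $<1$. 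What is actually needed is the degree-sequence condition~\eqref{graphDeg}, which says that degree-one vertices dominate in the right weighted sense; your argument never establishes anything of this kind. Separately, the claimed domination by a fresh $AP_r(m',\gamma' m')$ is unjustified: conditioned on $V(H_i)$ and its degree sequence the partition is uniform, but the degree sequence itself is not that of $AP_r(m',\gamma' m')$. Finally, your appeal to Lemma~\ref{lem:Si}(c) is to the wrong regime: that lemma is about the supercritical auxiliary process $(H'_t)$ with $c'>c_{r,k}+n^{-1/2+\eps}$ and says nothing about the near-critical process once it has passed the would-be core size.

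The paper's proof supplies exactly the missing degree-sequence information, by coupling rather than by a direct estimate. One fixes a strictly subcritical $c''=c_{r,k}-\eps_0$ and generates $H''_0\subseteq H_0$ by deleting $(c-c'')n$ random $r$-tuples from $H_0$. Known facts about the subcritical process (Lemmas~\ref{lem:constantRounds} and~\ref{lem:rho}) give a constant $I$ such that $|H''_I|\sim\gamma_I n$ and every $H''_j$ with $j\ge I$ satisfies~\eqref{graphDeg} with $K=2$. Setting $\sigma_0=\gamma_I/2$, the key observation is that once $|H_{i-1}|\le\sigma_0 n$, the configuration $H_i$ differs from some $H''_j$ with $j\ge I$ by only $O(\eps_0 n)$ $r$-tuples (because removing the extra $O(\eps_0 n)$ tuples from $H_{i-1}$ and then stripping once lands inside the subcritical sequence). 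This perturbation is small enough to transfer~\eqref{graphDeg} to $H_i$ with $K=3/2$, and then Lemma~\ref{lem:rho}(b) gives the $O(\log n)$ component bound directly. The coupling is the idea you are missing; without it there is no apparent way to certify the subcriticality of the degree sequence at an unbounded iteration $i$.
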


 We will use the following two lemmas to prove Lemma~\ref{lem:small}. The first lemma is from~\cite[Lemma 7]{molloy}.
\begin{lemma}\lab{lem:constantRounds} Assume $c<c_{r,k}-\eps_0$ for some constant $\eps_0>0$ and consider the parallel $k$-stripping process $H_0,H_1,H_2,\ldots$ with $H_0\in AP_r(n,cn)$. Then, there are positive constants $(\gamma_i)_{i=0}^{\infty}$ with $\lim_{i\to\infty} \gamma_i=0$ such that, a.a.s.\ for every fixed $i\ge 0$, $|H_i|\sim \gamma_i n$.
\end{lemma}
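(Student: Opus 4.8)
The plan is to exploit that, for fixed $i$, the stripping process is essentially local: whether a vertex survives $i$ rounds depends only on its bounded‑radius neighbourhood. So I would (i) identify the constants $\gamma_i$ from the local weak limit of $AP_r(n,cn)$, (ii) prove concentration of $|H_i|$ by a first‑ and second‑moment argument, and (iii) check $\gamma_i\to0$ by elementary analysis of a scalar recursion.

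\emph{The limit and the constants.} Let $T$ be the Galton--Watson tree that is the local weak limit of $AP_r(n,cn)$: the root carries $\mathrm{Pois}(rc)$ hyperedges, each hyperedge spawns $r-1$ child vertices, each child vertex independently carries $\mathrm{Pois}(rc)$ further hyperedges, and so on. Running parallel $k$‑stripping on $T$, set $\gamma_i=\pr(\text{the root of }T\text{ survives }i\text{ rounds})$ and $\gamma_0=1$. Unrolling the stripping recursion on $T$ gives, with $x_0=0$ and $x_{j+1}=1-f_{k-1}(rc(1-x_j)^{r-1})$ (so $x_j$ is the probability that a pendant subtree, together with the edge joining its root to the parent, removes that root within $j$ rounds), the formula $\gamma_i=f_k(rc(1-x_{i-1})^{r-1})$ for $i\ge1$. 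Since $f_{k-1}$ of a positive number is strictly less than $1$, we get $x_j<1$ for all $j$, so the argument of $f_k$ above is positive and each $\gamma_i$ is a positive constant; also $\gamma_{i+1}\le\gamma_i$ because $H_{i+1}\subseteq H_i$ (equivalently, $(x_j)$ is nondecreasing and $f_k$ is increasing).

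\emph{Concentration.} Fix $i$. Unrolling the recursion shows that ``$v\in V(H_i)$'' is a function of the sub‑configuration induced on the ball $B_i(v)$ of radius $i$ around $v$ (distance counted in hyperedge‑hops). For fixed $i$ we have $\ex(|B_i(v)|)=O(1)$, and in $AP_r(n,cn)$ the ball $B_i(v)$ converges in distribution to the depth‑$i$ truncation of $T$ and is acyclic with probability $1-O(1/n)$; these are standard for AP/configuration models with bounded edge size, in the same spirit as the local estimates already used in this paper. Hence $\ex(|H_i|)=(\gamma_i+o(1))n$. For the second moment, for all but $o(n^2)$ ordered pairs $(u,v)$ the balls $B_i(u),B_i(v)$ are vertex‑disjoint and jointly converge to two independent copies of the truncated tree, so $\pr(u,v\in V(H_i))=(\gamma_i+o(1))^2$; thus $\ex(|H_i|^2)=(\gamma_i^2+o(1))n^2$, $\var(|H_i|)=o(n^2)$, and Chebyshev gives $|H_i|/n\to\gamma_i$ in probability, i.e.\ a.a.s.\ $|H_i|\sim\gamma_i n$ for each fixed $i$. (An alternative closer to the methods used elsewhere here is an induction on $i$: conditional on the degree sequence of $H_{i-1}$ being concentrated around the predicted mixture, $H_{i-1}$ is a uniform AP‑configuration, so the set of light vertices and then the degree sequence of $H_i$ are pinned down up to $o(n)$ errors, propagating the estimate.)

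\emph{$\gamma_i\to0$.} The map $x\mapsto1-f_{k-1}(rc(1-x)^{r-1})$ is nondecreasing on $[0,1]$ and fixes $1$, so $(x_j)$ increases to its smallest fixed point $x_\ast\in[0,1]$. If $x_\ast<1$, put $\mu_\ast=rc(1-x_\ast)^{r-1}>0$; the fixed‑point identity rearranges to $c=\mu_\ast/(rf_{k-1}(\mu_\ast)^{r-1})\ge\inf_{\mu>0}\mu/(rf_{k-1}(\mu)^{r-1})=c_{r,k}$ by~\eqn{krthreshold}, contradicting $c<c_{r,k}$ (this is the only place the hypothesis is used, and strict inequality is exactly what is needed). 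Hence $x_j\to1$, so $rc(1-x_{i-1})^{r-1}\to0$ and $\gamma_i=f_k(rc(1-x_{i-1})^{r-1})\to f_k(0)=0$; consistently, $\lim_i\gamma_i$ is the asymptotic $k$‑core density, which is $0$ below the threshold by Theorem~\ref{tkim}(a). Combining the three parts proves the lemma. The one step needing genuine care is the second‑moment estimate — that the radius‑$i$ neighbourhoods of two uniformly random vertices are asymptotically independent and tree‑like in the AP‑model — but this is entirely standard; everything else is a finite branching‑process computation or elementary analysis of $(x_j)$.
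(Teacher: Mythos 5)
Your argument is correct, but there is nothing in the paper to compare it with step by step: the paper does not prove Lemma~\ref{lem:constantRounds} at all, it quotes it from~\cite{molloy} (Lemma~7 there), adding only the remark that the proof in the source is carried out in the configuration/AP setting, so the statement transfers from $\H_r(n,r!c/n^{r-1})$ to $AP_r(n,cn)$. What you have written is therefore a self-contained re-proof, and it takes the ``local weak limit'' route: identify $\gamma_i$ as the survival probability of the root after $i$ rounds of parallel stripping on the Poisson$(rc)$ Galton--Watson hypertree, prove $|H_i|\sim\gamma_i n$ by a first- and second-moment argument using convergence of one and two bounded-radius balls, and show $\gamma_i\to 0$ by a fixed-point analysis of $x_{j+1}=1-f_{k-1}(rc(1-x_j)^{r-1})$; the cited source instead exposes the process round by round in the configuration model and tracks the (truncated Poisson) degree sequence of $H_i$ — essentially the alternative you sketch in parentheses, which is closer in spirit to the machinery used elsewhere in this paper. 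Your fixed-point step is exactly where the hypothesis $c<c_{r,k}$ enters, and the identity $c=\mu_*/\bigl(rf_{k-1}(\mu_*)^{r-1}\bigr)\ge c_{r,k}$ is consistent with the threshold formula~\eqn{krthreshold}. Two points deserve to be spelled out if this is written in full: (i) the exact formula $\gamma_i=f_k\bigl(rc(1-x_{i-1})^{r-1}\bigr)$ relies on the (standard but not completely obvious) observation that, as long as the root has not been removed, each root-hyperedge is deleted precisely at the minimum of the removal times of its $r-1$ children computed in their own subtree processes (parent hyperedge treated as permanently present), so the Poisson thinning is exact, and that by monotonicity of degrees the survival constraints at rounds $1,\dots,i$ reduce to the single constraint at round $i$; (ii) the local convergence of a ball, the asymptotic independence of two balls, and the $1-O(1/n)$ acyclicity bound in the AP-model should be stated or cited explicitly, since this paper never sets up local weak convergence — though these facts are indeed standard for configuration-type models with $m=O(n)$.
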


Let $\rho_i(j)$ denotes the proportion of vertices with degree $j$ in $H_i$. Part (a) of the following lemma is from~\cite[Section 8]{amxor}, whereas part (b) is from~\cite[Theorem 1]{M6}.
\begin{lemma}\lab{lem:rho}
Assume $c<c_{r,k}-\eps_0$ for some constant $\eps_0>0$ and $H_0\in AP_r(n,cn)$. 
\begin{enumerate}
\item[(a)] For any constant $K>0$, there is a constant $I>0$ such that for all $i\ge I$,
\be
\rho_i(1)> K \sum_{j\ge 2}\big((k-1)j(j-1)-j\big)\rho_i(j). \lab{graphDeg}
\ee
\item[(b)] A.a.s.\ a random hypergraph with degree sequence satisfying~\eqn{graphDeg} for some constant $K>1$ has the property that each component has size $O(\log n)$. 
\end{enumerate}
\end{lemma}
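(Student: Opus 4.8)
\emph{Plan.} This lemma collects two facts from the literature: part~(a) is carried out in \cite[Section~8]{amxor}, and part~(b) is \cite[Theorem~1]{M6}. I outline how I would prove each.

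\emph{Part~(a).} The idea is to first pin down the limiting degree sequence of $H_i$ for each fixed $i$, then follow it as $i\to\infty$. Since $c<c_{\dd,k}-\eps_0$ the $k$-core is empty and the standard ``peeling'' analysis applies (Wormald's differential equation method, or equivalently the branching-process computation in \cite{molloy,amxor}). Working in the local tree model of $AP_{\dd}(n,cn)$ --- a root with $\mathrm{Po}(\dd c)$ incident hyperedges, each hyperedge carrying $\dd-1$ further vertices, each of those carrying $\mathrm{Po}(\dd c)$ further hyperedges, and so on --- let $a_i$ be the probability that such a subtree-root survives $i$ rounds of stripping ``from below''. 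Unwinding the recursion gives $a_0=1$ and $a_i=f_{k-1}(\dd c\,a_{i-1}^{\dd-1})$; this sequence is nonincreasing, and since $c<c_{\dd,k}=\inf_{\mu>0}\mu/(\dd f_{k-1}(\mu)^{\dd-1})$ by~\eqn{krthreshold}, its only fixed point in $[0,1]$ is $0$, so $a_i\to0$ (consistent with $\gamma_i\to0$ in Lemma~\ref{lem:constantRounds}). A vertex lies in $H_i$ exactly when at least $k$ of its hyperedges survive to round $i-1$, and then its degree in $H_i$ is the number surviving to round $i$; quantitatively, a.a.s.\ for every fixed $j$,
\[
\rho_i(j)=\pr[\,Y_i=j\mid Y_i+Z_i\ge k\,]+o(1),
\]
where $Y_i\sim\mathrm{Po}(\mu_i)$ and $Z_i\sim\mathrm{Po}(\lambda_i-\mu_i)$ are independent with $\mu_i=\dd c\,a_i^{\dd-1}$ and $\lambda_i=\dd c\,a_{i-1}^{\dd-1}$.

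\emph{Part~(a), finishing.} Put $t_i=\mu_i/\lambda_i=(a_i/a_{i-1})^{\dd-1}$. Because $f_{k-1}(\la)=\la^{k-1}/(k-1)!\cdot(1+O(\la))$ as $\la\to0$ and $(k-1)(\dd-1)\ge 2$ --- the single place where $(\dd,k)\neq(2,2)$ is used --- the recursion yields $a_i=\Theta(a_{i-1}^{(k-1)(\dd-1)})$, hence $a_i/a_{i-1}\to0$ and so $t_i\to0$. Since also $\lambda_i\to0$, the event $\{Y_i+Z_i\ge k\}$ is typically met with $Y_i+Z_i=k$ exactly, in which case $Y_i\sim\mathrm{Bin}(k,t_i)$; a short computation then gives, a.a.s.,
\[
\rho_i(1)=(k+o(1))\,t_i,\qquad \sum_{j\ge 2}j(j-1)\rho_i(j)=O(t_i^{2}),
\]
the second estimate because a vertex entering $H_i$ with degree at least $2$ must keep two of its few surviving hyperedges. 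As $0\le (k-1)j(j-1)-j\le (k-1)j(j-1)$ for $j\ge2$, the right-hand side of~\eqn{graphDeg} is $O(t_i^2)$ while the left-hand side is $\Theta(t_i)$; the ratio tends to $\infty$, so for any constant $K$ there is a constant $I=I(K,\dd,k,c)$ with~\eqn{graphDeg} a.a.s.\ for all $i\ge I$.

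\emph{Part~(b).} Given a random $\dd$-uniform configuration-model hypergraph on a degree sequence satisfying~\eqn{graphDeg}, I would bound component sizes by the usual exploration argument: breadth-first search from a vertex, where each newly discovered vertex is reached through one of its points (so has size-biased degree) and opens up its remaining hyperedges, each contributing $\dd-1$ fresh vertices. While fewer than $O(\log n)$ vertices have been revealed this exploration is dominated by a fixed Galton--Watson process, and the substance of \cite[Theorem~1]{M6} is that~\eqn{graphDeg} with $K>1$ forces that process to be subcritical with a multiplicative gap bounded away from $1$ in terms of $K$. (For the degree sequences that actually arise here this holds with enormous slack: by part~(a) the second factorial moment $\sum_j j(j-1)\rho_i(j)$ is $O(t_i^2)$ while the first moment $\sum_j j\rho_i(j)$ is $\Theta(t_i)$, so the offspring mean tends to $0$.) A subcritical Galton--Watson process has total progeny with an exponential tail, so the probability that a fixed vertex lies in a component with at least $\gamma\log n$ vertices is at most $n^{-2}$ for a suitable constant $\gamma$; a union bound over the at most $n$ starting vertices then gives that a.a.s.\ every component has $O(\log n)$ vertices.

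\emph{Main obstacle.} For (a), the delicate point is the decay \emph{rate}: one must check that $a_i\to0$ faster than geometrically --- precisely what $(\dd,k)\ne(2,2)$ buys, since for $\dd=k=2$ the decay is only geometric and $t_i$ does not vanish --- and then carry the conditioned-Poisson estimates through so as to compare $\rho_i(1)$ with the whole weighted tail, transferring from the idealised branching process to the true $H_i$ (needed only for the finitely many rounds $i\le I$, hence covered by the peeling analysis). For (b), the work is in upgrading plain subcriticality, which only excludes a giant component, to the uniform ``every component is $O(\log n)$'' statement: this needs the exponential tail on component sizes, which in turn requires controlling the upper tail of the degree sequence so that the exploration stays dominated by the Galton--Watson process --- the technical core of \cite{M6}.
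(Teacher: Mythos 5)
The paper gives no proof of this lemma: it simply cites \cite[Section~8]{amxor} for part~(a) and \cite[Theorem~1]{M6} for part~(b), adding only the remark that these results transfer from $\H_r(n,r!c/n^{r-1})$ to $AP_r(n,cn)$ because the original proofs work in the configuration model, so your proposal---which invokes exactly the same two sources and sketches the underlying branching-process and subcritical-exploration arguments consistently with them---takes essentially the same approach. The one point to correct is your closing parenthetical: the transfer from the idealised branching process to the true $H_i$ is needed precisely for the rounds $i\ge I$ where \eqn{graphDeg} is asserted (uniformly, up to the last iteration of the process), not merely for the finitely many rounds $i\le I$; this uniformity in $i$ is part of what the cited argument in \cite{amxor} supplies.
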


Note that Lemmas~\ref{lem:constantRounds} and~\ref{lem:rho} were stated for $\H_r(n,r!c/n^{r-1})$ in the original papers. However, the proofs use the configuration model, which is the AP-model conditioned to ``typical'' degree sequences. Hence, these results also hold a.a.s.\ for $AP_r(n,cn)$.

 \begin{proof}[Proof of Lemma~\ref{lem:small}] 

Fix a small $\eps_0>0$ and let $c''=c_{r,k}-\eps_0$. Couple $H''_0\subseteq H_0$ in the same way as described in Section~\ref{sec:coupling} such that $H''_0\in AP_r(n,c''n)$; i.e.\ we generate $H''_0$ by removing u.a.r.\ $(c-c'')n$ $r$-tuples in $H_0$. Let ${\mathcal E}$ denote this set of $r$-tuples.
Applying Lemma~\ref{lem:rho} to $H''_0$, there is a sufficiently large constant $I$ such that~\eqn{graphDeg} is a.a.s.\ satisfied with $K=2$. Let $(\gamma_i)$ be the sequence in Lemma~\ref{lem:constantRounds} for $(H''_i)$, and let $\sigma_0:=\gamma_I/2$. 
 
   Consider the parallel stripping process $H_0, H_1,\ldots$. Let $i-1$ denote the first iteration after which there are at most $\sigma_0 n$ vertices remaining, if the process has not terminated by then. Define $i$ to be $n$ if no such iteration exists.
Note that $i$ is not (necessarily) a.a.s.\ bounded by a constant, since we have $c=c_{r,k}+o(1)$. 

Let $\widehat H$ be the random configuration obtained by removing all the $r$-tuples in ${\mathcal E}$ from $H_{i-1}$. Since $|{\mathcal E}|=(c-c'')n$,  the number of $r$-tuples removed is at most $(c-c'')n=O(\eps_0 n)$, and so the number of new light vertices created by the removal of the $r$-tuples in ${\mathcal E}$ is $O(\eps_0 n)$. Let $\widehat H'$ be the graph obtained from $\widehat H$ by removing all light vertices in $\widehat H$. Then, $H_i$ and $\widehat H'$ differ by at most $O(\eps_0 n)$ $r$-tuples. Moreover, as we have discussed before, $\widehat H'$ would have occurred in the parallel stripping process starting with $H''_0$, i.e.\ there is some $j$ such that $H''_j=\widehat H'$.


Since $H''_j=\widehat H'\subseteq H_{i-1}$, we have $|H''_j|\le \sigma_0 n$. Hence, we must have $j\ge I$, since $\sigma_0=\gamma_I/2$ by definition and $|H''_I|\sim \gamma_I n$ by Lemma~\ref{lem:constantRounds}.

By Lemma~\ref{lem:rho}(a) and the choice of $I$, a.a.s.\ $H''_j=\widehat H'$ satisfies~\eqn{graphDeg} with $K=2$. But $H_{i}$ and $H''_j=\widehat H'$ differ by only $O(\eps_0 n)$ $r$-tuples as we discussed before. It follows immediately that the degree sequence of $H_{i}$ will satisfy~\eqn{graphDeg} with $K=3/2$ as long as we choose $\eps_0>0$ sufficiently small. By Lemma~\ref{lem:rho}(b), a.a.s.\ every component in $H_{i}$ has size $O(\log n)$. 
\end{proof}

In the rest of the paper, we choose $\sigma$ to be the constant that satisfies Lemma~\ref{lem:small} and this fixes the definitions of $\tau_1(\sigma)$ in Section~\ref{sec:M} and $\tau_2(\sigma)$ in Section~\ref{sec:theta}.

Recall the definition of $G_0$ in Section~\ref{sec:coupling}. Let $\hG_0,\hG_1,\hG_2\ldots$ denote the parallel $k$-stripping process with $\hG_0=G_0$. With a slight abuse of notation, we use $(S_i)$ to denote the set of light vertices associated with $(\hG_i)$; i.e.\ $S_i=V(\hG_i)\setminus V(\hG_{i+1})$. Recall that $L(\hG_i)$ is the total degree of $S_i$ in $\hG_i$.

Define
\be
I_{\sigma}:=\max\{i: \ |\hG_i|\ge \sigma n\}. \lab{Isigma}
\ee
Then, $|\hG_{I_{\sigma}+1}|<\sigma n$ and so by Lemma~\ref{lem:small}, every component of $\hG_{I_{\sigma}+2}$ has size $O(\log n)$. It is easy to bound $s(\hG_{I_{\sigma}+2})$ by $O(\log\log n)$ (see the end of Section~\ref{sec:a}). Thus, in order to bound $s(G_0)$, it is sufficient to bound $I_{\sigma}$.
We often need to relate an iteration in the parallel stripping process to the step in SLOW-STRIP corresponding to the beginning of that iteration (recall that the process generated by SLOW-STRIP is denoted by $G_0,G_1,\ldots$ and $L_t$ denotes the degree of light vertices in $G_t$). To do so, we define $t(i)$ to be the step in SLOW-STRIP that the first vertex removed at the $i$-th iteration of the parallel process is pushed to the front of the queue $\Q$.
Therefore,
\be
L(\hG_i)=L_{t(i)}\ \mbox{for each $i\ge 0$}. \lab{t(i)}
\ee
 In particular, $L(\hG_0)=L_0$. 


\subsection{Subcritical: proof of Theorem~\ref{thm:Stripsub}(a)}
\lab{sec:a}

Now let $\eps>0$ be an arbitrary constant and we assume that $c\le c_{r,k}-n^{-1/2+\eps}$. Without loss of generality, we may assume that $\eps$ is sufficiently small. Let $0<\vs<1$ be a sufficiently small constant to be specified later; define $c'=c_{r,k}+\vs \deltac$; this fixes $c'$ introduced in Section~\ref{sec:coupling}. Clearly, all conditions in~\eqn{condc} are satisfied. 
By Lemma~\ref{lem:theta0}, a.a.s.\ $\theta_0=-\Theta(\sqrt{\vs\deltac})$ and $\zeta_0=\zeta+\Theta(\sqrt{\vs\deltac})$.

Recall that $L_0$ denotes the total degree of the light vertices in $G_0$. By Lemma~\ref{lem:L0}, a.a.s.\ $L_0=\Theta(n\deltac)$. Recalling~\eqn{couple} and~\eqn{G0}, our goal in this section is to bound $s(G_0)$.

\begin{lemma}\lab{lem:D} Suppose $\vs>0$ is sufficiently small. There is a constant $C>0$ and an integer $i<C/\sqrt{\deltac}$ such that
 a.a.s.\ $t(i)=\Theta(n\sqrt{\deltac})$, $\theta_{t(i)}= \Theta(\sqrt{\deltac})$ and $L_{t(i)}=\Theta(n\deltac)$.
 \end{lemma}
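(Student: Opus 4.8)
The plan is to track the pair $(L_t,\htheta_t)$ simultaneously through SLOW-STRIP, using the coupled auxiliary chain $\M_t$ from Section~\ref{sec:M} to control $\htheta_t$, and the recursion~\eqn{Ldiff} to control $L_t$. By Lemma~\ref{lem:theta0} we start at $\theta_0=-\Theta(\sqrt{\vs\deltac})$ and $L_0=\Theta(n\deltac)$ (Lemma~\ref{lem:L0}); note $L_0/B_0=\Theta(\deltac)$ is tiny, so~\eqn{Ldiff} gives $\ex(L_{t+1}-L_t\mid\calf_t)=\theta_t+O(\deltac)+O(n^{-1})$. The idea is that as long as $t=O(n\sqrt{\deltac})$, Corollary~\ref{cor2:hit} (via Corollaries~\ref{cor:zeta} and~\ref{cor:hit}) tells us $\theta_t$ climbs from $-\Theta(\sqrt{\vs\deltac})$ up through $0$ and reaches $\Theta(\sqrt{\deltac})$ precisely once $t$ exceeds $t_0(K)=Kn\sqrt{\deltac'}=\Theta(n\sqrt{\deltac})$ (here $\deltac'=\vs\deltac$, so $\sqrt{\deltac'}=\Theta(\sqrt{\deltac})$). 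So the target step is $t^\ast:=t(i)\asymp n\sqrt{\deltac}$; the claim $\theta_{t(i)}=\Theta(\sqrt\deltac)$ is then immediate from Corollary~\ref{cor2:hit} once we show $t(i)$ lands in the right window, and the claim $i<C/\sqrt\deltac$ will follow because the parallel process advances roughly $L_t\asymp n\deltac$ steps of SLOW-STRIP per iteration while $t(i)\asymp n\sqrt\deltac$.

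The key steps, in order: (i) Show $L_t$ stays $\Theta(n\deltac)$ for all $t$ up to $\Theta(n\sqrt{\deltac})$. For the upper bound, sum~\eqn{Ldiff}: $\ex L_t - L_0 \le \sum_{s<t}(\theta_s + O(\deltac))$; since $\theta_s = O(s/n)$ by Corollary~\ref{cor2:hit} (and $O(\sqrt{\deltac})$ before $t_0(K)$ by Lemma~\ref{lem:zeta}(c)\,/\,Corollary~\ref{cor:zeta}), the sum over $t=O(n\sqrt\deltac)$ steps is $O(t\sqrt\deltac) + O(t\deltac)=O(n\deltac)$, so $\ex L_t = O(n\deltac)$; Lemma~\ref{l:azuma} with $c_n=O(1)$ (each step changes $L$ by $O(1)$, here using $k,r$ fixed so the jump is at most $(r-1)(k-1)$) gives concentration with deviation $O(\sqrt{t\log n})=o(n\deltac)$ since $n\deltac \ge n^{1/2+\eps}$ dominates $\sqrt{n\sqrt\deltac\cdot\log n}$. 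For the lower bound, note $\ex(L_{t+1}-L_t\mid\calf_t)\ge\theta_t - (\theta_t+r)L_t/B_t \ge -1 - O(\deltac)$ crudely, but more usefully: once $\theta_t\ge 0$ (i.e. $t\ge t_0(K)$) the drift is $\ge -O(\deltac)$, and for $t<t_0(K)$ the drift is $\ge -C\sqrt\deltac$, so over $O(n\sqrt\deltac)$ steps $L_t$ cannot drop below $L_0 - O(n\sqrt\deltac\cdot\sqrt\deltac)=L_0-O(n\deltac)$; choosing $\vs$ small enough makes the constant in $-C\sqrt{\vs\deltac}$ small so this stays $\Theta(n\deltac)$ — this is where $\vs$ small is used. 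Apply Lemma~\ref{l:azuma} to $-L_t$ for concentration. (ii) Relate iterations to steps: during the parallel process iteration $i'$, the number of SLOW-STRIP steps consumed is $L(\hG_{i'})=L_{t(i')}$ by~\eqn{t(i)}, which we have just shown is $\Theta(n\deltac)$. Hence $t(i)=\sum_{i'<i}L_{t(i')}=i\cdot\Theta(n\deltac)$, so choosing $i$ to be the first iteration with $t(i)\ge t_0(K)=Kn\sqrt{\deltac'}$ gives $i=\Theta(t_0(K)/(n\deltac))=\Theta(\sqrt{\vs\deltac}/\deltac)=\Theta(1/\sqrt{\deltac})$, i.e. $i<C/\sqrt\deltac$ for suitable $C$, and $t(i)=\Theta(n\sqrt\deltac)$. (iii) Conclude $\theta_{t(i)}=\Theta(\sqrt\deltac)$ from Corollary~\ref{cor2:hit} (valid since $Kn\sqrt{\deltac'}\le t(i)\le\tau_2(\sigma)$ — we must also check $t(i)\le\tau_2(\sigma)$, which holds because $t(i)=o(n)$ while $\tau_2(\sigma)=\Theta(n)$ as the number of heavy vertices only drops below $\sigma n$ after a linear number of steps), and $L_{t(i)}=\Theta(n\deltac)$ from step (i).

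The main obstacle I anticipate is the interplay in step (i) between the two error terms in the $L_t$ recursion: we need the cumulative effect of the (initially negative) drift $\theta_t$ \emph{not} to drive $L_t$ to $0$ before $t$ reaches $t_0(K)$, since once $L_t$ hits $0$ the recursion~\eqn{Ldiff} is no longer valid (the $-1$ term disappears) and, worse, the parallel process could terminate. The resolution is quantitative: over the interval $[0,t_0(K)]$ of length $Kn\sqrt{\deltac'}$, the negative drift is at worst $-C_1\sqrt{\deltac'}$ per step (Lemma~\ref{lem:zeta}(c) bound on $\htheta_t-\htheta_0$ plus $\htheta_0=-\Theta(\sqrt{\deltac'})$), contributing total decrease $O(Kn\deltac')=O(K\vs\cdot n\deltac)$; since $L_0=\Theta(n\deltac)$ with a constant independent of $\vs$, taking $\vs$ small (depending on $K$, which in turn depends only on $r,k$ via Corollary~\ref{cor:zeta}) keeps $L_t$ bounded below by a constant multiple of $n\deltac$ throughout. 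One has to be slightly careful that Lemma~\ref{l:azuma}'s deviation term $O(\sqrt{t_0(K)}\,\log n)=O(n^{1/2}\deltac^{1/4}\log n)$ is genuinely $o(n\deltac)$; this uses $\deltac\ge n^{-1/2+\eps}$, which gives $n\deltac/(n^{1/2}\deltac^{1/4}\log n)=n^{1/2}\deltac^{3/4}/\log n\ge n^{\eps/4}/\log n\to\infty$. After that, everything is bookkeeping with the already-established corollaries.
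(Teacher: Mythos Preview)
Your proposal is correct and follows essentially the same approach as the paper's proof: establish that $L_t=\Theta(n\deltac)$ throughout $0\le t\le t_0=\Theta(n\sqrt{\vs\deltac})$ via the drift bound from~\eqn{Ldiff} and Lemma~\ref{l:azuma}, with $\vs$ small ensuring the cumulative negative drift $O(n\vs\deltac)$ does not kill $L_0=\Theta(n\deltac)$; then read off $i=O(1/\sqrt\deltac)$ from the fact that each parallel iteration consumes $\Theta(n\deltac)$ SLOW-STRIP steps, and conclude $\theta_{t(i)}=\Theta(\sqrt\deltac)$ from Corollary~\ref{cor2:hit}. One small correction: the number of SLOW-STRIP steps in iteration $i'$ is not exactly $L_{t(i')}$ but rather the number of $r$-tuples incident to $S_{i'}$, which lies in $[L_{t(i')}/r,\,L_{t(i')}]$ --- this does not affect your $\Theta$-level bookkeeping.
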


\proof 

We have $\theta_0\ge -Y_2\sqrt{\vs \deltac}$ for some constant $Y_2>0$ (note that $Y_2$ is independent of $\vs$), since $\deltac'=c'-c_{r,k}=\vs\deltac$. By Corollary~\ref{cor2:hit}, there is a constant $Y_3>0$ (depending on $Y_2$ and the constant in Corollary~\ref{cor2:hit}, but not on $\vs$) such that a.a.s.\ $\theta_{t_0}\ge \sqrt{\vs\deltac}$ for $t_0:=Y_3n\sqrt{\vs \deltac}$. 

By~\eqn{Ldiff}, if $L_t>0$,
\be
\ex(L_{t+1}-L_t\mid \calf_t)=\left(1-\frac{L_t}{B_t}\right)\theta_t-r\frac{L_t}{B_t}+O(n^{-1}).\lab{Ldiff2}
\ee
By Corollary~\ref{cor:zeta1} and~\ref{cor:hit}, a.a.s.\ $\theta_t\ge -2Y_2\sqrt{\vs \deltac}$ for all $0\le t\le t_0$.
  Hence, $(1-L_t/B_t)\theta_t\ge -2Y_2 \sqrt{\vs \deltac}$. Note that the number of vertices in $G_0$ is a.a.s.\ $(\alpha+o(1))n$ by the construction of $G_0$ and by Theorem~\ref{tkim}. This is true for all $G_t$ where $0\le t\le t_0$ as $t_0=o(n)$ by definition. If $L_t<x:=(\a Y_2/r)n \sqrt{\vs \deltac}$, then $rL_t/B_t\le Y_2\sqrt{\vs \deltac} $ since a.a.s.\ $B_t\ge k(\a+o(1))n\ge \a n$. So for all $0\le t\le t_0$ with $L_t>0$ and $L_t<x$,
\[
\ex(L_{t+1}\mid \calf_t)\ge  L_t - 3Y_2 \sqrt{\vs \deltac}.
\]
Applying Lemma~\ref{l:azuma} to $(-L_t)$ (with $a_n=3Y_2\sqrt{\vs\deltac}$, $c_n=kr$ and $j=3Y_2t\sqrt{\vs\delta}$), we immediately have that
a.a.s.\ for all $\log^2 n\le t\le t_0$, we have $L_{t}\ge L_0 - 6Y_2 t \sqrt{\vs \deltac}$. Hence, for all $0\le t\le t_0$, $L_t\ge L_0 - 6Y_2 t_0 \sqrt{\vs \deltac}= L_0-6Y_2Y_3 n\vs \deltac=\Omega(n\deltac)$ because $L_0=\Omega(n\deltac)$ and $\vs$ can be chosen sufficiently small (noting that the above inequality holds trivially for $t<\log^2 n$).
Thus, we have shown that there is a constant $Y_4>0$ such that a.a.s.\ $L_t\ge Y_4 n \deltac$ for all $0\le t\le t_0$.

Next, we prove that $L_t=O(n\deltac)$ for all $0\le t\le t_0$, which then will imply that $L_t=\Theta(n\deltac)$ for all $0\le t\le t_0$. By Lemma~\ref{lem:zeta}(c), Corollary~\ref{cor:hit} and~\eqn{theta-diff}, a.a.s.\ there  are constants $Y_5',Y_5>0$: $\theta_t\le \theta_0+ Y_5' t_0/n+O(n^{-1/2}\log n)\le Y_3Y_5\sqrt{\vs\deltac}$ for all $0\le t\le t_0$.
Then by~\eqn{Ldiff},
\[
\ex(L_{t+1}\mid \calf_t)\le  L_t +Y_3Y_5\sqrt{\vs\deltac}
\]
and thus by Lemma~\ref{l:azuma}, a.a.s.\ for all $\log^2 n\le t\le t_0$, $L_{t}\le L_0+ 2Y_3Y_5 t_0\sqrt{\vs\deltac}$ and therefore, a.a.s.\ for all $0\le t\le t_0$, $L_{t}=O(n\deltac)$ (the equation holds trivially for $t\le \log^2 n$).

Let $I_1$ (which will be the integer $i$ in the statement of this lemma) be the minimum integer that $t(I_1+1)\ge t_0$. We have shown that $L_t\ge Y_4n\xi$ for all $0\le t\le t_0$. So, the total degree of vertices in each $S_i$, $i\le I_1$, is at least $Y_4 n\deltac$. Thus, for each $i\le I_1$, the $i$-th iteration of the parallel stripping process is consist of at least $(Y_4/r) n\xi$ steps of SLOW-STRIP, since at most $r$ points contained in $S_i$ are deleted in every step of SLOW-STRIP. It follows then that
$I_1=O(t_0/ Y_4 n\deltac)$. 
Hence, $I_1=O(1/\sqrt{\deltac})$ as $t_0=Y_3n\sqrt{\vs\xi}$.

We have shown that a.a.s.\ $L_t=\Theta(n\deltac)$ for all $0\le t\le t_0$. We also have $t(I_1)<t_0$ by our definition of $I_1$. So, a.a.s.\ $L(\hG_{I_1})=L_{t(I_1)}=\Theta(n\deltac)$.

We have shown that $\theta_{t_0}=\Theta(\sqrt{\deltac})$. By the definition of $I_1$, $t(I_1)\le t_0\le t(I_1+1)$. Since a.a.s.\ $L(\hG_{I_1})=L_{t(I_1)}=\Theta(n\deltac)$, by~\eqn{theta-diff}, we have $\theta_{t(I_1)}=\theta_{t_0}+O(L(\hG_{I_1})/n)=\Theta(\sqrt{\deltac})$ as $\deltac=o(\sqrt{\deltac})$.

Finally, $t(I_1)\le t_0=Y_3 n\sqrt{\vs\deltac}$ by our definition of $I_1$. We also have $t(I_1)\ge t_0-L(\hG_{I_1})=\Theta(t_0)$ since $L(\hG_{I_1})=L_{t(I_1)}=O(n \xi)=o(t_0)$. This shows that $t(I_1)=\Theta(n\sqrt{\deltac})$ as required and this completes our proof of the lemma by letting $i=I_1$ in the statement of the lemma.
 \qed\smallskip

\remove{************
********************
********************
By~\eqn{Ldiff} and the definition of $\theta_t$, if $L_t>0$,
\[
\ex(L_{t+1}\mid G_t)=L_t+\theta_t-(\theta_t+r)\frac{L_t}{B_t}+O(n^{-1}).
\]
Define $\htau'$ to be the minimum integer that $L_t\ge Y_0 n \sqrt{\vs \deltac}$ for some small constant $Y_0>0$ to be specified later; and define $\htau:=\min\{\htau'-1,t(T_1),\tau_2(\sigma)\}$.
 By Lemma~\ref{lem:theta0}, there is a constant $Y_1>0$ such that a.a.s.\ $\theta_0\ge -Y_1 \sqrt{\vs \deltac}$.
 By Corollaries~\ref{cor:zeta1} and~\ref{cor:hit}, for all $0\le t\le t(T_1)$, $\theta_t\ge -2Y_1 \sqrt{\vs \deltac}$. Hence, $(1-L_t/B_t)\theta_t\ge -2Y_1 \sqrt{\vs \deltac}$. By the definition of $\htau$, for all $t\le\htau$, $L_t/B_t\le (Y_0/\sigma)\sqrt{\vs \deltac} $ since $B_t\ge \sigma n$. So
\[
\ex(L_{t+1}\mid G_t)\ge L_t-\Big(2Y_1 \sqrt{\vs} +(r Y_0/\sigma)\sqrt{\vs} \Big)\sqrt{\deltac}+O(n^{-1})\ge L_t - 3Y_1 \sqrt{\vs \deltac},
\]
by choosing $Y_0<\sigma Y_1/r$.
Applying Lemma~\ref{l:azuma} to $(-L_t)_{t\ge 0}$ (with $a_n=3Y_1\sqrt{\vs \deltac}$, $c_n=r$ and $j=ta_n$) we have that for every $0\le t_1<t_2\le \htau$ with $t_2-t_1\ge \log^2 n/\sqrt{\deltac}$,
\be
\pr(L_{t_2}\le L_{t_1}-t\cdot 6Y_1 \sqrt{\vs \deltac}) \le \exp\Big(-\Omega\big(\sqrt{\vs \deltac}(t_2-t_1)\big)\Big)<n^{-2}. \lab{Lbound}
\ee
By Lemma~\ref{lem:L0}, a.a.s.\ $D_0=L_0=\Theta(n \cdot\deltac)$; this implies that a.a.s.\ $D_0/r\le t(1)\le D_0$, since each step of SLOW-STRIP removes at most $r$ points in the vertices in $S_0$. Condition on that. By~\eqn{Lbound}, with probability at least $1-n^{-2}$,
$D_1=L_{t(1)}\ge L_0-6Y_1\sqrt{\vs \deltac}\cdot  t(1)\ge D_0(1-6Y_1 \sqrt{\vs} n^{-\d/2})$. Inductively, for each $i\ge 1$, with probability at least $1-n^{-2}$, $D_{i+1}\ge D_i(1-6Y_1 \sqrt{\vs \deltac})$, provided $D_i\ge \log^2 n/\sqrt{\deltac}$. Note that $\htau\le t(T_1)$. Since $T_1=1/\sqrt{\deltac}$, we have that for some constant $Y_2>0$, $D_0 (1-Y_2\sqrt{\vs \deltac})^{i} =\Omega(D_0)\ge n^{\d/2}\log^2 n$ for all $0\le i\le T_1$. Hence, inductively
we have a.a.s.\  $D_{i+1}\ge D_i(1-Y_2 \sqrt{\vs \deltac})$ for all $i\ge 0$ such that $t(i)\le \htau$.

Assume $t(T_1)\le \htau$. Then, by the above discussion and noting that $D_0=\Omega(n\cdot\deltac)$, we have
\be
\sum_{i=0}^{T_1} D_i \ge D_0 C_2 /\sqrt{\vs\deltac} \ge C_3 n\sqrt{\deltac}/\sqrt{\vs}, \lab{sumD}
\ee
 for some constant $C_2,C_3>0$.
 Since in each step of SLOW-STRIP, at most $r$ points contained in light vertices are removed, we must have
 \[
 t(T_1)\ge (C_3/r) n\sqrt{\deltac}/\sqrt{\vs}\ge (C_3/r) n\sqrt{\deltac'}/\sqrt{\vs}
  \]
  (since $\deltac\ge \deltac'$ as long as $\vs<1$). By Corollary~\ref{cor2:hit}, by  Choosing $\vs$ sufficiently small (so that $C_3/r\sqrt{\vs}$ is sufficiently large), a.a.s.\  $\theta_{t(T_1)}\ge (Y_3C_3/r)\sqrt{\deltac}/\sqrt{\vs}$ for some constant $Y_3>0$ and $L_{t(T_1)}=D_{T_1}\ge D_0 \exp(-Y_2\sqrt{\vs})=\Omega(n\deltac)$ which implies our lemma.

Now we may assume that $t(T_1)>\htau$. Let $i_0$ be the maximum integer such that $t(i_0)\le \htau$. Thus, $i_0<T_1$. Then, by the definition of $\htau$, $L_{t'}\ge Y_0n\sqrt{\vs \deltac}$ for some $t(i_0)\le t'\le t(i_0+1)$. Then, for all $t>t'$, either $L_t\ge Y_0n\sqrt{\vs \deltac}$ or
\[
\ex(L_{t+1}\mid G_t)\ge L_t-3Y_1\sqrt{\vs \deltac}.
\]
Assume $\htau=o(n)$, then it is easy to see that $L_t\ge Y_0n\sqrt{\vs \deltac}/2$ for all $t'\le t\le \htau$, since a.a.s.\ it takes $\Omega(n)$ steps for $L_t$ to drop below $Y_0n\sqrt{\vs \deltac}/2$ from $Y_0n\sqrt{\vs \deltac}$.
In this case, we will certainly have~\eqn{sumD} as $Y_0n\sqrt{\vs \deltac}/2$ is much greater than $D_0$. Therefore, it follows also that a.a.s.\ $\theta_{t(T_1)}=\Omega(\sqrt{\deltac})$ and $L_{t(T_1)}=\Omega(n\deltac)$.

Assume $\htau=\Omega(n)$; then $t_1=\Omega(n)$ and the lemma is implied by Corollaries~\ref{cor2:zeta} and~\ref{cor2:hit}. It is trivial to verify $L_t=\Omega(n)$ as well and so our claim follows.
******************
********************
}

Let $I_1$ be the integer specified in Lemma~\ref{lem:D}; so a.a.s.\ $\theta_{t(I_1)}=\Theta(\sqrt{\deltac})$, $L_{t(I_1)}=\Theta(n\deltac)$ and $I_1=O(1/\sqrt{\deltac})$.

%



\begin{lemma}\lab{lem:D2} Suppose $\eps>0$ is sufficiently small. There are constants $Y_1,Y_2>0$ such that
a.a.s.\ for all $t(I_1)\le t\le \eps n$, $Y_1 t^2/n\le L_t\le Y_2 t^2/n$.
\end{lemma}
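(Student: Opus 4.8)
\emph{The plan} is to iterate the one-step drift estimate~\eqn{Ldiff2} over the window $t(I_1)\le t\le\eps n$. Gather first the inputs available there. By Lemma~\ref{lem:D}, a.a.s.\ $t(I_1)=\Theta(n\sqrt{\deltac})$ and $L_{t(I_1)}=\Theta(n\deltac)$; since $\deltac'=\vs\deltac$, taking $\vs$ small enough forces $t(I_1)\ge Kn\sqrt{\deltac'}$, so Corollary~\ref{cor2:hit} gives a.a.s.\ $\theta_t=\Theta(t/n)$ for all $t(I_1)\le t\le\tau_2(\sigma)$; and for $\eps$ small enough $\eps n\le\tau_2(\sigma)$ (as $N_t$ drops by $O(1)$ per step from $N_{t(I_1)}=\Theta(n)$), so $B_t=\Theta(n)$ throughout. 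Heuristically~\eqn{Ldiff2} then reads $\ex(L_{t+1}-L_t\mid\calf_t)\approx\theta_t=\Theta(t/n)$ as long as $L_t/B_t$ is small, which integrates to $L_t\approx\sum_{s\le t}\Theta(s/n)=\Theta(t^2/n)$. Because of the feedback term $-rL_t/B_t$, I would first prove the upper bound and then bootstrap to the lower bound.

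\emph{Upper bound.} From~\eqn{Ldiff2}, $\ex(L_{t+1}-L_t\mid\calf_t)\le\theta_t+O(n^{-1})\le C_2t/n+O(n^{-1})$ on the window, with $C_2$ as in Corollary~\ref{cor2:hit}. The drift is linear in $t$, so instead of applying Lemma~\ref{l:azuma} to $L_t$ directly I subtract a deterministic quadratic: since $\tfrac{C_2}{2n}\big((t+1)^2-t^2\big)=C_2t/n+O(n^{-1})$, the process $W_t:=L_t-\tfrac{C_2}{2n}t^2-\tfrac{C_0}{n}t$ is, for a large enough constant $C_0$, a supermartingale on $[t(I_1),\eps n]$ with $O(1)$ increments there (using $|L_{t+1}-L_t|\le kr$ and $t\le\eps n$). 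By Lemma~\ref{l:azuma} and a union bound over $t$, a.a.s.\ $W_t\le W_{t(I_1)}+O(\sqrt n\log n)$ throughout. As $W_{t(I_1)}\le L_{t(I_1)}=O(n\deltac)=O(t(I_1)^2/n)$, and $\deltac\ge n^{-1/2+\eps}$ makes $\sqrt n\log n=o(n\deltac)=o(t(I_1)^2/n)$ while $\tfrac{C_0}{n}t=o(t^2/n)$, this yields $L_t\le Y_2t^2/n$ throughout for a suitable constant $Y_2$.

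\emph{Lower bound.} On the event just proved, $L_t/B_t\le Y_2t^2/(k\sigma n^2)=O(\eps\,t/n)$ for $t\le\eps n$, so~\eqn{Ldiff2} together with $\theta_t\ge C_1t/n$ gives $\ex(L_{t+1}-L_t\mid\calf_t)\ge(1-O(\eps^2))C_1t/n-O(\eps\,t/n)-O(n^{-1})\ge\tfrac12 C_1t/n$ once $\eps$ is small and $t\ge t(I_1)$ (so $C_1t/n$ dominates the $O(n^{-1})$). The process $V_t:=L_t-\tfrac{C_1}{4n}t^2+\tfrac{C_0'}{n}t$ is then, for a large enough constant $C_0'$, a submartingale on the window with $O(1)$ increments, so Lemma~\ref{l:azuma} applied to $(-V_t)$ gives a.a.s.\ $V_t\ge V_{t(I_1)}-O(\sqrt n\log n)$ throughout. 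Writing $L_{t(I_1)}\ge\gamma n\deltac$ for a constant $\gamma>0$ independent of $\vs$ and using $t(I_1)^2/n=O(\vs n\deltac)$ (both from the proof of Lemma~\ref{lem:D}), we get $V_{t(I_1)}\ge L_{t(I_1)}-\tfrac{C_1}{4n}t(I_1)^2\ge\tfrac{\gamma}{2}n\deltac$ once $\vs$ is small; since this is $\gg\sqrt n\log n$, a.a.s.\ $V_t\ge\tfrac{\gamma}{4}n\deltac>0$, whence $L_t\ge\tfrac{C_1}{4n}t^2-\tfrac{C_0'}{n}t+\tfrac{\gamma}{4}n\deltac\ge Y_1t^2/n$ for a suitable $Y_1>0$ (the linear term is swallowed since $t\ge t(I_1)\to\infty$). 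Together with the upper bound this would prove the lemma.

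\emph{The main obstacle} is that the drift in~\eqn{Ldiff2} is not constant; shifting by a deterministic quadratic reduces matters to the zero-drift Hoeffding--Azuma setting of Lemma~\ref{l:azuma}. The bootstrap --- prove the upper bound, then feed it into the $rL_t/B_t$ term --- is what makes the lower bound work, and to close it every error term ($O(\sqrt n\log n)$ from Azuma, $O(\vs n\deltac)$ from the starting value) must be dominated by $\Theta(t^2/n)$, which is exactly where $\deltac\ge n^{-1/2+\eps}$ and the freedom to shrink $\vs$ get used. Finally,~\eqn{Ldiff2} is valid only while $L_t>0$, so one actually runs both martingales up to the first time $L_t$ would leave the window $\big[\tfrac{C_1}{8n}t^2,\,2Y_2t^2/n\big]$ and checks, by a routine stopping-time argument, that a.a.s.\ this does not happen before step $\eps n$.
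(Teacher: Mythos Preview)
Your proposal is correct and follows the same bootstrap structure as the paper's proof: control $\theta_t=\Theta(t/n)$ on the window via Corollary~\ref{cor2:hit}, prove the upper bound $L_t\le Y_2t^2/n$ from the crude drift estimate $\ex(L_{t+1}-L_t\mid\calf_t)\le\theta_t+O(n^{-1})$, then feed this back into the $rL_t/B_t$ term of~\eqn{Ldiff2} to secure the lower drift bound and close the lower bound. The only difference is a technical one in how the non-constant drift is absorbed: you subtract a deterministic quadratic and apply Lemma~\ref{l:azuma} once to the resulting zero-drift (super/sub)martingale, whereas the paper, for the lower bound, partitions $[t(I_1),\eps n]$ into blocks of length $t_0=t(I_1)$ and applies Lemma~\ref{l:azuma} on each block with the local constant drift; both devices are standard and yield the same conclusion with comparable effort.
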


\proof Let $t_0=t(I_1)$.  We will first prove the upper bound.  It certainly  holds a.a.s.\ for $t=t_0$, as a.a.s.\ $t_0=\Theta(n\sqrt{\deltac})$, $\theta_{t_0}=\Theta(\sqrt{\deltac})$ and $L_{t_0}=\Theta(n\deltac)$ by Lemma~\ref{lem:D}. By Lemma~\ref{lem:zeta}(c), Corollary~\ref{cor:hit} and the fact that a.a.s.\ $\theta_{t_0}=\Theta(\sqrt{\deltac})$, a.a.s.\ $C_1 t/n\le \theta_t\le C_2 t/n$ for some constants $C_1,C_2>0$ for all $t_0\le t\le \eps n$. By~\eqn{Ldiff2}, a.a.s.\ for every $t_0\le t\le \eps n$ (noting that $\theta_t>0$ in this range),
 \[
\ex(L_{t+1}-L_t\mid \calf_t)\le \theta_t+O(n^{-1})\le \frac{2C_2t}{n}.
\]
 Then by Lemma~\ref{l:azuma}, for all $t_0+\log^2 n\le t\le \eps n$, a.a.s.\ $L_{t}\le L_{t_0}+4C_2 t(t-t_0)/n \le Y_2 t^2/n$ by choosing sufficiently large $Y_2$. For all $t$ between $t_0$ and $t_0+\log^2 n$, $L_t=L_{t_0}+O(\log^2 n)$ and thus the upper bound trivially holds for sufficiently large $Y_2$, since $Y_2t^2/n\ge Y_2t_0^2/n=\omega(\log^2 n)$.

 Next, we prove the lower bound.
Clearly, it  holds a.a.s.\ for $t_0$ for some constant $Y_1=Y$. Up to step $\eps n$, at most $(r-1)\eps n$ heavy vertices can be removed, and the number of heavy vertices in $G_{0}$ is $(\a+o(1))n$, as we have discussed before. Hence, the total degree, $B_t$, of $G_t$, for any $t\le \eps n$, is at least $k(\alpha - r\eps) n$.
 Now, by the upper bound we have just shown, a.a.s.\ for all $t_0\le t\le \eps n$, $L_t\le Y_2 t^2/n$ and hence for all $t$ in this range,
 \[
 \frac{rL_t}{B_t}\le \frac{rY_2 t^2/n}{k(\a-r\eps)n}\le (rY_2\eps/\a)\frac{t}{n}<(C_1/4)\frac{t}{n}<\frac{1}{4},
   \]
   by choosing $\eps>0$ sufficiently small.
   Then,
by~\eqn{Ldiff2}, a.a.s.\ for every $t_0\le t\le \eps n$,
\be
\ex(L_{t+1}-L_t\mid \calf_t)\ge \frac{3}{4}\theta_t-(C_1/4)\frac{t}{n}+O(n^{-1})\ge (C_1/3)\frac{t}{n},\lab{L}
\ee
as $\theta_t\ge C_1t/n$ for all $t$ in this range.

We split the range $t_0\le t\le \eps n$ into intervals, each with length $t_0$ (hence the $j$-th interval is from $(j-1)t_0$ to $jt_0$) and the last interval is simply the remainder.
Similar to our analysis in Lemma~\ref{lem:D}, for the $j$-th interval,
the probability that $L_t<L_{(j-1)t_0}+(C_1/4) ((j-1)t_0/n)\cdot(t-(j-1)t_0)$ is at most $n^{-2}$ for all $(j-1)t_0+\log^2 n\le t\le jt_0$, and we always have $L_t=L_{(j-1)t_0}+O(t-(j-1)t_0)$ for all $(j-1)t_0\le t\le (j-1)t_0+\log^2 n$.

Since $t_0=\Theta(n\sqrt{\deltac})$, the total number of intervals is $O(1/\sqrt{\deltac})$. So, a.a.s.\ for every interval $j$,
\bean
L_{(j-1)t_0+d}&\ge& L_{(j-1)t_0}+\frac{C_1 (j-1)}{4n}t_0 d,\ \mbox{if}\ d\ge \log^2 n\\
L_{(j-1)t_0+d}&=&L_{(j-1)t_0}+O(d),\ \mbox{if}\ d<\log^2 n.
\eean
It is easy to verify that by choosing $Y'= \min\{Y, C_1/12\}$,
a.a.s.\ $L_t\ge Y' t^2/n$ for all $t_0\le t\le \eps n$ except for the first $\log^2 n$ numbers in each interval. But then the inequality must hold by choosing $Y_1=Y'/2$ since $t_0^2/n$ is $\omega(\log^2 n)$. This completes the proof for the lower bound. \qed\smallskip

Suppose $\eps>0$ is chosen to satisfy Lemma~\ref{lem:D2}. Define
\be
I_2=\max\{i:\ t(I)\le \eps n \}.\lab{I2}
 \ee 
 In the following lemma, we bound $I_2$.

\begin{lemma}\lab{lem:I2}
A.a.s.\ $I_2=O(\deltac^{-1/2}\log (1/\deltac))$.
\end{lemma}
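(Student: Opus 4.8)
The plan is to turn the lower bound on the growth of $L_t$ from Lemma~\ref{lem:D2} into a bound on the number of parallel iterations needed to reach $t = \lceil \eps n \rceil$. Recall that $t(i)$ is the SLOW-STRIP step at which the $i$-th parallel iteration begins, and that iteration $i$ of the parallel process corresponds to exactly $L(\hG_i) = L_{t(i)}$ many points of $S_i$ being deleted, which requires at least $L_{t(i)}/r$ steps of SLOW-STRIP. Hence $t(i+1) \ge t(i) + L_{t(i)}/r$, so the question is: how many increments of size $\ge L_{t(i)}/r$ are needed to go from $t(I_1) = t_0 = \Theta(n\sqrt{\deltac})$ up to $\eps n$?

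First I would split the range $[t_0, \eps n]$ into two regimes. In the range $t_0 \le t \le \sqrt{\deltac}\, n \cdot (\text{large constant})$, i.e.\ while $t = \Theta(n\sqrt\deltac)$, Lemma~\ref{lem:D2} only guarantees $L_t = \Omega(n\deltac)$, so each iteration advances $t$ by $\Omega(n\deltac/r)$; since this regime has width $O(n\sqrt\deltac)$, it is traversed in $O(n\sqrt\deltac / (n\deltac)) = O(1/\sqrt\deltac)$ iterations. (Alternatively, one can fold this into the general estimate below with a $\log$ factor to spare.) For the main regime, suppose the $i$-th iteration starts at step $t(i) =: s$. By Lemma~\ref{lem:D2}, $L_s \ge Y_1 s^2/n$, so
\[
t(i+1) \ge s + \frac{Y_1 s^2}{rn}, \qquad \text{equivalently} \qquad t(i+1) \ge s\left(1 + \frac{Y_1 s}{rn}\right).
\]
Since $s \ge t_0 = \Theta(n\sqrt\deltac)$ throughout, we have $Y_1 s/(rn) \ge c_0\sqrt\deltac$ for a constant $c_0 > 0$ as long as $s$ stays of order $n\sqrt\deltac$; more usefully, writing $s = \gamma n$ with $\gamma$ the current "fraction", the recursion $\gamma \mapsto \gamma(1 + (Y_1/r)\gamma)$ is a standard slowly-accelerating recursion. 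The number of steps to go from $\gamma_0 = \Theta(\sqrt\deltac)$ to a constant $\gamma = \eps$ is $O(1/\gamma_0) = O(1/\sqrt\deltac)$ if the multiplicative factor were bounded away from $1$ by $\Omega(\gamma)$ the whole way — but in the early part $\gamma$ is only $\Theta(\sqrt\deltac)$, so the factor is $1 + \Theta(\sqrt\deltac)$, giving geometric growth with ratio $1 + \Theta(\sqrt\deltac)$; multiplying $t$ by a constant factor then costs $O(\log(1/\sqrt\deltac)/\sqrt\deltac) = O(\deltac^{-1/2}\log(1/\deltac))$ iterations, and once $t$ has grown by a constant factor the per-step multiplier has doubled, so the total over all doubling phases is a geometric sum dominated by its first term, again $O(\deltac^{-1/2}\log(1/\deltac))$.

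Concretely, the cleanest way to carry this out is: partition $[t_0, \eps n]$ into $O(\log(1/\deltac))$ dyadic blocks $[2^j t_0, 2^{j+1} t_0]$ (there are $O(\log(\eps n / t_0)) = O(\log(1/\sqrt\deltac))$ of them since $t_0 = \Theta(n\sqrt\deltac)$). Within the $j$-th block, for every iteration $i$ with $t(i)$ in that block we have $L_{t(i)} \ge Y_1 (2^j t_0)^2/n$, so each iteration advances $t$ by at least $Y_1 (2^j t_0)^2/(rn)$; since the block has width $2^j t_0$, the number of iterations spent in block $j$ is at most
\[
\frac{2^j t_0}{Y_1 (2^j t_0)^2/(rn)} = \frac{rn}{Y_1\, 2^j t_0} \le \frac{rn}{Y_1 t_0} = O(1/\sqrt\deltac),
\]
using $t_0 = \Theta(n\sqrt\deltac)$ and $2^j \ge 1$. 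Summing over the $O(\log(1/\deltac))$ blocks gives $I_2 - I_1 = O(\deltac^{-1/2}\log(1/\deltac))$, and since $I_1 = O(1/\sqrt\deltac)$ by Lemma~\ref{lem:D}, we conclude $I_2 = O(\deltac^{-1/2}\log(1/\deltac))$ as claimed. The main obstacle — and the place to be careful — is the bookkeeping linking parallel iterations to SLOW-STRIP steps via~\eqn{t(i)}: one must ensure the "$\ge L_{t(i)}/r$ steps per iteration" lower bound is applied with $L$ evaluated at the \emph{start} of the iteration and that the a.a.s.\ event from Lemma~\ref{lem:D2} (which is uniform over $t \in [t(I_1), \eps n]$) covers all the iteration-start times simultaneously, which it does since there are at most $n$ of them and the lemma's conclusion is already a single a.a.s.\ statement over the whole range. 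A minor additional point: one should check that the process does in fact reach step $\eps n$ (rather than terminating earlier with a nonempty core or exhausting the queue), but this is guaranteed since $L_t > 0$ throughout $[t_0, \eps n]$ by the lower bound of Lemma~\ref{lem:D2}.
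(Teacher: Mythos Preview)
Your argument is correct, and it follows a slightly different path from the paper's. Both proofs rest on the same two ingredients: the lower bound on $L_t$ coming from the analysis in Lemma~\ref{lem:D2}, and the fact that each parallel iteration occupies at least $L_{t(i)}/r$ SLOW-STRIP steps. The difference is in how these are combined. The paper goes back to the drift estimate~\eqn{L}, crudely bounds $t\ge t_0$ to get $\ex(L_{t+1}-L_t\mid\calf_t)\ge C_2\sqrt{\deltac}$, and then applies Lemma~\ref{l:azuma} across each parallel iteration to obtain the multiplicative growth $L_{t(i+1)}\ge (1+(C_2/2)\sqrt{\deltac})L_{t(i)}$; the bound on $I_2-I_1$ then falls out from $L_{t(I_1)}=\Theta(n\deltac)$ and $L_{t(I_2)}=O(n)$. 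You instead take the pointwise conclusion $L_t\ge Y_1 t^2/n$ of Lemma~\ref{lem:D2} as a black box and track $t(i)$ rather than $L_{t(i)}$, using a dyadic decomposition of $[t_0,\eps n]$. Your route has the virtue of not re-invoking the martingale inequality; the paper's route is a touch shorter once~\eqn{L} is already in hand. Incidentally, your per-block count $rn/(Y_1 2^j t_0)$ is actually geometric in $j$, so your argument in fact yields $I_2-I_1=O(\deltac^{-1/2})$ before the crude step; the extra $\log(1/\deltac)$ factor you retain is harmless but unnecessary at this point.
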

\proof We have shown in~\eqn{L} that a.a.s.\ for all $t_0\le t\le t(I_2)$, where $t_0=t(I_1)$,
\[
\ex(L_{t+1}-L_t\mid \calf_t)\ge C_1t_0/n\ge C_2 \sqrt{\deltac},
\]
for some constant $C_1,C_2>0$. Recalling~\eqn{t(i)} and by Lemma~\ref{l:azuma},
this immediately gives that a.a.s.\
\[
L_{t(i+1)}\ge (1+(C_2/2)\sqrt{\deltac}) L_{t(i)}, \ \mbox{for all}\ I_1\le i\le I_2-1.
\]
Since $L_{t(I_1)}=\Theta(n\deltac)$ by Lemma~\ref{lem:D} and $L_{t(I_2)}=O(n)$, it follows immediately that $I_2-I_1=O(\deltac^{-1/2}\log (1/\deltac))$. The lemma follows as $I_1=O(1/\sqrt{\deltac})$ by Lemma~\ref{lem:D}. \qed\smallskip


Recall from~\eqn{tau2} that $\tau_2(\sigma)$ is the last step after which $N_t\ge \sigma n$ (c.f.\ $\tau_1(\sigma)$ for the sequence $(\M_t)$, defined in Section~\ref{sec:M}).

\begin{lemma}\lab{lem:tau3} There is a constant $\eps_0>0$ such that
a.a.s.\ $L_t\ge \eps_0 n$ for all $t(I_2)\le t\le \tau_2(\sigma)$.
\end{lemma}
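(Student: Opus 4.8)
The plan is to treat the range $t(I_2)\le t\le\tau_2(\sigma)$ in three moves: first pin down that $L_{t(I_2)}$ is already of linear order in $n$; then observe that $L_t$ has a uniformly positive one‑step drift whenever it falls to a small constant fraction of $n$; and finally run a barrier argument through Lemma~\ref{l:azuma} to conclude that a.a.s.\ it never falls that low before step $\tau_2(\sigma)$. Throughout, write $t_2:=t(I_2)$; by~\eqn{I2} we have $t_2\le\eps n<t(I_2+1)$, and by~\eqn{Ldiff2} there is an absolute constant $C_L$ with $|L_{t+1}-L_t|\le C_L$ for all $t$.

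\emph{Step 1: $L_{t_2}=\Theta(n)$.} I would take $t=\lfloor\eps n\rfloor$ in Lemma~\ref{lem:D2} (legitimate since a.a.s.\ $t(I_1)=o(n)$ by Lemma~\ref{lem:D}, so $t(I_1)\le\lfloor\eps n\rfloor$) to get a.a.s.\ $L_{\lfloor\eps n\rfloor}\ge Y_1\eps^2 n/2$. A single iteration $i$ of the parallel process consists of at most $L_{t(i)}+R_i$ steps of SLOW-STRIP, where $R_i$ is the number of degree-$0$ light vertices of $\hG_i$ and is a.a.s.\ $o(n)$ (cf.\ the proof of Lemma~\ref{lem:L0}), since each non-trivial step of that iteration removes a point lying in a light vertex currently being processed. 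Hence $|\lfloor\eps n\rfloor-t_2|\le t(I_2+1)-t_2\le L_{t_2}+R_{I_2}$, so $L_{\lfloor\eps n\rfloor}\le L_{t_2}+C_L(L_{t_2}+R_{I_2})$, which forces $L_{t_2}\ge c_0 n$ for a constant $c_0>0$; and feeding in the a priori bound $L_{t_2}\le Y_2 t_2^2/n\le Y_2\eps^2 n$ from Lemma~\ref{lem:D2}, also $t_2\ge\eps n-(L_{t_2}+R_{I_2})\ge\eps n/2$ once $\eps$ is small and $n$ large. In particular $t_2\gg Kn\sqrt{\deltac'}$, so Corollary~\ref{cor2:hit} provides a constant $c_\theta>0$ with $\theta_t\ge c_\theta$ for all $t_2\le t\le\tau_2(\sigma)$.

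\emph{Steps 2 and 3: positive drift and barrier.} Next I would fix $\eps_0>0$ small enough that $\eps_0\le c_0/2$, $\,2\eps_0/(k\sigma)\le\tfrac12$ and $\,2r\eps_0/(k\sigma)\le c_\theta/8$. For $t_2\le t\le\tau_2(\sigma)$ one has $B_t\ge k\sigma n$ since $N_t\ge\sigma n$, so whenever additionally $L_t\le 2\eps_0 n$, relation~\eqn{Ldiff2} gives
\be
\ex(L_{t+1}-L_t\mid\calf_t)\ \ge\ \tfrac12\,\theta_t-\tfrac{2r\eps_0}{k\sigma}+O(n^{-1})\ \ge\ \tfrac{c_\theta}{4}\lab{tau3drift}
\ee
for $n$ large. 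Now suppose $L_t<\eps_0 n$ for some $t_2<t\le\tau_2(\sigma)$. Since $L_{t_2}\ge c_0 n\ge 2\eps_0 n$, let $t_1\in[t_2,t)$ be the last step with $L_{t_1}\ge 2\eps_0 n$; then $L_s<2\eps_0 n$ for all $t_1<s<t$, so~\eqn{tau3drift} applies at each such $s$, while $L_{t_1+1}-L_t\ge(2\eps_0 n-C_L)-\eps_0 n=\eps_0 n-C_L$. Applying Lemma~\ref{l:azuma} to $(-L_s)_{t_1+1\le s\le t}$ (with $a_n=-c_\theta/4$, $c_n=C_L$, $j=\eps_0 n-C_L$) and using $t-t_1\le\tau_2(\sigma)=O(n)$ bounds the probability of this by $\exp(-\Omega(n))$; a union bound over the $O(n^2)$ pairs $(t_1,t)$, on the a.a.s.\ event that the conclusions of Corollary~\ref{cor2:hit} and $N_t\ge\sigma n$ hold throughout $[t_2,\tau_2(\sigma)]$ (so that~\eqn{tau3drift} is genuinely valid and Lemma~\ref{l:azuma} may be applied to the process stopped when either condition first fails), then finishes the proof.

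I expect the only genuinely delicate point to be Step~1: one must ensure a single parallel iteration near $t\approx\eps n$ cannot consume so many SLOW-STRIP steps that $t_2$ falls far below $\eps n$, which is precisely what makes $L_{t_2}$ of order $n$ — and one must check that the several ``$\eps$ small'' and ``$\eps_0$ small'' requirements are mutually compatible, which they are since each is just an upper bound on the relevant constant. The drift-plus-barrier part is routine, closely mirroring the analysis of $(L_t)$ in Lemmas~\ref{lem:D} and~\ref{lem:D2}.
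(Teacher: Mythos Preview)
Your approach is essentially the same as the paper's: establish that $L$ is already of linear order at the start of the range, show $\theta_t$ is bounded below by a positive constant throughout, and then run a drift-plus-barrier argument via Lemma~\ref{l:azuma}. Steps~2 and~3 match the paper's proof almost verbatim (the paper's barrier event $A_t$ is defined on a window of length $\eta n/kr$ rather than your pairwise union over $(t_1,t)$, but this is cosmetic).

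The one soft spot is Step~1. You try to show $L_{t(I_2)}=\Theta(n)$ directly, which forces you to bound the length of iteration $I_2$ and leads you to claim $R_{I_2}=o(n)$ ``cf.\ the proof of Lemma~\ref{lem:L0}''. That citation does not quite apply: Lemma~\ref{lem:L0} concerns vertices dropping to degree $0$ under removal of \emph{uniformly random} $r$-tuples from a configuration with minimum degree $k$, whereas $R_{I_2}$ counts heavy vertices that lost all $k$ incident tuples during iteration $I_2-1$ of the stripping process --- a different mechanism. The fix is easy: either bound $R_{I_2}\le |S_{I_2}|\le (r-1)L_{t(I_2-1)}\le (r-1)Y_2\eps^2 n$ using the upper bound of Lemma~\ref{lem:D2} at $t(I_2-1)\le\eps n$, which is small enough once $\eps$ is small; or, more cleanly, do what the paper does and avoid the iteration-length issue altogether by picking an auxiliary point $t_1=\eps_0 n$ with $\eps_0=\eps/(1+Y_2)<\eps$, applying Lemma~\ref{lem:D2} directly at $t_1$ to get $L_{t_1}=\Theta(n)$, and then checking $t_1\le t(I_2)$ via $t(i_1+1)\le \eps_0 n+L_{\eps_0 n}\le \eps_0(1+Y_2)n=\eps n$ where $i_1=\max\{i:t(i)\le\eps_0 n\}$. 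The barrier argument then runs from $t_1$ rather than $t(I_2)$, covering a superset of the desired range.
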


\proof We first prove that a.a.s.\ there is $t_1\le t(I_2)$ such that $L_{t_1}=\Theta(n)$. By Lemma~\eqn{lem:D2}, for all $t(I_1)\le t\le \eps n$, 
\be
Y_1 t^2/n\le L_t\le Y_2t^2/n,\lab{Ltbounds}
\ee
for some constants $Y_1,Y_2>0$. Let $\eps_0=\eps/(1+Y_2)$. Then,
$\eps_0<\eps$ and so $L_{\eps_0 n} \le Y_2 \eps_0^2 n$. Let $i_1=\max\{i:\ t(i)\le \eps_0 n\}$. Then, $t(i_1)\le \eps_0 n< t(i_1+1)$. Moreover $t(i_1+1)\le \eps_0 n + L_{\eps_0 n} \le \eps_0 n+ Y_2\eps_0^2 n\le \eps_0(1+Y_2) n =\eps n$. Let $t_1=\eps_0 n$. Then $t_1\le t(i_1+1)\le I_2$ by~\eqn{I2}; moreover, $L_{t_1}$ satisfies~\eqn{Ltbounds}. So, $L_{t_1}=\Theta(n)$.

By Corollaries~\ref{cor2:zeta} and~\ref{cor2:hit}, there is a constant $\eps_1>0$ such that a.a.s.\
$\theta_t\ge \eps_1$ for all $t_1\le t\le \tau_2(\sigma)$. We may assume $\eps_1<1$.
    For all $t\le\tau_2(\sigma)$ we have $B_t\ge k\sigma n$.
    
     Let $\eta=\min\{(Y_1/2)\eps_0^2, (\eps_1k\sigma/12r)\}$. Next, we prove a.a.s.\ for all $t_1\le t\le \tau_2(\sigma)$, $L_t\ge \eta n$. For $t=t_1$, this is true since a.a.s.\ $L_{t_1}\ge 2\eta n$ by~\eqn{Ltbounds}. Let $A_t$ be the event that $L_t\ge 2\eta n -kr$ and $L_{t'}<2\eta n$ for all $t\le t'\le t+\eta n/kr$. Since $L_t$ changes by $kr$ in each step, if $L_{t'}\le \eta n$ for some $t_1\le t'\le \tau_2(\sigma)$, then $A_t$ must occur for some $t_1\le t\le t'-\eta n/kr$. Next, we bound the probability of $A_t$. We may assume $2\eta n -kr\le L_t<2\eta n$ since otherwise $A_t$ does not hold.
     Since $L_t$ changes by $kr$ in each step, we have
     $L_{t'}\le 3\eta n\le (\eps_1 k\sigma/4r)n$ for all $t\le t'\le t+\eta n/kr$. Then, $rL_t/B_t \le \eps_1/4$. Then by~\eqn{Ldiff}, for all $t\le t'\le t+\eta n/kr$,
\[
\ex(L_{t+1}-L_t\mid \calf_t)\ge \eps_1/2.
\]
     By Lemma~\ref{l:azuma},
     \[
     \pr(A_t)\le \pr(L_{t+\eta n/kr}<2\eta n \mid L_t\ge 2\eta n-kr)=o(n^{-1}).
     \]
     By the union bound, the probability that $A_t$ occurs for some $t$ is $o(1)$ and thus, a.a.s.\ $L_t\ge \eta n$ for all $t_1\le t\le \tau_2(\sigma)$. \qed\smallskip

Now we complete the proof of Theorem~\ref{thm:Stripsub}(a).
By the definition of $I_{\sigma}$ in~\eqn{Isigma},  $\hG_{I_{\sigma}+1}$ contains less than $\sigma n$ vertices. By Lemma~\ref{lem:small}, every component in $\hG_{I_{\sigma}+2}$ has $O(\log n)$ vertices. It was proved in~\cite{g2} that each of such components has stripping number $O(\log\log n)$ (the basic idea there is that, a.a.s.\ every subgraph of $AP_r(n,cn)$ (for any $c=\Theta(1)$) with size $O(\log n)$ is so sparse that, when the parallel stripping process is applied to it, a positive proportion of vertices are stripped off in each round, and thus a total of $O(\log\log n)$ iterations is sufficient). Hence, a.a.s.\ $s(G_0)\le I_{\sigma}+2+O(\log\log n)$. By Lemma~\ref{lem:tau3}, a.a.s.\  $|S_i|=\Omega(L_{t(i)})=\Omega(n)$ for every $I_2\le i\le I_{\sigma}$, and so a.a.s.\ $I_{\sigma}-I_2=O(1)$. By  Lemmas~\ref{lem:I2}, a.a.s.\ $I_2=O(\deltac^{-1/2}\log(1/\deltac))$. Recall that $G_0=\hH_{\tau'(B)}$ by definition~\eqn{G0}.  It follows then that a.a.s.\
$s(\hH_{\tau'(B)})=s(G_0)=O(\deltac^{-1/2}\log(1/\deltac)+\log\log n)$.
Then Theorem~\ref{thm:Stripsub}(a) follows by~\eqn{couple},~\eqn{tauprimeBound} and Corollary~\ref{ccon}.\qed

\subsection{Inside the critical window: proof of Theorem~\ref{thm:Stripsub}(b)}
\lab{sec:b}

Now we fix a constant $\eps>0$, and consider $c$ such that $|c-c_{r,k}|\le n^{-1/2+\eps}$. Again, we may assume that $\eps$ is sufficiently small.
Let $c'=c_{r,k}+n^{-1/2+2\eps}$. By Theorem~\ref{thm:Stripsuper}(a), a.a.s.\ $\tau(H')=O(n^{1/4-\eps}\log n)$. Since $c'$ satisfies all conditions in~\eqn{condc}, all lemmas and corollaries in Sections~\ref{sec:M} and~\ref{sec:theta} hold. However, we note here that $\deltac$ and $\deltac'$ are no longer of the same order, and so we cannot replace $\deltac'$ by $\deltac$ in any asymptotic expression. By Lemma~\ref{lem:theta0} we have
$\zeta_0=\zeta+\Theta(n^{-1/4+\eps})$ and $\theta_0=\theta-\Theta(n^{-1/4+\eps})$.

Let $G_0$ be as defined in~\eqn{G0}. Recall that $G_0,G_1,\ldots$ is the sequence produced by SLOW-STRIP. Let $\htau$ denote the step when SLOW-STRIP terminates. Since $c$ is inside the critical window $c_{r,k}+O(n^{-1/2+\eps})$, whether $G_{\tau}$ is empty or not is not a.a.s.\ certain.

Let $K>0$ be a constant to be determined later; define $t_1=K n^{3/4+\eps}$ (i.e.\ $t_1=Kn\sqrt{\deltac'}$).
\begin{lemma} \lab{lem:bigL}
Assume $K>0$ is sufficiently large and $\eps$ is sufficiently small. Then, a.a.s.\ either $\htau\le t_1/2$; or there is $t\le t_1$ that $L_t=\Omega(n^{1/2+2\eps})$.
\end{lemma}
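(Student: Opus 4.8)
The idea is to follow $(L_t)_{0\le t\le t_1}$ through the recursion~\eqref{Ldiff}, using the estimates on $\theta_t$ from Sections~\ref{sec:M} and~\ref{sec:theta}. By Lemma~\ref{lem:L0}, a.a.s.\ $L_0=\Theta((c'-c)n)$; since $c'=c_{r,k}+n^{-1/2+2\eps}$ and $|c-c_{r,k}|\le n^{-1/2+\eps}=o(n^{-1/2+2\eps})$ we have $c'-c=\Theta(n^{-1/2+2\eps})$, so a.a.s.\ $L_0=\Theta(n^{1/2+2\eps})$, while Lemma~\ref{lem:theta0} gives $\theta_0=-\Theta(n^{-1/4+\eps})<0$. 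By Corollaries~\ref{cor:zeta1} and~\ref{cor:hit}, a.a.s.\ $\theta_t\ge 2\theta_0=-\Theta(n^{-1/4+\eps})$ for all $t\le t_1$, and by Corollary~\ref{cor2:hit} --- whose threshold constant, call it $K_0$, is fixed and taken much smaller than $K$ --- a.a.s.\ $\theta_t\ge C_1 t/n$ for all $K_0 n^{3/4+\eps}\le t\le t_1$. Thus $L_t$ can drift downward only over an initial window of length $O(n^{3/4+\eps})$ whose size does \emph{not} depend on $K$, after which the drift of $L_t$ is positive and grows linearly in $t$.

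For the drift: while SLOW-STRIP is running and $L_t\le\varepsilon_0 n^{1/2+2\eps}$ for a fixed $\varepsilon_0>0$, only $O(t)=o(n)$ heavy vertices have been deleted by step $t\le t_1$, so $N_t=(\a+o(1))n$, $B_t\ge k\sigma n$, and $rL_t/B_t=o(n^{-1/4+\eps})$; as the $\theta_t<0$ term helps rather than hurts in~\eqref{Ldiff}, this gives a.a.s.\ $\ex(L_{t+1}-L_t\mid\calf_t)\ge\theta_t-o(n^{-1/4+\eps})$, and in particular $\ge\tfrac12\theta_t$ once $K_0 n^{3/4+\eps}\le t\le t_1$ (there $\theta_t=\Omega(n^{-1/4+\eps})$ dominates $rL_t/B_t$). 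All concentration statements below are obtained by splitting the time range into blocks of length $\Theta(n^{1/2})$, on which the drift is essentially constant, and applying Lemma~\ref{l:azuma} on each block with a union bound, exactly as in the proofs of Lemmas~\ref{lem:D2} and~\ref{lem:tau3}.

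Now the dichotomy. Suppose, for contradiction, that neither alternative holds: $\htau>t_1/2$ and $L_t<\varepsilon_0 n^{1/2+2\eps}$ for every $t\le t_1$. Since $\htau>t_1/2$, SLOW-STRIP is running on $[0,t_1/2]$, so $L_t\ge 1$ there; and for $K$ large, $t_1/2>K_0 n^{3/4+\eps}$. On $[K_0 n^{3/4+\eps},t_1/2]$ the drift is $\ge\tfrac12\theta_t\ge\tfrac{C_1}{2}(t/n)>0$, so the block-wise Azuma argument keeps $L_t$ positive there and yields a.a.s.\ $L_{t_1/2}\ge 1+\tfrac12\sum_{K_0 n^{3/4+\eps}<t\le t_1/2}(C_1 t/n)-o(n^{1/2+2\eps})=\Omega\big((K^2-4K_0^2)n^{1/2+2\eps}\big)$. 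For $K$ sufficiently large this exceeds $\varepsilon_0 n^{1/2+2\eps}$, contradicting $L_{t_1/2}<\varepsilon_0 n^{1/2+2\eps}$. Hence a.a.s.\ either $\htau\le t_1/2$, or $L_t\ge\varepsilon_0 n^{1/2+2\eps}=\Omega(n^{1/2+2\eps})$ for some $t\le t_1$; choosing $K$ last, large in terms of $r,k,\varepsilon_0,K_0,C_1$, completes the proof.

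The main care is needed in the final martingale bookkeeping: one must check that the cumulative error from the $\Theta(n^{1/4+\eps})$ block-wise Azuma applications over $[K_0 n^{3/4+\eps},t_1/2]$ is $o(n^{1/2+2\eps})$ (this uses the smallness of $\eps$), so that the accumulated drift, which is quadratic in the window length and hence $\Omega(K^2 n^{1/2+2\eps})$, genuinely forces a surviving $L_t$ to overshoot the fixed threshold $\varepsilon_0 n^{1/2+2\eps}$. That quadratic lower bound on the accumulated drift is the conceptual point, and it is the only reason ``$K$ large'' is needed --- both to make it overshoot and to ensure $K_0 n^{3/4+\eps}<t_1/2$. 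The remaining inputs (sign and linear growth of $\theta_t$, negligibility of $rL_t/B_t$ while $N_t=\Omega(n)$) are already available from Sections~\ref{sec:M}--\ref{sec:theta}.
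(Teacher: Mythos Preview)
Your argument is correct and essentially the same as the paper's: once $t\gtrsim n\sqrt{\deltac'}=n^{3/4+\eps}$ one has $\theta_t=\Theta(t/n)>0$, and accumulating this drift over a window of length $\Theta(n^{3/4+\eps})$ forces $L_t=\Omega(n^{1/2+2\eps})$ unless SLOW-STRIP has already stopped; the paper works on $[t_1/2,t_1]$ with a single Azuma application and an explicit stopping time $\tau_3$ (in place of your contradiction framing and block decomposition), which is slightly cleaner but not materially different. One remark: since you observe $L_0=\Theta(n^{1/2+2\eps})$, the lemma \emph{as literally stated} is immediate with $t=0$; the real content of the drift argument (both yours and the paper's) is to locate such a $t$ with $t\ge t_1/2$, which is the hypothesis Lemma~\ref{lem:fast} actually requires.
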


\proof  

Since $t_1=Kn\sqrt{\deltac'}$,
 by Corollary~\ref{cor2:hit} , provided $K>0$ is sufficiently large, we have a.a.s.\ either $\htau\le t_1$ or $C_1 t/n\le \theta_t \le C_2 t/n$ for some constants $C_1,C_2>0$ and for all $t_1/2\le t\le \tau_2(\sigma)$.

Let $Y_3>0$ be a constant to be specified later.
Define $\tau_3$ to be the minimum integer $t\ge t_1/2$ such that $L_{t+1}\ge Y_3 n^{3/4+\eps}$. If no such integer $t$ exists, then define $\tau_3=n$. Define $T_1=\min\{t_1,\tau_3,\tau_2(\sigma)\}$. Then, for all $t_1/2\le t\le T_1$, we have $L_t<Y_3 n^{3/4+\eps}$ and $B_t\ge k\sigma n$. Therefore, $L_t/B_t\le (Y_3/k\sigma) n^{-1/4+\eps}$. Hence $1-L_t/B_t\ge 2/3$ and $rL_t/B_t\le (rY_3/k\sigma)n^{-1/4+\eps}$. By~\eqn{Ldiff2}, for all $t_1/2\le t\le T_1$,
\[
\ex(L_{t+1}-L_t\mid \calf_t) \ge(2C_1/3) \frac{t}{n} - (rY_3/k\sigma)n^{-1/4+\eps}+O(n^{-1}) \ge (C_1K/4) n^{-1/4+\eps},
\]
by choosing $Y_3<k\sigma C_1 K/12r$, since $t\ge t_1/2=(K/2)n^{3/4+\eps}$.

We may assume that $\tau_3\ge t_1$ since otherwise there is $t_1/2\le t\le t_1$ such that $L_t\ge Y_3n^{3/4+\eps}$ and so claim of the lemma is verified. By the definition of $\tau_2(\sigma)$, we have $\tau(\sigma)>t_1$ always, as $\tau_2(\sigma)$ is linear in $n$. Hence, we may assume that $T_1=t_1$.

Applying Lemma~\ref{l:azuma} to $(L_t)_{t=t_1/2}^{t_1}$ (with $a_n=-(C_1K/4)n^{-1/4+\eps}$, $c_n=kr$ and $j=-ta_n/2$), we have a.a.s.\
$L_{t_1}\ge (C_1K/8)n^{-1/4+\eps}(t_1/2) = (C_1 K^2/16) n^{1/2+2\eps}$.
This means that a.a.s.\ either $\htau\le t_1/2$, or there is $t_1/2\le t\le t_1$ such that $L_t\ge Y_3n^{3/4+\eps}$ (i.e.\ $\tau_3\ge t_1$), or $L_{t_1}=\Omega(n^{1/2+2\eps})$. In each case, the claim of the lemma is verified (as $Y_3n^{3/4+\eps}>n^{1/2+2\eps}$ by taking $\eps<1/4$). \qed \smallskip

We may assume that there is a $t_1/2\le t_0\le t_1$ such that $L_{t_0}\ge n^{1/2+2\eps}$, since otherwise $\htau\le t_1$ by Lemma~\ref{lem:bigL} and so $s(G_0)\le t_1=O(n^{3/4+\eps})$. Next, we prove that the parallel stripping process does not take long from $G_{t_0}$.

\begin{lemma} \lab{lem:fast}
Assume $L_{t_0}\ge n^{1/2+2\eps}$ for some $t_1/2\le t_0\le t_1$. Then, a.a.s.\ $s(G_{t_0})=O(n^{1/2})$.
\end{lemma}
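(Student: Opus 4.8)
\textbf{Proof plan for Lemma~\ref{lem:fast}.}
The plan is to run the argument in two phases: first show $L_t$ grows (roughly quadratically, mirroring Lemma~\ref{lem:D2}) until it reaches linear size, then show it stays linear until only $\sigma n$ heavy vertices remain, and finally invoke the $O(\log\log n)$ bound for the sparse leftover. Concretely, starting from $t_0$ with $L_{t_0}\ge n^{1/2+2\eps}$, I would first note that $t_0\ge t_1/2 = (K/2)n^{3/4+\eps} = (K/2)n\sqrt{\deltac'}$, so by Corollary~\ref{cor2:hit} a.a.s.\ $C_1 t/n\le \theta_t\le C_2 t/n$ for all $t_0\le t\le \tau_2(\sigma)$ (as long as the process has not terminated; we assume it hasn't, else $s(G_{t_0})=O(n^{3/4+\eps})$ trivially, which contradicts termination being far off — but more carefully, if it terminates we are already done). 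Since $\theta_t=\Theta(t/n)$ is positive and bounded below by $\Theta(n^{-1/4+\eps})$ on this range, and $B_t\ge k\sigma n$, the drift~\eqn{Ldiff2} gives $\ex(L_{t+1}-L_t\mid\calf_t)\ge (3/4)\theta_t - rL_t/B_t + O(n^{-1})$; as long as $L_t = o(n)$ the term $rL_t/B_t$ is negligible compared to $\theta_t$ on an interval where $t$ is comparable to its starting value, so $L_t$ is nondecreasing in drift and in fact increases. Following the interval-splitting technique of Lemma~\ref{lem:D2} (intervals of length $\Theta(t_0)$, applying Lemma~\ref{l:azuma} on each and handling the first $\log^2 n$ steps of each interval trivially since $t_0^2/n = \omega(\log^2 n)$), I would show a.a.s.\ $L_t \ge Y_1 t^2/n$ for all $t_0\le t\le \eps n$ as long as $L_t$ has not yet reached linear size, and hence there is some $t_2 = O(\sqrt{n\cdot n}) = O(n)$ — in fact $t_2$ at most a constant times $\sqrt{n}\cdot\sqrt{n} = n$, but one checks $t_2 = O(n^{3/4+\eps}\cdot\text{const})$ is not enough; rather $L_t\ge Y_1 t^2/n$ reaches $\eta n$ when $t = \Theta(n)$. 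This is where I must be slightly careful: the bound $s(G_{t_0})=O(n^{1/2})$ counts \emph{iterations of the parallel process}, not SLOW-STRIP steps.

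The key point for the $O(n^{1/2})$ bound on the number of \emph{parallel} iterations is that once $L(\hG_i)$ is large, each parallel iteration consumes many SLOW-STRIP steps and conversely few parallel iterations are needed. Precisely: using~\eqn{t(i)}, if $L(\hG_i) = L_{t(i)}$, then the $i$-th iteration spans at least $L_{t(i)}/r$ SLOW-STRIP steps (each step removes at most $r$ points of $S_i$), so $t(i+1)\le t(i) + $ (total degree removed), and more usefully $t(i+1) - t(i) \ge L_{t(i)}/(kr)$ roughly. Hmm — actually I want an \emph{upper} bound on the number of iterations. The right approach, as in the proof of Lemma~\ref{lem:I2}: on the range where $\theta_t\ge C\sqrt{\deltac'} = C n^{-1/4+\eps}$ is the relevant lower bound — wait, here $\theta_{t_0}\ge C_1 t_0/n\ge (C_1K/2)n^{-1/4+\eps}$, so by~\eqn{L}-type reasoning $\ex(L_{t+1}-L_t\mid\calf_t)\ge C t_0/n \ge C' n^{-1/4+\eps}$ for $t\ge t_0$; then Lemma~\ref{l:azuma} applied across one parallel iteration (which has length $\ge L_{t(i)}/r$) gives $L_{t(i+1)}\ge (1 + cn^{-1/4+\eps}/(\text{something}))L_{t(i)}$ — I need to track this multiplicatively. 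Since $L_{t(i+1)} - L_{t(i)} \gtrsim (t(i)/n)\cdot(t(i+1)-t(i)) \gtrsim (t_0/n)\cdot(L_{t(i)}/r)$, and $t_0/n = \Theta(n^{-1/4+\eps})$, we get $L_{t(i+1)}\ge (1 + c n^{-1/4+\eps})L_{t(i)}$; starting from $L_{t(i_0)}\ge n^{1/2+2\eps}$ and growing until $L_{t(i)} = \Theta(n)$, the number of iterations needed is $O(n^{1/4-\eps}\log n) = O(n^{1/2})$. That handles the growth phase.

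For the linear phase, once $L_t\ge \eta n$ I would argue exactly as in Lemma~\ref{lem:tau3}: a.a.s.\ $L_t = \Omega(n)$ for all remaining steps up to $\tau_2(\sigma)$ (barrier argument via the event $A_t$ and the union bound, using $\theta_t\ge \eps_1$ from Corollaries~\ref{cor2:zeta} and~\ref{cor2:hit} on the linear range, so the drift of $L_t$ is bounded below by a positive constant whenever $L_t$ dips toward $\eta n$). Consequently $|S_i| = \Omega(L(\hG_i)) = \Omega(n)$ for every parallel iteration $i$ with $t(i)$ in this linear range, so only $O(1)$ such iterations occur before fewer than $\sigma n$ vertices remain; then Lemma~\ref{lem:small} shows every component of the next configuration has size $O(\log n)$, and the result of~\cite{g2} cited at the end of Section~\ref{sec:a} gives that stripping these off takes $O(\log\log n)$ further iterations. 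Summing: $s(G_{t_0}) \le O(n^{1/2}) + O(1) + O(\log\log n) = O(n^{1/2})$, as claimed.

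\textbf{Main obstacle.} The delicate part is the bookkeeping converting SLOW-STRIP step counts into parallel-iteration counts during the growth phase — specifically verifying that the multiplicative growth factor per parallel iteration is $1 + \Omega(n^{-1/4+\eps})$ uniformly (this rests on $t(i)\ge t_0 = \Theta(n^{3/4+\eps})$ staying true, which it does since $t$ only increases, and on $L_{t(i)} = o(n)$ during this phase so the $-rL_t/B_t$ correction in~\eqn{Ldiff2} does not overwhelm the positive $\theta_t$ term) and that the concentration from Lemma~\ref{l:azuma} is strong enough over each iteration-length interval, handling the short-interval exceptions of length $\le\log^2 n$ exactly as in Lemmas~\ref{lem:D} and~\ref{lem:D2}. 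The rest is a faithful rerun of the subcritical argument with $\deltac$ replaced by $\deltac' = n^{-1/2+2\eps}$, keeping in mind the remark that $\deltac$ and $\deltac'$ are now of different orders so no substitution between them is allowed.
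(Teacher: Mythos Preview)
Your approach is correct but considerably more elaborate than the paper's. You run a three-phase argument: (i) track quadratic growth $L_t\ge Y_1 t^2/n$ \`a la Lemma~\ref{lem:D2}, convert to a multiplicative per-iteration growth factor $1+\Omega(n^{-1/4+\eps})$, and count $O(n^{1/4-\eps}\log n)$ parallel iterations until $L_t$ is linear; (ii) a linear plateau via Lemma~\ref{lem:tau3}; (iii) the sparse tail via Lemma~\ref{lem:small}. This works, and the bookkeeping you flag (keeping $rL_t/B_t$ dominated by $\theta_t=\Theta(t/n)$, handling the $\le\log^2 n$ exception windows) goes through exactly as in Lemmas~\ref{lem:D} and~\ref{lem:D2} with $\deltac$ replaced by $\deltac'$.

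The paper, however, bypasses all of this. It never tracks the growth of $L_t$ at all: it simply proves by a barrier argument that a.a.s.\ $L_t\ge n^{1/2}$ for every $t_0\le t\le\tau_2(\sigma)$. Concretely, it defines the bad event $A_i=\{L_i\ge 2n^{1/2}-kr\ \text{and}\ L_j<2n^{1/2}\ \text{for all}\ i\le j\le i+n^{1/2}/kr\}$, uses the uniform drift lower bound $\ex(L_{t+1}-L_t\mid\calf_t)\ge (CK/2)n^{-1/4+\eps}$ (valid while $L_t<2n^{1/2}$, so $L_t/B_t=o(n^{-1/4+\eps})$ automatically) together with Lemma~\ref{l:azuma} to get $\pr(A_i)=o(n^{-1})$, and union-bounds. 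Once $L_t\ge n^{1/2}$ always, each parallel iteration removes at least $n^{1/2}$ total degree, so there are at most $O(n/n^{1/2})=O(n^{1/2})$ iterations before reaching $\sigma n$ vertices; Lemma~\ref{lem:small} then finishes in $O(\log n)$ further rounds.

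The trade-off: your argument extracts a sharper iteration count $O(n^{1/4-\eps}\log n)$ for the growth phase and mirrors the structure of Section~\ref{sec:a}, but requires rerunning the interval-splitting concentration and tracking $L_t$ against $\theta_t$ through several regimes. The paper's barrier argument is weaker in what it proves about $L_t$ (only the floor $n^{1/2}$, not growth) but gets the stated $O(n^{1/2})$ bound in a single stroke, with no phase decomposition and no need to worry about when $rL_t/B_t$ becomes comparable to $\theta_t$.
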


\proof Since $t_0\ge (K/2)n^{3/4+\eps}=(K/2)n\sqrt{\deltac'}$, by Corollary~\ref{cor2:hit}, provided that $K>0$ is sufficiently large, we have $\theta_{t}\ge 2C t/n \ge CKn^{-1/4+\eps}$ for all $t_0\le t\le \tau_2(\sigma)$, for some constant $C>0$. We will prove that a.a.s.\ there is no $t_0\le t\le \tau_2(\sigma)$ such that $L_t<n^{1/2}$. Let $A_t$ be the event that $L_t\ge 2n^{1/2}-kr$, and $L_j<2n^{1/2}$ for all $t\le j\le t+n^{1/2}/kr$. Since $|L_{t+1}-L_t|\le kr$ always, $L_{t_0}\ge n^{1/2+2\eps}$, if $i$ is the first step after $t_0$ such that $L_i<n^{1/2}$, then, there must be some $i'\le i-n^{1/2}/kr$ such that $A_{i'}$ occurs. Hence, it is sufficient to prove that a.a.s.\ there is no $t_0-n^{1/2}/kr\le i\le \tau_2(\sigma)-n^{1/2}/kr$ for which $A_i$ occurs. 

Since $L_{t_0}\ge n^{1/2+2\eps}$, $A_i$ cannot occur for any $t_0-n^{1/2}/kr\le i\le t_0$ by definition of $A_i$. Next,
for each $t_0<i\le \tau_2(\sigma)-n^{1/2}/kr$, we bound the probability of $A_i$. We may assume that $2n^{1/2}-kr\le L_i<2n^{1/2}$ since otherwise, $A_i$ does not hold by definition. Let $T$ be the minimum integer such that $T>i$ and $L_{T+1}\ge 2n^{1/2}$. Define $T=n$ if no such integer exists. Now, for all $i\le t\le T$, $L_t<2n^{1/2}$ and so
 $L_t/B_t=o(n^{-1/4+\eps})$. By~\eqn{Ldiff2} and the fact that $\theta_t\ge CKn^{-1/4+\eps}$ for all $t_0\le i\le t\le \tau_2(\sigma)$,
\be
\ex (L_{t+1}-L_t\mid \calf_t)\ge (1/2)\theta_t+o(n^{-1/4+\eps})\ge (CK/2)n^{-1/4+\eps}.\lab{diff}
\ee
Now,
\[
\pr(A_i)\le \pr(T\ge i+n^{1/2}/kr) = \pr(L_{t}<2n^{1/2},\ \forall i\le t\le i+n^{1/2}/kr).
\]
However, since $L_i\ge n^{1/2}$ by assumption, by applying Lemma~\ref{l:azuma} to~\eqn{diff}, with probability $1-o(n^{-1})$, we must have $L_{i+n^{1/2}/kr}\ge L_i +\Omega(n^{1/4+\eps})>2n^{1/2}$. Hence, $\pr(A_i)=o(n^{-1})$ for every $i$. This confirms that a.a.s.\ $L_t\ge n^{1/2}$ for all $t_0\le t\le \tau_2(\sigma)$. Therefore, a.a.s.\ the total degree of the configuration decreases by at least $n^{1/2}$ in each iteration of the parallel stripping process applied to $G_{t_0}$, until there are at most $\sigma n$ vertices remaining.
Now $s(G_{t_0})=O(n/n^{1/2}+\log n)=O(n^{1/2})$ by Lemma~\ref{lem:small}.
 \qed \smallskip

Now we complete the proof of Theorem~\ref{thm:Stripsub}(b). If SLOW-STRIP terminates by step $t_1$, then
$s(G_0)=O(n^{3/4+\eps})$. Hence,
$s(H)\le \tau'(B)+1+s(G_0)=O(n^{3/4+\eps})$ by~\eqn{couple},~\eqn{G0} and~\eqn{tauprimeBound}. If not, by Lemma~\ref{lem:bigL} we may assume that there is a step $t_0\le t_1$ such that $L_{t_0}\ge n^{1/2+2\eps}$. By Lemma~\ref{lem:fast}, a.a.s.\ $s(G_{t_0})=O(n^{1/2})$. Hence, a.a.s.\ $s(H)\le \tau'(B)+1+t_1+s(G_{t_0})=O(n^{3/4+\eps})$. This implies part (b) of Theorem~\ref{thm:Stripsub}. \qed

\section{Bounding the maximum depth}

Let $H_0\in AP_r(n,cn)$ where $c=c_{r,k}-\xi_n$, and let $H_0,H_1,\ldots,$ denote the parallel stripping process. For simplicity, we write $\xi_n$ as $n^{\delta}$ ($\delta$ may depend on $n$). Our goal is to show that the maximum depth of all the non-$k$-core vertices of $AP_r(n,cn)$ is a.a.s.\ $n^{\Theta(\delta)}$.
 
Recall that $S_i$ is the set of light vertices in $H_i$. Let $\imax$ denote the last iteration of the parallel stripping process; i.e.\ $\imax=s(H_0)$.
 Let $v$ be a vertex in $S_{\imax-1}$. Then every stripping sequence ending with $v$ must contain at least one vertex in each $S_i$, $1\le i\le \imax-1$. Hence, the depth of $v$ is at least $I_{\max}-1$, which is a.a.s.\ $\Omega(n^{\d/2})$ by Theorem~\ref{thm:Stripsuper}(b,c). This verifies the lower bound in Theorem~\ref{thm:depthsub}. The main challenge of this section is to prove the upper bound. 

Let $\Psi=v_1v_2\cdots$ be a stripping sequence that contains all non-$k$-core vertices of a hypergraph $H$. Let $\cald(\Psi)$ be a digraph constructed as follows.
The set of vertices in $\cald(\Psi)$ is $V(H)$. At the moment $v_i$ is removed from the hypergraph $H$, consider each hyperedge $x$ that is incident with $v_i$ before the removal of $v_i$, add a directed edge $u\rightarrow v$ to $\cald(\Psi)$, for each of the other $\dd-1$ vertices $u\in x$.  For any vertex $v\in H$, we define $R_{\Psi}^+(v)$ to be the set of vertices reachable from $v$ in $\cald(\Psi)$.
Clearly,  $R_{\Psi}^+(v)$ forms a stripping sequence ending with $v$ and so $|R^+(v)|$ is an upper bound of the depth of $v$.
We will prove the following stronger statement than the upper bound in Theorem~\ref{thm:depthsub}. 
\begin{theorem}\lab{depth2} 
Let $r,k\geq2, (r,k)\neq (2,2)$ be fixed and assume $c=c_{r,k}-n^{-\d}$ where $n^{
\d}\ge \log^7 n$ (here $\d$ can depend on $n$). Then, a.a.s.\ there is a stripping sequence $\Psi$ of $\calh_r(n,cn)$ containing all non-$k$-core vertices such that 
\begin{equation}\lab{emt2}
|R_{\Psi}^+(v)|= n^{O(\d)}\quad \mbox{for all $v\in\Psi$}.
\end{equation}
\end{theorem}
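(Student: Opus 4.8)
The plan is to build the stripping sequence $\Psi$ and the digraph $\cald(\Psi)$ hand in hand, exploiting the coupling and SLOW-STRIP machinery already developed for Theorem~\ref{thm:Stripsub}. As in Section~\ref{sec:coupling}, I would first couple $H_0\in AP_r(n,cn)$ with $H'\in AP_r(n,c'n)$ for a suitable $c'=c_{r,k}+\vs\deltac$ with $\deltac=n^{-\d}$, so that $H_0\subseteq H'$. The key point is that a vertex $v$'s depth in $H_0$ splits naturally into two regimes: the ``early'' iterations of the parallel stripping process applied to $H_0$ (those up to iteration $\tau'(B)+1$, where the configuration reaches $G_0=\hH_{\tau'(B)}$ close to the critical size $\a n$), and the ``late'' iterations governed by $G_0,G_1,\dots$ (the SLOW-STRIP analysis). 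For the early regime I would invoke Theorem~\ref{thm:depthsuper} applied to $H'$: since $H_0\subseteq H'$, any vertex stripped during the coupled slowed-down process inherits a reachable set no larger than its reachable set in $H'$, which by~\eqref{thm:depthsuper} is $\xi_n'^{-O(1)} = (\vs\deltac)^{-O(1)}=n^{O(\d)}$. So the only work is the late regime starting from $G_0$.

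For the late regime, I would track, along the SLOW-STRIP process $G_0,G_1,\dots$, not merely $L_t$ but a bound on $\max_{v} |R_\Psi^+(v)|$ where the maximum ranges over vertices removed by step $t$. Each SLOW-STRIP step removes an $r$-tuple incident with a light vertex $v$ at the front of the queue, creating at most $(k-1)(r-1)$ new light vertices, each of which acquires an out-edge in $\cald(\Psi)$ to $v$. The reachable set of a newly-created light vertex $u$ is $\{u\}$ together with the reachable set of $v$ at that moment. The crucial observation is that $|R_\Psi^+(v)|$ for a vertex $v$ removed at SLOW-STRIP step $t$ is controlled by how long ago (in SLOW-STRIP time) the ancestor chain of $v$ was created: if $v$ was pushed onto $\Q$ at step $t'$, it is processed at step $t'+O(|\Q|$ at time $t')$, and $|\Q|$ at any time is $\Theta(L_t/r)$. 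Since Lemmas~\ref{lem:D},~\ref{lem:D2} give $L_t=\Theta(\max\{n\deltac, t^2/n\})$ once $t\ge t(I_1)$, and $L_0=\Theta(n\deltac)$, the queue is never longer than $O(n)$ and the number of SLOW-STRIP steps after which $N_t$ drops below $\sigma n$ is $O(n)$. One then needs to argue the reachable set of any vertex in $G_0$-land is at most a polynomial (in the relevant scale) blowup per iteration of the \emph{parallel} process, and there are $I_\sigma=O(\deltac^{-1/2}\log(1/\deltac))+O(\log\log n)=n^{O(\d)}$ such iterations by the proof of Theorem~\ref{thm:Stripsub}(a); combined with the bounded branching $(k-1)(r-1)$ and the fact that within a single parallel iteration a vertex's ancestor chain back into $S_{i-1}$ has length $O(L/r)$ per level, this gives $|R_\Psi^+(v)| = (k-1)^{O(I_\sigma)} \cdot \mathrm{poly}(n)$, which is $n^{O(\d)}$ provided $\deltac^{-1/2}=n^{\d/2}$ dominates — hence the condition $n^\d\ge\log^7 n$.

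More carefully, I would define $\Psi$ to be: the stripping sequence induced by the coupled process on $H_0$ up to $\hH_{\tau'(B)}$, followed by the SLOW-STRIP order on $G_0,G_1,\dots$, followed by an arbitrary completion for the $O(\log n)$-size components that remain after $I_\sigma+1$ iterations (using Lemma~\ref{lem:small}). For $v$ in the first part, $|R_\Psi^+(v)|\le |R^+_{H'}(v)| = n^{O(\d)}$ by Theorem~\ref{thm:depthsuper}. For $v$ in the last (small-component) part, $|R^+_\Psi(v)|=O(\log n)$ trivially since its whole component has that size. For $v$ removed in SLOW-STRIP during the $i$-th parallel iteration ($i\le I_\sigma$), I would show $|R_\Psi^+(v)|\le \prod_{j\le i}\big((k-1)(r-1)\cdot |S_j|\big)^{O(1)}$ by induction on $i$: a vertex in $S_i$ has out-edges only to vertices in $S_0\cup\dots\cup S_{i-1}$ whose removal triggered it, and the reachable set is the union over those $O(1)$ predecessors' reachable sets plus $\{v\}$; tracking the total multiplicatively over the $I_\sigma$ levels and bounding each $|S_j|\le n$ gives $|R_\Psi^+(v)| \le (C n)^{O(I_\sigma)} = n^{O(I_\sigma)} = n^{O(\deltac^{-1/2}\log(1/\deltac))} = n^{O(n^{\d/2}\cdot \d \log n)}$. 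Hmm — this is \emph{too large}; a purely multiplicative level-by-level bound overcounts badly. The fix, and the main obstacle, is to show that $R_\Psi^+(v)$ does not grow multiplicatively across all $I_\sigma$ levels but is instead controlled by the size of the configuration reached a bounded number of levels back: intuitively, $R_\Psi^+(v)$ is essentially the set of points ``below'' $v$ in the dependency digraph, and because the digraph $\cald(\Psi)$ has total in-degree $O(1)$ per vertex, $|R_\Psi^+(v)|$ is at most the number of vertices reachable, which telescopes to $O(L_{t(i)})=O(n)$ plus contributions from the few iterations where $L$ was small — this is exactly the kind of estimate proved for the supercritical case in~\cite{gm}, and I would mirror that argument: bound $|R^+_\Psi(v)|$ by a sum $\sum_{j} |S_j|$ over iterations $j$ in a window of length $O(1/\sqrt{\deltac})$ preceding $v$'s removal (using that $L_t$ doubles every $O(1/\sqrt\deltac)$ iterations, so only logarithmically many ``doubling epochs'' contribute a geometric sum), giving $|R^+_\Psi(v)| = O(n\cdot \mathrm{polylog}) \cdot n^{O(\d)} = n^{O(\d)}$. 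The hard part will be making this windowed/telescoping bound rigorous — i.e.\ showing the reachable set of $v$ cannot ``spread out'' over many iterations but stays confined to $n^{O(\d)}$ many vertices — which is where the bounded branching of $\cald(\Psi)$ and the precise growth rate $L_t=\Theta(t^2/n)$ from Lemma~\ref{lem:D2} must be combined carefully, exactly paralleling the upper-bound argument of Theorem~\ref{thm:depthsuper} in~\cite{gm}.
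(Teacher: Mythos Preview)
Your coupling idea for the ``early'' regime is actually reasonable in principle: since $H_0\subseteq H'$ and you use the same removal order, the digraph $\cald$ built from $H_0$'s edges is a subgraph of the one built from $H'$'s edges, so $R^+_{\Psi,H_0}(v)\subseteq R^+_{\Psi,H'}(v)$ for vertices removed in that phase. (You would need to cite the supercritical version of \emph{this} theorem from~\cite{gm}, not Theorem~\ref{thm:depthsuper}, which only bounds depth.) But this is not where the real work lies, and for the ``late'' regime your proposal has a genuine gap.

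The core problem is that you never identify the actual mechanism that keeps $|R^+(v)|$ small. Both of your attempts fail. A per-iteration multiplicative bound with any constant factor $c>1$ gives $c^{I_\sigma}$, which is $\exp(\Theta(n^{\d/2}))$ --- hopeless. Your ``windowed'' idea, confining $R^+(v)$ to $O(1/\sqrt{\deltac})$ iterations, is simply false: $R^+(v)$ reaches all the way back to $S_0$; what is small is its \emph{intersection with each $S_j$}, not its support. The missing idea is the recurrence from~\cite{gm} (Lemma~\ref{lrec} here):
\[
D^-(R_j)\le D^-(R_{j+1})+Z\,\frac{|S_j|}{n}\sum_{\ell>j}D^-(R_\ell)+\log^{14}n,
\]
whose crucial feature is the factor $|S_j|/n$, not a constant. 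Solving it leads to a product $\prod_j\big(1+D\sqrt{|S_j|/n}\big)\le\exp\big(D\sum_j\sqrt{|S_j|/n}\big)$, and the whole game is to show $\sum_j\sqrt{|S_j|/n}=O(\d\log n)$. That sum is controlled by the \emph{profile} of $|S_j|$, which in the subcritical case has three phases (decreasing, roughly constant at $\Theta(n^{1-\d})$ for $O(n^{\d/2})$ iterations, then increasing) --- this is the content of Lemma~\ref{lsisub}, and is where the subcritical analysis genuinely differs from~\cite{gm}. Your proposal recognises at the very end that one must ``mirror~\cite{gm}'', but without the recurrence and without the three-phase $|S_j|$ analysis you have not described a proof; saying ``bounded branching plus $L_t=\Theta(t^2/n)$ must be combined carefully'' is a statement of hope, not a plan.
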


It was shown in~\cite{gm} that~\eqn{emt2} a.a.s.\ holds if $c=c_{r,k}+n^{-\d}$. The analysis for the subcritical case is analogous to the supercritical case. Thus, we summarise the approach in~\cite{gm} and sketch a proof for Theorem~\ref{depth2} by pointing out how the arguments in~\cite{gm} should be adapted. 

Let $H\in AP_r(n,cn)$ and run SLOW-STRIP on $H$. This produces a stripping sequence $\Psi$. Let $\cald=\cald(\Psi)$ be the digraph associated with $\Psi$. This defines $R^+_{\Psi}(v)$ for any non-$k$-core vertex $v$. For simplicity, we drop the subscript $\Psi$ from the notation.

\begin{definition}\lab{def:Rj}
For each $0\le j\le \imax-1$, define $\cals_j$ to be a graph with vertex set $S_j$ and edge set $\{f\cap S_j: f\in E(H_j)\}$.

Take an arbitrary vertex $v\in S_i$; set
$R'_i=R'_i(v):=\{v\}$ and for each $j=i$ to $0$:
\begin{enumerate}
\item[(a)] We set $R_j=R_j(v)$ to be the union of the vertex sets of all components of $\cals_j$ that contain vertices of $R'_j$.
\item[(b)] We set $R'_{j-1}$ to be the set of all vertices $v\in S_{j-1}$ that are adjacent to $\cup_{\ell=i}^{j} R_{\ell}$.
\end{enumerate}
Define $R(v)=\cup_{j=0}^i R_{j}$.
\end{definition}

Note that $\cup_{j\le i}R_j$ contains all vertices reachable from $v$ and therefore $|R^+(v)|\le \cup_{j\le i}|R_j|$. The definition of $R_j(v)$ makes it easier (compared with $|R^+(v)|$) to bound $|R_i|$, given $|R_j|$ ($j>i$).

\subsection{Proof outline for the supercritical case~\cite[Section 5]{gm}}

Starting with a vertex $v\in S_i$, we explore the hyperedges from $v$ to other vertices in $S_i$ and $S_{i-1}$, and then onwards; recursively, we will bound $|R_j|$ for each $j\le i$. We need randomness to allow such analysis; but we can expose $v\in S_i$ only after we expose all vertices in $S_j$, $j<i$ and their incident hyperedges, and that has killed all the randomness. A solution to this obstacle is given in~\cite{gm}. Basically, we will expose all vertices in each $S_i$, $0\le i\le \imax-1$, as well as some degree information. For instance, we need information on the number of total neighbours $S_i$ has in $S_{i-1}$, etc. This procedure is called EXPOSURE. Then we will generate uniformly a random configuration that agrees with the exposed information. This procedure is called EDGE-SELECTION (similar to the configuration model). See~\cite[Section 5.1]{gm} for details. It was proved (\cite[Lemma 46]{gm}) that EDGE-SELECTION generates the random configuration with the correct distribution, conditional on the information exposed in EXPOSURE.

Now, conditional on the set of parameters exposed in EXPOSURE, we can recursively bound $|R_i|$, given $|R_j|$, $j> i$, as EDGE-SELECTION defines a probability space that is easy for such analysis. Analysing SLOW-STRIP allows us to prove a.a.s.\ properties of the set of parameters exposed in EXPOSURE. This gives~\cite[Lemma 49(a--i)]{gm}, where the first three of these properties have been stated in Lemma~\ref{lem:Si}. We will state the corresponding properties for the subcritical case in Section~\ref{sec:subdepth}.

We first introduce two lemmas that allow
us to focus our analysis on $R_j(v)$ for $j$ such that the number of vertices in $H_j$ is close to $\a n$.
The first lemma is proved in~\cite[Lemma 15]{gm}, with $c=c_{r,k}+n^{-\d}$ for some $0<\d<1/2$; but exactly the same proof gives the following statement with all $c=c_{r,k}+o(1)$.
\begin{lemma}\lab{lem:Hb}
Assume $c=c_{r,k}+o(1)$. Given any constant $\eps>0$, there is a constant $B=B(r,k,\eps)>0$ such that after $B$ rounds of the parallel stripping process are applied to $AP_r(n,cn)$, the number of vertices remaining in $H_B$ is $(\a+O(\eps))n$.
\end{lemma}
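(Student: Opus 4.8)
The plan is to reproduce the ``few rounds'' analysis used for \cite[Lemma 15]{gm} (which in turn is the same type of argument as in \cite{molloy}, whose Lemma~\ref{lem:constantRounds} is the $c<c_{r,k}-\eps_0$ instance) and to observe that nothing in it uses anything beyond $c=c_{r,k}+o(1)$; in particular it applies verbatim when $c$ lies slightly \emph{below} $c_{r,k}$. The key point is that we only ever look at a \emph{constant} number $B=B(r,k,\eps)$ of rounds, so every quantity stays comfortably nondegenerate and proximity to the $k$-core threshold is irrelevant.

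First I would set up the deterministic recursion. The degree sequence of $H_0\in AP_r(n,cn)$ is a.a.s.\ ``nice'' (close to $\mathrm{Poisson}(rc)$), and conditioned on a nice degree sequence the AP-model is the configuration model. One round of the parallel stripping process sends a nice \emph{profile} (empirical degree distribution together with the normalised vertex and hyperedge counts) to a nice profile, and on the level of profiles this is an explicit continuous map. A routine expected-value computation together with Hoeffding--Azuma — done exactly as in \cite{molloy,gm} — shows that, for each fixed $i$, the profile of $H_i$ is a.a.s.\ within $o(1)$ of the $i$-th iterate of this map started from the initial profile, and this is uniform in $c$ over any compact neighbourhood of $c_{r,k}$. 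Hence there is a deterministic sequence $\gamma_0(c)=1\ge\gamma_1(c)\ge\gamma_2(c)\ge\cdots$, each $\gamma_i$ a continuous function of $c$, with $|H_i|=(\gamma_i(c)+o(1))n$ a.a.s.\ for every fixed $i$.

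Next I would invoke the standard $k$-core fixed-point theory (\cite{psw,molloy,jhk}, and the discussion around \eqref{krthreshold} and \eqref{murk}): these iterates are the usual ``peeling'' iterates, and the peeling map is monotone; its positive fixed points are exactly the roots of $c=h(\mu)/r$. At $c=c_{r,k}$ this equation has the unique solution $\mu_{r,k}$ (a tangency at the minimum of $h/r$), corresponding to the profile with $\a n$ vertices and $\b n$ hyperedges. Since $\gamma_i(c_{r,k})$ is non-increasing, since the orbit cannot drop below a fixed point of a monotone map (it starts at $\gamma_0=1\ge\a$), and since a bounded monotone orbit converges to a fixed point, we get $\gamma_i(c_{r,k})\downarrow\a$ as $i\to\infty$. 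So there is a constant $B=B(r,k,\eps)$ with $\a\le\gamma_B(c_{r,k})\le\a+\eps/2$. Fixing this $B$ and using continuity of the $B$-fold iterate in $c$ (with $B$ constant), for any $c=c_{r,k}+o(1)$ we get $\gamma_B(c)=\gamma_B(c_{r,k})+o(1)\in[\a-\eps,\a+\eps]$ for $n$ large; combined with $|H_B|=(\gamma_B(c)+o(1))n$ this yields $|H_B|=(\a+O(\eps))n$ a.a.s., as claimed.

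The only place that needs any attention is the extension of the ``few rounds'' concentration statement to $c=c_{r,k}+o(1)$, including $c<c_{r,k}$: one must confirm that \cite[Lemma 15]{gm} and \cite[Lemma 7]{molloy} did not secretly rely on $c$ being bounded away from $c_{r,k}$. They do not — over a constant number $B$ of rounds $|H_i|$ stays $\Theta(n)$ throughout (by the computation above it is near $\a n$), the degree sequences stay nice, and all error terms are uniform over $c$ in a compact neighbourhood of $c_{r,k}$ — so this is routine bookkeeping rather than a genuine obstacle; the real substance of the statement (the limit being precisely $\a$) is imported from the cited $k$-core results. One should also note that a naive coupling with fixed supercritical and subcritical instances does \emph{not} work cleanly here, because the number of stripping rounds needed to reach the $\eps$-window around $\a n$ from above and from below do not match, which is exactly why one works directly with the recursion at $c=c_{r,k}+o(1)$ itself.
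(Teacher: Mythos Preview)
Your proposal is correct and is essentially the same approach the paper takes: the paper does not give a self-contained proof here but simply cites \cite[Lemma~15]{gm} (stated there for $c=c_{r,k}+n^{-\delta}$) and remarks that ``exactly the same proof gives the following statement with all $c=c_{r,k}+o(1)$''. Your write-up is precisely a sketch of what that cited proof does --- track the deterministic peeling recursion for a constant number of rounds, use the $k$-core fixed-point picture at $c=c_{r,k}$ to see the iterates converge to $\alpha$, then invoke continuity in $c$ and standard concentration --- together with the explicit check that nothing in the argument requires $c\ge c_{r,k}$, which is exactly the one-line extension the paper asserts.
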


Given a set of vertices $A$, let $N^s(A)$ denote the set of vertices with distance at most $s$ from $A$. The following lemma is from~\cite[Lemma 34]{amxor}.
\begin{lemma}\lab{lem:neighbours}
Assume $s,c>0$ are $O(1)$. A.a.s.\ for every subset of vertices $A$ in $AP_r(n,cn)$ such that $A$ induces a connected subgraph, $|N^s(A)|=O(|A|+\log n)$.
\end{lemma}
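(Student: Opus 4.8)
This is \cite[Lemma 34]{amxor}; I sketch the proof I would give and indicate where the difficulty lies. First I would reduce to the case $s=1$. If $A$ induces a connected subgraph, then so does $N^1(A)$: every $u\in N^1(A)\setminus A$ lies in some hyperedge $f$ with $f\cap A\neq\emptyset$, and then $f\subseteq N^1(A)$ since every vertex of $f$ is within distance $1$ of $A$, so $f$ joins $u$ to $A$ inside the subgraph induced by $N^1(A)$; together with connectedness of the subgraph induced by $A$ this makes the subgraph induced by $N^1(A)$ connected. Hence $N^s(A)=N^1(N^1(\cdots N^1(A)\cdots))$, and a bound $|N^1(B)|\le K(|B|+\log n)$ valid for every connected $B$ iterates, over the $s=O(1)$ rounds, to $|N^s(A)|\le sK^s(|A|+\log n)=O(|A|+\log n)$.

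So it remains to handle $s=1$, where I would start from the trivial inequality $|N^1(A)|\le |A|+(r-1)\,e_\partial(A)$, with $e_\partial(A)$ the number of hyperedges meeting $A$ (each contributing at most $r-1$ vertices outside $A$). For $|A|\ge\eps n$ this already gives the claim, since $e_\partial(A)\le cn=O(|A|)$, so the real task is to show that a.a.s.\ every connected $A$ with $|A|<\eps n$ satisfies $e_\partial(A)=O(|A|+\log n)$. I would prove this by a breadth-first exploration of $H$ started from $A$: the hyperedges met by $A$ are revealed in a bounded number of waves, and at each wave the number of newly revealed hyperedges is conditionally stochastically dominated by a binomial variable whose mean is a constant times the number of points already exposed; a Chernoff (Poisson-tail) estimate then shows that a wave overshoots a fixed multiple of (current size plus $\log n$) with probability only $e^{-\Omega(|A|+\log n)}$. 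Equivalently, one shows that a.a.s.\ $H$ contains no connected vertex set $W$ with $|W|\le\eps n$ that meets more than $K(|W|+\log n)$ hyperedges --- a first-moment statement over such ``witnesses''.

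The hard part is the union bound: there are $n^{\Theta(|W|)}$ connected sets of each size, so a crude union over all of them loses a super-constant factor (it only yields $|N^1(A)|=O\big((\log n/\log\log n)(|A|+\log n)\big)$, which is not good enough). To recover the constant factor I would exploit that connected sub-hypergraphs of $AP_r(n,cn)$ are almost tree-like --- a.a.s.\ a connected set on $w$ vertices spans only $w/(r-1)+o(w)$ hyperedges, and at most $w$ of them once $w=O(\log n)$ --- which both restricts the number of admissible witnesses and, combined with a.a.s.\ control of the degree sequence (truncated-Poisson-like, with maximum degree $O(\log n/\log\log n)$) and of how many high-degree vertices a connected set of a given size can contain, pins down $e_\partial(A)$. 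Carrying out this multi-scale first-moment estimate is the substance of the argument in \cite[Lemma 34]{amxor}; since our applications use only the statement, I would simply cite it.
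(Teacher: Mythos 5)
The paper itself gives no proof of this lemma: it is simply quoted from \cite[Lemma 34]{amxor}, which is exactly what you ultimately do, so your treatment matches the paper's. Your reduction to $s=1$ and the exploration/first-moment sketch are extra material not present in the paper, and the gap you acknowledge in the union bound is immaterial since, like the paper, you rest the statement on the citation.
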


In the supercritical case, $H_0,H_1,\ldots, H_{\imax}$ is the parallel stripping process with $H_0\in AP_r(n,cn)$, where $c\ge c_{r,k}+n^{-\d}$. Fix an $\eps>0$. By Lemma~\ref{lem:Hb}, there is a constant $B>0$ such that the number of vertices in $H_B$ is at most $(\a+\eps)n$. Assume we have successfully bounded $\sum_{B\le j\le i}|R_j(v)|$ for some $v\in S_i$; then, by Lemma~\ref{lem:neighbours}, a.a.s.\
\be
|R^+(v)|\le \sum_{0\le j\le i}|R_j| =O\left(\log n+ \sum_{B\le j\le i}|R_j|\right).\lab{R(v)}
\ee
Therefore, in order to bound $|R^+(v)|$, it is sufficient to bound $|R_j(v)|$ for every $B\le j\le i$.

Let ${\mathcal E}_i$ denote the set of $r$-tuples incident with $S_i$ in $H_i$; i.e.\ it is the set of hyperedges to be deleted in the $(i+1)$-th iteration. For each $v\in S_{i+1}$, define $d^-(v)$ to be the number points in $v$ that are contained in an $r$-tuple in ${\mathcal E}_i$. In the case $r=2$, $d^-(v)$ is simply the number of neighbours of $v$ in $S_i$ in $H_i$. Naturally, we have $d^-(v)\ge 1$ for every $v\in S_{i+1}$, since otherwise, $v$ would not be deleted in the $(i+1)$-th iteration. Define $D^-(X)=\sum_{v\in X} d^-(X)$. Then, $|R_j(v)|\le D^-(R_j(v))$.
A major part of the work in~\cite[Section 5]{gm} is to recursively bound $D^-(R_j(v))$,
using the random configuration generated by EDGE-SELECTION, and a set of a.a.s.\ properties in~\cite[Lemma 49]{gm}, as follows.

\begin{lemma}\lab{lrec} If all properties in~\cite[Lemma 49(a--i)]{gm} hold then there are constants $B=B(r,k),Z=Z(r,k)>0$ such that for all $j\geq B$: { If $|S_j|\geq n^{\d}\log^2 n$ then: }
\be
D^-(R_j(v))\leq D^-(R_{j+1}(v))+Z\frac{|S_j|}{n}\sum_{\ell=i}^{j+1}D^-(R_{\ell}(v)) +\log^{14}n.\lab{rec}
\ee
\end{lemma}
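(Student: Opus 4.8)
The plan is to follow the proof of the analogous recursion in~\cite[Section 5]{gm}, carried out inside the EDGE-SELECTION probability space conditional on the data revealed by EXPOSURE --- which by~\cite[Lemma 46]{gm} reproduces the correct conditional distribution of the configuration --- while assuming the a.a.s.\ properties (a--i) of~\cite[Lemma 49]{gm}. Fix $v\in S_i$ and a level $B\le j\le i$ with $|S_j|\ge n^{\d}\log^2 n$. By Definition~\ref{def:Rj}, $R_j(v)$ is the $\cals_j$-component-closure of $R'_j(v)$, and $R'_j(v)$ is the set of vertices of $S_j$ lying in an $r$-tuple of $\mathcal E_j$ that also meets $\bigcup_{\ell=j+1}^{i}R_\ell(v)$. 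I would bound $D^-(R_j(v))$ in two stages: first control the ``new layer'' quantity $D^-(R'_j(v))$, then pay for the component-closure that passes from $R'_j(v)$ to $R_j(v)$.

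For the first stage, split the $r$-tuples of $\mathcal E_j$ meeting $\bigcup_{\ell\ge j+1}R_\ell(v)$ according to which layer they touch. Those meeting the most recently revealed layer $R_{j+1}(v)\subseteq S_{j+1}$ are exactly the ``downward'' $r$-tuples of that layer, whose total point-count on $S_{j+1}$ equals $D^-(R_{j+1}(v))$ by definition of $d^-$; bounding the number of distinct such tuples by $D^-(R_{j+1}(v))$ and controlling the $S_j$-endpoints via (a--i), their contribution to $D^-(R'_j(v))$ is $O(D^-(R_{j+1}(v)))$ --- this is why that term appears on the right of~\eqn{rec}. Those meeting the higher layers $R_\ell(v)$ with $j+1<\ell\le i$ are, conditionally on everything exposed, essentially uniformly placed among the $\Theta(n)$ remaining points; since (a--i) guarantee that $H_j$ has $\Omega(n)$ heavy vertices of total degree $\Theta(n)$ and maximum degree $O(\log n)$, and (again using (a--i)) there are $O\big(\sum_{\ell>j}D^-(R_\ell(v))\big)$ relevant point-slots, each lands on $S_j$ with probability $O(|S_j|/n)$. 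A Hoeffding--Azuma argument (Lemma~\ref{l:azuma}) then gives, with failure probability $o(n^{-2})$, a bound of the form $O\big((|S_j|/n)\sum_{\ell=i}^{j+1}D^-(R_\ell(v))\big)$ plus a polylogarithmic additive slack that covers the regime where this expectation is small; it is exactly here that $j\ge B$ and $|S_j|\ge n^{\d}\log^2 n$ enter.

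For the second stage, $R_j(v)$ is the union of the vertex sets of the components of $\cals_j$ that meet $R'_j(v)$. Among properties (a--i) is the subcriticality of $\cals_j$ in the relevant range --- this is where the subcritical SLOW-STRIP analysis of Section~\ref{sec:subdepth} is invoked --- so each component-exploration is dominated by a subcritical branching process, and the $\cals_j$-closure inflates $D^-$ by at most a constant factor plus an additive $O(\log n)$ per component one starts from. Bounding the number of starting components by $|R'_j(v)|$ and absorbing the constant factor into $Z$ yields~\eqn{rec}, the exponent $14$ being chosen large enough that the union bound over the $O(n)$ vertices $v$ and $O(\imax)$ levels $j$ costs nothing.

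The main obstacle is not this bookkeeping --- it is essentially identical to the supercritical argument, the only places where the sign of $c-c_{r,k}$ could matter being the hypothesis ``$c=c_{r,k}+o(1)$'' and the list (a--i) --- but the supporting result: establishing the subcritical analogue of~\cite[Lemma 49(a--i)]{gm} (the structural facts about the $S_j$'s, the bipartite adjacency between consecutive layers, and the subcriticality of $\cals_j$) for $c=c_{r,k}-n^{-\d}$, where SLOW-STRIP terminates with an empty $k$-core rather than converging to a non-empty one. That analysis is the content of Section~\ref{sec:subdepth}; once those properties are in hand, \eqn{rec} follows by the two-stage argument above with only cosmetic changes.
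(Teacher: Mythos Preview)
Your sketch is correct and matches the paper's treatment: the paper does not reprove Lemma~\ref{lrec} at all but simply cites it as the content of~\cite[Section~5]{gm}, and your two-stage outline (first bound $D^-(R'_j)$ via the EDGE-SELECTION randomness and a Hoeffding--Azuma concentration, then inflate by a constant factor for the $\cals_j$-component closure using the subcriticality of $\cals_j$) is exactly that argument. One small clarification: properties (a)--(i) are \emph{hypotheses} of this lemma, so establishing their subcritical analogues is not part of proving~\eqref{rec} but rather the separate task --- carried out in Section~\ref{sec:subdepth} via Lemma~\ref{lsisub} --- needed in order to \emph{apply} the lemma when $c<c_{r,k}$; your final paragraph slightly blurs this distinction.
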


Solving this recurrence, together with~\eqn{R(v)} produces $|R^+(v)|=n^{O(\d)}$ for all non-$k$-core vertices $v$.

\subsection{Sketch of the proof of Theorem~\ref{depth2}}
\lab{sec:subdepth}

  It only remains to prove the upper bound. We may assume that $c=c_{r,k}-n^{-\d}$ for some $\delta<1/2$ since for the case that $\d\ge 1/2$, the upper bound in the theorem is trivial, as discussed in the remark below Theorem~\ref{depth2}.  The approach is the same as in the supercritical case. The same set of a.a.s.\ properties (c.f.~\cite[Lemma 49(a--i)]{gm} for the supercritical case) are exposed in EXPOSURE. Properties in~\cite[Lemma 49(d--i)]{gm} carry to the subcritical case, as the proof of these properties only requires $c=\Theta(1)$. The proof of Lemma~\ref{lrec} depends mainly on these properties, as well as the restriction that $|S_i|/n$ is sufficiently small. Hence,
we will have the same recursive bound as in Lemma~\ref{lrec} for most iterations in the subcritical case. However, $|S_i|$ behaves in a different manner in the subcritical case, which results in a different argument in solving the recurrence~\eqn{rec}. The behaviour of $|S_i|$ in the supercritical case is characterised in Lemma~\ref{lem:Si} (which is~\cite[Lemma 49(a--c)]{gm}). This changes significantly in the subcritical case, as below.

\begin{lemma}\lab{lsisub} There exist positive constants $B,Y_1,Y_2$ {dependent only on $r,k$,} and integers $I_0<I_1<B'$ as growing functions of $n$, such that \aas for every $ B\leq i< B'$:
\begin{enumerate}
\item[(a)] if $i\le I_0$, then $(1-Y_1\sqrt{|S_i|/n})|S_i|\leq |S_{i+1}|\leq  (1-Y_2\sqrt{|S_i|/n})|S_i|$ and $|S_i|=\Omega(n^{1-\d})$;
\item[(b)] if $i\ge I_1$, then $(1+Y_2\sqrt{|S_i|/n})|S_i|\leq |S_{i+1}|\leq  (1+Y_1\sqrt{|S_i|/n})|S_i|$ and $|S_i|=\Omega(n^{1-\d})$;
\item[(c)] $I_1-I_0=O(n^{\d/2})$ and for every $I_1\le i\le I_2$,
 $(1-Y_1 n^{-\d/2})|S_i|\leq |S_{i+1}|\leq  (1+Y_1n^{-\d/2})|S_i|$ and $|S_i|=\Theta(n^{1-\d})$;
\end{enumerate}
\end{lemma}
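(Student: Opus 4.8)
The plan is to transfer the supercritical argument of~\cite[Lemma 49(a--c)]{gm} (restated here as Lemma~\ref{lem:Si}) for the decreasing regime, and to read off the transition and increasing regimes from the SLOW-STRIP analysis already carried out in Section~\ref{sec:a}. Throughout, set $\deltac:=n^{-\delta}$ to match the notation of Section~\ref{sec:Stripsub}, fix a sufficiently small constant $\vs>0$, and couple $H$ with $H'\in AP_r(n,c'n)$, $c'=c_{r,k}+\vs\deltac$, exactly as in Section~\ref{sec:a}; this yields $G_0=\hH_{\tau'(B)}$ with a.a.s.\ $L_0=\Theta(n\deltac)$ and $\theta_0=-\Theta(\sqrt{\deltac})$, and, by the discussion around~\eqref{couple}, a round $i_*\le\tau'(B)+1$ of the parallel process on $H$ with $H_{i_*+j}=\hG_{j+1}$ for every $j\ge 0$, where $\hG_0=G_0$ is run under the parallel process. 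Since light vertices have degree at most $k-1$ and the number of degree-$0$ light vertices created in any one round is $o(n\deltac)$ (bounded exactly as in the proof of Lemma~\ref{lem:L0}), one has $|S_{i_*+j}|=\Theta(L(\hG_{j+1}))=\Theta(L_{t(j+1)})$ by~\eqref{t(i)}, so for $i\ge i_*$ the evolution of $|S_i|$ is dictated by that of $L_t$, already understood in Section~\ref{sec:a}. It then remains to treat the initial rounds $B\le i<i_*$ and to package the $L_t$-estimates into the three regimes, fixing $I_0<I_1<I_2<B'$.

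For the decreasing regime~(a), let $I_0$ be the last round $i\ge B$ at which $|S_i|$ exceeds a suitable constant multiple of $n\deltac$. For $B\le i\le I_0$ the configuration $H_i$ still has $\alpha n+\Omega(\sqrt{\deltac})\,n$ vertices (by Lemma~\ref{lem:Hb} together with near-monotonicity of $|H_i|$) and average heavy-degree lying strictly inside $(k,r(k-1))$ by~\eqref{criticalZetaInequality}; these are precisely the ingredients used to prove the supercritical rate~\cite[Lemma 49(a)]{gm}, whose derivation (differencing~\eqref{Ldiff} over one parallel round and invoking Lemma~\ref{l:azuma}) never appeals to the existence of a $k$-core, only to being above the critical scale. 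Hence a.a.s.\ $(1-Y_1\sqrt{|S_i|/n})|S_i|\le|S_{i+1}|\le(1-Y_2\sqrt{|S_i|/n})|S_i|$ for all $B\le i\le I_0$. Writing $x_i=|S_i|/n$, this recursion decrements by $\Theta(x_i^{3/2})$, so $x_i=\Theta(i^{-2})$ and the critical scale is reached at $i=\Theta(\deltac^{-1/2})=\Theta(n^{\delta/2})$; thus $I_0=\Theta(n^{\delta/2})$, $|S_i|\ge|S_{I_0}|=\Omega(n^{1-\delta})$ throughout~(a), and in particular $i_*=\Theta(n^{\delta/2})$.

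For regimes~(c) and~(b) we quote Section~\ref{sec:a} applied to $(\hG_j)$. Lemma~\ref{lem:D} furnishes an integer $J=O(\deltac^{-1/2})$ with $L_t=\Theta(n\deltac)$ for all $t\le t(J)$ and $\theta_{t(J)}=\Theta(\sqrt{\deltac})$, so $|S_i|=\Theta(n^{1-\delta})$ for $i_*\le i\le i_*+J$; setting $I_1=i_*-1+J$ gives $I_1-I_0=O(n^{\delta/2})$. For $i>I_1$, Lemma~\ref{lem:D2} gives $L_t=\Theta(t^2/n)$ on $t(J)\le t\le\eps n$, while $\theta_t=\Theta(t/n)$ there by Corollaries~\ref{cor:zeta} and~\ref{cor2:hit}; since one parallel round costs $t(i+1)-t(i)=\Theta(L_{t(i)})$ steps and, over such a round, $L$ changes in expectation by $\Theta(L_{t(i)}\,\theta_{t(i)})$, the relation $t(i)=\Theta(\sqrt{n\,L_{t(i)}})$ forces $\theta_{t(i)}=\Theta(\sqrt{|S_i|/n})$, and a concentration step (Lemma~\ref{l:azuma}) turns this into $(1+Y_2\sqrt{|S_i|/n})|S_i|\le|S_{i+1}|\le(1+Y_1\sqrt{|S_i|/n})|S_i|$, i.e.~(b). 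While $|S_i|=\Theta(n^{1-\delta})$ — say for $I_1\le i\le I_2$, with $I_2$ the last such round — this factor is $1+\Theta(n^{-\delta/2})$, which is~(c), and integrating $di/dt\asymp n/t^2$ gives $I_2-I_1=O(n^{\delta/2})$. Finally put $B'$ equal to the last round with $|S_i|\le\sigma n$; Lemma~\ref{lem:I2} (equivalently the same $n/t^2$ count) gives $B'-I_1=O(n^{\delta/2}\log n)$, and beyond $B'$ the process is governed by Lemma~\ref{lem:small}, which is immaterial for the present statement.

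The principal obstacle is regime~(a): since~\cite[Lemma 49(a)]{gm} is stated for configurations that do possess a nonempty $k$-core, one must verify that the sole inputs of its proof — that $\Theta(n)$ heavy vertices remain with average degree bounded strictly between $k$ and $r(k-1)$, and that $|S_i|\ge n\deltac$ — genuinely persist in the subcritical process up to round $I_0$. In particular one must rule out $|S_i|$ falling below the critical scale $n\deltac$ before it turns around; this follows from the coupling (the $O((c'-c)n)=O(\deltac n)$ extra removals are of lower order once $|S_i|\gg\deltac n$) together with the fact, established in Section~\ref{sec:a}, that $L_t=\Theta(n\deltac)$ for all $t\le t(J)$. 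The remaining bookkeeping — the correspondence $i\mapsto t(i)$ between parallel rounds and SLOW-STRIP steps, and the $o(n\deltac)$ bound on the number of degree-$0$ light vertices created per round — is routine and proceeds as in Section~\ref{sec:Stripsub}.
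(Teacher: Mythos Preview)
Your plan is essentially the paper's: both read regimes~(b) and~(c) directly off Lemmas~\ref{lem:D} and~\ref{lem:D2} together with Corollary~\ref{cor2:hit}, deriving the relation $\theta_{t(i)}=\Theta(\sqrt{L_{t(i)}/n})=\Theta(\sqrt{|S_i|/n})$ and then applying Lemma~\ref{l:azuma} across one parallel round. The only substantive difference is in regime~(a). The paper does not patch \cite[Lemma~49(a)]{gm} via the coupling; instead it observes that one may start the SLOW-STRIP analysis (and hence Lemma~\ref{lem:zeta}) directly from $H_B$ rather than from $G_0$, since Lemma~\ref{lem:Hb} already gives $\zeta(H_B)=\zeta+O(\eps')$ and the sole hypothesis driving Lemma~\ref{lem:zeta}(a,b) is $\hzeta_t<r(k-1)-\eps_1$. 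With that starting point, the entire Section~\ref{sec:a} machinery applies with a shift of the time index, and regime~(a) is simply the portion with $\theta_t<0$; the same computation as in Lemma~\ref{lem:D2}, with the sign of $\theta_t$ reversed, gives the decreasing rate. Your coupling route is less clean: the coupled process $(\hH_t)_{t\le\tau'(B)}$ is a \emph{slowed-down} (not the parallel) process on $H$, so it does not align round $i$ of the $H$-process with round $i$ of the $H'$-process for $i<i_*$, and hence does not directly transfer the supercritical rate to those rounds. Separately, your assertion that $H_i$ has $\alpha n+\Omega(\sqrt{\deltac})\,n$ vertices for $B\le i\le I_0$ is not delivered by Lemma~\ref{lem:Hb} (which gives only an upper bound) and is in any case not what the argument needs; the relevant input is that $\zeta_t$ stays in $(k,r(k-1)-\eps_1)$, which follows from Lemma~\ref{lem:zeta} once you start from $H_B$.
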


\begin{proof}[Proof of Lemma~\ref{lsisub}]

The proof of Lemma~\ref{lem:zeta} only requires that $|\hzeta_0-\zeta|<\eps$ for some sufficiently small constant $\eps>0$ (the value of $\eps$ depends on the gap between $\zeta$ and $r(k-1)$ in~\eqn{criticalZetaInequality}). Fix a small constant $\eps'>0$. By Lemma~\ref{lem:Hb}, there is a sufficiently large constant $B=B(r,k,\eps')>0$, such that the number of vertices in $H_B$ is at most $(\a+\eps')n$. By Theorem~\ref{tkim}(b), a.a.s.\ $|\zeta(H_B)-\zeta|=O(\eps')$. Hence, by choosing $\eps'>0$ sufficiently small (and thus $B$ sufficiently large), the evolution of $\theta_t$ is well depicted by Lemma~\ref{lem:zeta}, when SLOW-STRIP is applied to $H_B$. This fixes the constant $B$ in the lemma. Comparing with the analysis in Section~\ref{sec:Stripsub}, now we start our analysis from a configuration, $H_B$, that appears earlier than $G_0$ in SLOW-STRIP applied to $H$. Thus, all results in Section~\ref{sec:Stripsub} hold, except for a shift of the subscript $t$ in all parameters such as $\theta_t$.

Let $I_{\sigma}$ denote the first iteration in the parallel stripping process that the number of vertices becomes at most $\sigma n$.  
By Lemma~\ref{lem:tau3}, there is an integer $B'$ (corresponding to $I_2$ in Lemma~\ref{lem:tau3}), such that $L_{t(B')}\ge \eps n$ for all $t(B')\le t\le t(I_{\sigma}-1)$, provided that $\eps$ was chosen sufficiently small. This fixes the integer $B'$ in the lemma.

By Lemma~\ref{lem:D}, we can specify two iterations $I_0$ and $I_1$ (corresponding respectively to iterations 0 and $I_2$ in Lemma~\ref{lem:D}, due to a shift of the subscript) in the parallel stripping process such that
the value of $\theta(H_i)$ changes from $-\Theta(n^{-\d/2})$ to $\Theta(n^{-\d/2})$. Moreover, Lemma~\ref{lem:D} states that  a.a.s.\ $I_1-I_0=O(n^{\d/2})$, and both $L_{t(I_0)}$ and $L_{t(I_1)}$ are $\Theta(n^{1-\d})$. This fixes the integers $I_0$ and $I_1$ in the lemma and verifies the first claim of part (c).

It only remains to prove that $|S_i|$ changes in a rate described in parts (a,b,c). This part of the proof is similar to the approach in~\cite[Sections 6.2 and 6.3]{gm} ($\theta_t$ was denoted by $br_t$ in~\cite{gm}) so we briefly sketch the idea.

For (b): Consider $t(i)\le t\le t(i+1)$ for some $I_1\le i\le B'$. By Lemma~\ref{lem:D2} and Corollary~\ref{cor2:hit}, we may assume that $L_t=\Theta(t^2/n)$ and $\theta_t=\Theta(t/n)$. Thus, we may assume that $\theta_t=\Theta(\sqrt{L_t/n})$ for all $t$. Hence,
by~\eqn{Ldiff2}
\[
\ex(L_{t+1}-L_t\mid \calf_t)=\Theta(\sqrt{L_t/n}) +O\left(\frac{L_t}{n}\right),
\]
which is $\Theta(\sqrt{L_t/n})$ as long as $L_t/n$ bounded by some sufficiently small constant $\eps_1>0$. Then, by Lemma~\ref{l:azuma} we obtain the desired recursion in part (b), with $|S_i|$ replaced by $L_{t(i)}$, until $L_{t(i)}$ reaches $\eps_1 n$. We may choose $B'$ appropriately to ensure that a.a.s.\ $L_{t(B')}<\eps_1 n$ (e.g.\ choosing $B'$ to be the last step in the parallel stripping process after iteration $I_1$, such that the total degree of the light vertices is less than $\eps_1 n$). It is easy to show that a.a.s.\ $|S_i|=\Theta(|L_{t(i)}|)$ for every $i$ (e.g.\ see~\cite[eq.~(98)]{gm}), and immediately part (b) follows.

For (a): The analysis in Section~\ref{sec:Stripsub} only covers iterations in parts (b) and (c), as we started our analysis from iterations close to $I_0$ by the choice of $G_0$. However, the evolution of $L_{t(i)}$ for $B\le i\le I_0$  is ``symmetric'' to iterations from $I_1$ to $B'$. By Lemma~\ref{lem:zeta}, $\theta_t$ increases with a linear rate during all steps $t(B)\le t\le t(B')$. Then $L_t$ and $\theta_t$ for $t(B)\le t\le t(I_0)$ can be analysed in the same way as in Lemma~\ref{lem:D2}, except that $L_t$ decreases with a certain rate (rather than increases), due to the fact that $\theta_t$ is negative for $t(B)\le t\le t(I_0)$.

For (c):
Between iterations $I_0$ and $I_1$, we have in the proof of Lemma~\ref{lem:D} that $\theta_t=O(n^{-\d/2})$ and $L_t=\Omega(n^{1-\d})$.  Similar to the argument in part (b), part (c) follows by Lemma~\ref{l:azuma}. 

\end{proof}

In the subcritical case, a.a.s.\ the parallel stripping process (and SLOW-STRIP) terminates with an empty graph. Hence, a.a.s.\ every vertex is a non-$k$-core vertex. Let $\imax$ denote the last iteration of the parallel stripping process. By Theorem~\ref{thm:Stripsub}(a), we may assume that $\imax=O(n^{\d/2}\log n)$.

Let $B$ and $B'$ be integers in Lemma~\ref{lsisub}. 
We first prove that a.a.s.\ for every $v\in AP_r(n,cn)$, $|R^+(v)\cap H_{B'}|=O(\log n)$. Define $R_j(v)$ as in Definition~\ref{def:Rj}. We will actually show that $\cup_{j\ge B'} R_j(v)\supseteq R^+(v)\cap H_{B'}$ contains $O(\log n)$ vertices.
 We may assume that $v\in H_{B'}$ since otherwise $\cup_{j\ge B'} R_j(v)=\emptyset$. By Lemma~\ref{lem:tau3}, a.a.s.\ for all $B'\le i\le I_{\sigma}$, the total degree of $H_i$ decreases by $\Omega(n)$ in each iteration of the parallel stripping process. Therefore, a.a.s.\ $I_{\sigma}-B'=O(1)$. By Lemma~\ref{lem:small}, every component in $H_{I_\sigma+2}$ has size $O(\log n)$. It follows then that $|\cup_{j\ge I_{\sigma}+2} R_j(v)|=O(\log n)$. Now by Lemma~\ref{lem:neighbours}, $|\cup_{j\ge B'} R_j(v)|\le C\log n$ for some constant $C>0$, since all vertices in $\cup_{j\ge B'} R_j(v)$ are of distance $O(1)$ from $\cup_{j\ge I_{\sigma}+2} R_j(v)\subseteq H_{I_\sigma+2}$.

This allows us to focus on bounding $R_j(v)$ for $j\le B'$. 
The same proof of Lemma~\ref{lrec} yields the same bound for $D^-(R_j(v))$,  as below.
\be
D^-(R_j(v))\leq D^-(R_{j+1}(v))+Z\frac{|S_j|}{n}\sum_{\ell=i}^{j+1}D^-(R_{\ell}(v)) +\log^{14}n, \quad \mbox{for all}\ B\le j\le B'.\lab{rec2}
\ee
Moreover, we have shown that $|\cup_{j\ge B'}R_{j}(v)|\le C\log n$.


Let $i$ be the integer that $v\in S_i$. Next, we will recursively define $r_j$ such that
\[
D^-(R_{i-j}(v))\le r_j \quad \forall 0\le j\le i-B.
\]
This approach is similar to the work in~\cite[Section 5.5]{gm}. Note that~\eqn{rec2} holds only for $j\le B'$, and thus, we need to specify $r_j$ differently depending on whether $i\le B'$ or $i>B'$. To cope with that, define 
\bea
j_0&=&\max\{0,i-B'\}\\
r_j&=&n^{2\d} \quad \forall 0\le j\le j_0.
\eea
We first confirm that $D^-(R_{i-j}(v))\le r_j$ for all $0\le j\le j_0$. It is easy to show that a.a.s.\ the maximum degree of the original configuration $H\in AP_r(n,cn)$ is $O(\log n)$ (see, e.g.\ the proof of~\cite[Lemma 49(i)]{gm}). If $i\le B'$ then $j_0=0$, and so $D^-(R_i(v))=O(\log n)$, which is less than $r_0$. If $i>B'$, then $|\cup_{j\ge B'}R_{j}(v)|\le C\log n$ and so 
\be
\sum_{j=0}^{j_0}D^-(R_{i-j}(v))=O\left(\log n\sum_{j\ge B'}|R_{j}(v)|\right)=O(\log^2 n)\le r_j.\lab{sumj0}
\ee
Now, we have verified that $D^-(R_{i-j}(v))\le r_j$ for all $0\le j\le j_0$. Define
\bea
r_j&=&r_{j-1}+Z \frac{|S_{i-j}|}{n} \sum_{i=0}^{j-1} r_i+ n^{\d},\quad \forall j_0+1\le j\le i-B. \lab{recr}
\eea
Inductively, we prove that $D^-(R_{i-j}(v))\le r_j$ for all $j_0\le j\le i-B$. We have proved the base case. Assume it holds for $j-1$. Then, for $j$, by~\eqn{rec2} and induction,
\bean
D^-(R_{i-j}(v))&\leq& D^-(R_{i-j+1}(v))+Z\frac{|S_{i-j}|}{n}\sum_{\ell=0}^{j-1}D^-(R_{i-\ell}(v)) +\log^{14}n\\
&\le&r_{j-1}+Z\frac{|S_{i-j}|}{n}\sum_{\ell=j_0}^{j-1}r_{\ell}+Z\frac{|S_{i-j}|}{n}\sum_{\ell=0}^{j_0-1}D^-(R_{i-\ell}(v)) +\log^{14}n\le r_j,
\eean
by noting that $Z\frac{|S_{i-j}|}{n}\sum_{\ell=0}^{j_0-1}D^-(R_{i-\ell}(v)) +\log^{14}n=O(\log^{14}n)\le n^{2\d}$ by~\eqn{sumj0}.

Now, $r_j$ bounds $D^-(R_{i-j})\ge |R_{i-j}|$. It is convenient to define 
\[
t_j=\sum_{j_0\le\ell\le j} r_j.
\]
Then, again by~\eqn{sumj0},
\be
|\cup_{j\ge B} R_{j}(v)|\le \sum_{j=0}^{i-B}D^-(R_{i-j}(v))=\sum_{j=j_0}^{i-B}D^-(R_{i-j}(v))+\sum_{j=0}^{j_0-1}D^-(R_{i-j}(v))\le t_{i-B}+O(\log^2 n).\lab{t}
\ee
Noting that $r_j=t_j-t_{r-1}$,~\eqn{recr} yields the following recurrence for $t_j$:
\be
t_{j}-t_{j-1}= t_{j-1}-t_{j-2}  + Z\frac{|S_{i-j}|}{n}t_{j-1}
+ n^{2\d}, \quad \forall j\ge j_0+1,\lab{rec2}
\ee
where $t_{j_0}=r_{j_0}=n^{2\d}$ and $t_{j_0-1}=0$.
Same as in~\cite{gm}, we can find a sequence $(a_j,b_j)_{j\ge j_0}$ such that $a_{j_0}=b_{j_0}=1$, $b_j\le 1$ and $a_j\le 1+D\sqrt{|S_{i-j}|/n}$ for some constant $D>0$ such that
\be
t_j-a_jt_{j-1}=b_j(t_{j-1}-a_{j-1}t_{j-2})+n^{2\d},\quad \forall j\ge j_0+1.\lab{ezzzz}
\ee
See~\cite[eq.~(64) and (65)]{gm} for the detailed construction of the sequence $(a_j,b_j)$.

Let $c_j=t_j-a_jt_{j-1}$.
Then~\eqn{rec2} becomes
\[
c_{j}=b_j c_{j-1}+n^{2\d} \le c_{j-1}+n^{2\d}\le c_{0} +j n^{2\d}=r_0+j n^{2\d}.
\]
Since $r_0=n^{2\d}$ and $j\le \imax=O(n^{\d/2}\log n)$,
we have
\be
t_j-a_jt_{j-1}\le U:= n^{3\d}, \quad \forall j\ge j_0+1.\lab{rec3}
\ee
So far we have deduced a recurrence for $t_j$, which is the same as in~\cite[eq.~(66)]{gm}. The bound on $t_j$ depends on the sequence $(a_j)$, which is a function of $|S_j|/n$. Since the evolution of $|S_j|$ is different in the subcritical case (comparing Lemma~\ref{lsisub} with Lemma~\ref{lem:Si}), the analysis is a little different. Our eventual goal is to bound $t_{i-B}$ by $n^{O(\d)}$ by recursively bounding each $t_j$, $j\le i-B$. We will break the analysis into three different stages and bound $t_{i-I_1}$, $t_{i-I_0}$ and $t_{i-B}$ in turn, where $I_0$ and $I_1$ are the integers stated in Lemma~\ref{lsisub}.\smallskip

{\em Stage 1: bounding $t_{i-I_1}$}. We may assume that $i>I_1$; otherwise we may skip this stage.
Recursively applying~\eqn{rec3} for all $j_0+1\le j<i-I_1$, we have
\be
t_{i-I_1}\le U\left(1+\sum_{h=j_0+2}^{i-I_1}\prod_{j=h}^{i-I_1}a_j\right)+ t_{j_0}\prod_{j=j_0+1}^{i-I_1}a_j.\lab{rec5}
\ee
Our goal is to bound $t_{i-I_1}$ by $n^{O(\delta)}$.
We first show that $\prod_{j=j_0+1}^{i-I_1}a_j=n^{O(\d)}$ (this part of the analysis is similar to the work in~\cite{gm}; see~\cite[eq.~(69)--(70)]{gm}).

Since $a_j\le 1+D\sqrt{|S_{i-j}|/n}$ for each $j$, we have
\be
\prod_{j=j_0+1}^{i-I_1}a_j\le \exp\left(D\sum_{j=j_0+1}^{i-I_1}\sqrt{\frac{|S_{i-j}|}{n}}\right)=\exp\left(D\sum_{j=I_1}^{i-j_0-1}\sqrt{\frac{|S_{j}|}{n}}\right)\le \exp\left(D\sum_{j=I_1}^{B'-1}\sqrt{\frac{|S_{j}|}{n}}\right),\lab{rec4}
\ee
where the last inequality above holds by the definition of $j_0$. By Lemma~\ref{lsisub}(b),
\[
|S_{B'}|\ge |S_{I_1}|\prod_{j=I_1}^{B'-1}\left(1+Y_2\sqrt{\frac{|S_j|}{n}}\right)\ge |S_{I_1}|\exp\left(Y_3\sum_{j=I_1}^{B'-1}\sqrt{\frac{|S_j|}{n}}\right),
\]
for some constant $Y_3>0$, as $|S_j|/n$ is small for every $j$ in this range. Again, by Lemma~\ref{lsisub}(b), $|S_{I_1}|\ge C_1 n^{1-\d}$ for some constant $C_1>0$. This implies that, using $|S_{B'}|\le n$,
\[
\exp\left(Y_3\sum_{j=I_1}^{B'-1}\sqrt{\frac{|S_j|}{n}}\right)\le \frac{|S_{B'}|}{|S_{I_1}|}\le \frac{n^{\d}}{C_1}.
\]
Substituting this into~\eqn{rec4}, we have
\[
\prod_{j=j_0+1}^{i-I_1}a_j\le \left(\frac{n^{\d}}{C_1}\right)^{D/Y_3}=n^{O(\d)}.
\]
Now~\eqn{rec5} gives
\be
t_{i-I_1}=O\left(U \imax\prod_{j=j_0+1}^{i-I_1}a_j\right) +t_{j_0} \prod_{j=j_0+1}^{i-I_1}a_j = n^{O(\d)}. \lab{stage1}
\ee
\smallskip

{\em Stage 2: bounding $t_{i-I_0}$}. Using the same recursion~\eqn{rec3} for $j$ such that $I_0\le i-j< I_1$,
we have
\be
t_{i-I_0}\le U\left(1+\sum_{h=i-I_1+2}^{i-I_0}\prod_{j=h}^{i-I_0}a_j\right)+ t_{i-I_1}\prod_{j=i-I_1+1}^{i-I_0}a_j\le (U\imax+t_{i-I_1}) \prod_{j=i-I_1+1}^{i-I_0}a_j.\lab{rec55}
\ee
Same as before, we have
\[
\prod_{j=i-I_1+1}^{i-I_0}a_j\le \exp\left(D\sum_{j=i-I_1+1}^{i-I_0}\sqrt{\frac{|S_{i-j}|}{n}}\right)=\exp\left(D\sum_{j=I_0}^{I_1-1}\sqrt{\frac{|S_{j}|}{n}}\right).
\]
By Lemma~\ref{lsisub}(c), there are constants $C_2,C_3>0$ such that
$|S_j|/n\le C_2n^{-\d}$ for all $I_0\le j\le I_1-1$ and $I_1-I_0\le C_3 n^{\d/2}$. This implies that
\[
\exp\left(D\sum_{j=I_0}^{I_1-1}\sqrt{\frac{|S_{j}|}{n}}\right)\le \exp(D C_3 \sqrt{C_2}),
\]
and thus, $\prod_{j=i-I_1+1}^{i-I_0}a_j=O(1)$.
This together with~\eqn{rec55} and~\eqn{stage1} implies
\be
t_{i-I_0}=n^{O(\d)}.\lab{stage2}
\ee

{\em Stage 3: bounding $t_{i-B}$}. Using~\eqn{rec3} for $j$ such that $B\le i-j< I_0$, we have
\be
t_{i-B}\le U\left(1+\sum_{h=i-I_0+2}^{i-B}\prod_{j=h}^{i-B}a_j\right)+ t_{i-I_0}\prod_{j=i-I_0+1}^{i-B}a_j\le (U\imax+t_{i-I_0})\prod_{j=i-I_0+1}^{i-B}a_j.\lab{rec6}
\ee
By Lemma~\ref{lsisub}(a),
\[
|S_{I_0}|\le |S_{B}|\prod_{j=B}^{I_0-1}\left(1-Y_2\sqrt{\frac{|S_j|}{n}}\right)\le |S_{B}|\exp\left(-Y_2\sum_{j=B}^{I_0-1}\sqrt{\frac{|S_j|}{n}}\right).
\]
 By Lemma~\ref{lsisub}(a), $|S_{I_0}|\ge C_4 n^{1-\d}$ for some constant $C_4>0$. This implies that, using $|S_B|\le n$,
\[
\exp\left(Y_2\sum_{j=B}^{I_0-1}\sqrt{\frac{|S_j|}{n}}\right)\le \frac{|S_B|}{|S_{I_0}|}\le \frac{n^{\d}}{C_4}.
\]
This gives
\[
\prod_{j=i-I_0+1}^{i-B}a_j\le \exp\left(D\sum_{j=B}^{I_0-1}\sqrt{\frac{|S_{j}|}{n}}\right) \le \left(\frac{n^{\d}}{C_4}\right)^{D/Y_2}=n^{O(\d)}.
\]
This together with~\eqn{stage2} and~\eqn{rec6} yields
\be
t_{i-B}=n^{O(\d)}. \lab{stage3}
\ee

Now, we have shown, by~\eqn{t}, that $\sum_{j\ge B}|R_j(v)|\le t_{i-B}+O(\log^2 n)=n^{O(\d)}$. It follows immediately that $\sum_{j\ge 0}|R_j(v)|=n^{O(\d)}$ by Lemma~\ref{lem:neighbours}. Hence, noting that $|R^+(v)|\le \cup_{j\ge 0} |R_j|$, we have shown that a.a.s.\ $|R^+(v)|=n^{O(\d)}$ for every $v\in AP_r(n,cn)$. This proves  Theorem~\ref{depth2} and therefore Theorem~\ref{thm:depthsub}, by Corollary~\ref{ccon}. \qed

\end{document}